\documentclass[preprint,11pt]{elsarticle}
\usepackage[T1]{fontenc}
\usepackage[margin=1in]{geometry}
\usepackage{booktabs,array,tabularx}
\usepackage{listings}
\usepackage{xcolor}
\usepackage{seqsplit}
\usepackage{amsmath,amssymb,amsthm,mathtools,mathrsfs}
\usepackage{hyperref}
\hypersetup{colorlinks=true,linkcolor=blue!55!black,urlcolor=blue!55!black}
\allowdisplaybreaks[4]

\newtheorem{theorem}{Theorem}[section]
\newtheorem{lemma}[theorem]{Lemma}

\newtheorem{proposition}[theorem]{Proposition}

\theoremstyle{remark}

\theoremstyle{definition}

\numberwithin{equation}{section}

\DeclareMathOperator{\Log}{Log}
\DeclareMathOperator{\Arg}{Arg}
\begin{document}
\begin{frontmatter}
\title{Infinite log-concavity of the Taylor coefficients of the Riemann xi-function}

\author[]{Yanxin Liu}
\ead{liu-yanxin@outlook.com}

\author[]{Jianxi Mao\corref{cor1}}
\ead{maojx@dlut.edu.cn}
\cortext[cor1]{Corresponding author}

\address{School of Mathematical Sciences, Dalian University of Technology, Dalian 116024,\\ P. R. China}

\begin{abstract}
Let $\xi(s)$ denote the Riemann xi-function, and write
\begin{equation*}
F(x)=\xi\!\left(\frac12+\sqrt{x}\right)
=\sum_{n=0}^{\infty}\lambda_n x^n,
\qquad
\lambda_n=\frac{\xi^{(2n)}(1/2)}{(2n)!}.
\end{equation*}
The Riemann hypothesis is equivalent to $F(x)$ belonging to the Laguerre--P\'olya class.
Br\"and\'en [J. Reine Angew. Math., 2011] proved that if
an entire function in the Laguerre--P\'olya class has nonnegative Taylor coefficients, then its coefficient sequence is infinitely log-concave.
Consequently, the Riemann hypothesis implies the
infinite log-concavity of $(\lambda_n)_{n\ge0}$.
In this paper, we prove that the sequence $(\lambda_n)_{n\ge0}$ is strictly
infinitely log-concave.
This resolves a conjecture of Zhu [Math. Z., 2023].
The proof combines explicit complex-analytic estimates
for the iterated logarithmic ratios, rigorous interval arithmetic
for a finite range of indices, and a global closure argument.
\end{abstract}
\begin{keyword}
Riemann xi-function \sep Infinite log-concavity \sep Tur\'an inequalities 
\MSC[2020] 11M26\sep 30D15 \sep 05A20
\end{keyword} 
\end{frontmatter}

\tableofcontents

%=================================================
\section{Introduction}
%=================================================
Let
\begin{equation*}
\xi(s)=\frac12 s(s-1)\pi^{-s/2}\Gamma(s/2)\zeta(s)
\end{equation*}
denote the Riemann xi-function, where $\zeta(s)$ is the Riemann zeta function.
Its functional equation $\xi(s)=\xi(1-s)$ implies that
its Taylor expansion about $s=1/2$ contains only even powers.
Write
\begin{equation}\label{eq:F(x)}
F(x)=\xi\!\left(\frac12+\sqrt{x}\right)
=\sum_{n=0}^{\infty}\lambda_n x^n
=\sum_{n=0}^{\infty}\gamma_n\frac{x^n}{n!},
\qquad
\lambda_n=\frac{\xi^{(2n)}(1/2)}{(2n)!}.
\end{equation}
Thus $(\lambda_n)_{n\ge0}$ is the sequence of even Taylor
coefficients of $\xi$ at its central point.

A real entire function $\psi(x)$
is said to be in the \emph{Laguerre-P\'olya class}, denoted by
$\psi(x)\in\mathcal{LP}$, if it can be represented in the form
\begin{equation}\label{eq:product}
\psi(x)=cx^m e^{-\alpha x^2+\beta x}
\prod_{n=1}^{\infty}(1+x/x_n)\,e^{-x/x_n},
\end{equation}
where $c,\beta,x_n$ are real numbers, $\alpha\geq0$,
$m$ is a nonnegative integer and $\sum x_n^{-2}<\infty$.
We refer the reader to~\cite{Rs02} for further background on the theory of the $\mathcal{LP}$ class.

P\'olya  proved that the Riemann hypothesis holds if and only if
the function $F(x)$ belongs to the $\mathcal{LP}$ class with positive coefficients,
i.e., $F(x)$ has only negative real zeros~\cite{SP14}.
The Riemann hypothesis has been extensively studied from the perspective that the function  $F(x)$ belongs to the $\mathcal{LP}$ class; see, for example, \cite{CC89, CVV90, Osu21, WY24}.
The famous P\'olya-Schur criterion~\cite{SP14} states that $F(x)$ belongs to the $\mathcal{LP}$ class if and only if the Jensen polynomials associated with $\{\gamma_n\}$ have only real zeros.
In 2019, Griffin, Ono, Rolen and Zagier~\cite{GORZ19} proved that the Jensen polynomials associated with $\{\gamma_n\}$ have only real zeros for each fixed degree and all sufficiently large shifts.
In 2022, Griffin, Ono, Rolen, Thorner, Tripp and Wagner~\cite{GORTTW22}
made this approach effective and obtained an effective bound for the threshold.

Another closely related direction concerns log-concavity for the Taylor coefficients of the Riemann xi-function. 
Let $\alpha=(a_n)_{n\ge0}$ be a sequence of real numbers with $a_{-1}=0$. 
We say that the sequence $\alpha$ is~\emph{log-concave} (or satisfies the \emph{Tur\'an inequalities}) if $a_n^2\ge a_{n-1}a_{n+1}$ for all $n\ge0$.
It was P\'olya~\cite{Pol27} who first asked whether the sequence $\{\gamma_n\}$ is log-concave. Note that the log-concavity of $\{\gamma_n\}$ implies that of $\{\lambda_n\}$.
Csordas, Norfolk, and Varga~\cite{CNV86} settled this problem.
Dimitrov and Lucas~\cite{DL11} subsequently established higher-order Tur\'an inequalities for the Riemann xi-function, which, in particular, yield the hyperbolicity of the associated Jensen polynomials of degree three.

For a sequence $\alpha=(a_n)_{n\ge0}$ of real numbers,  define the sequence $\mathcal L\alpha$ by
\begin{equation*}
(\mathcal L\alpha)_0=a_0^2,\quad
(\mathcal L\alpha)_n=a_n^2-a_{n-1}a_{n+1}\quad \text{for }n\ge1.
\end{equation*}
Clearly, the sequence $\alpha$ is log-concave if and only if the sequence $\mathcal L\alpha$ is a nonnegative sequence.
Define $\mathcal L^j\alpha=\mathcal L(\mathcal L^{j-1}\alpha)$.
We say the sequence $\alpha$ is \emph{$r$-log-concave} if $\mathcal L^j \alpha$ is nonnegative for all $1\le j\le r$, and $\alpha$ is \emph{infinitely log-concave} if $\mathcal L^j \alpha$ is nonnegative for every integer $j\ge1$.

Boros and Moll~\cite{BM04} introduced the notions of $r$-log-concavity and infinite log-concavity. 
They conjectured that the sequence of binomial coefficients,
$\left\{\binom{n}{k}\right\}_{k}$, is infinitely log-concave for each $n\in\mathbb{N}$.
If the polynomial $\sum_{n=0}^{d}a_n z^n$ has only real
and non-positive zeros, then Newton's inequality implies that the sequence
$\{a_n\}$ is log-concave.
Motivated by this fact and Boros and Moll's conjecture on binomial coefficients,
McNamara and Sagan~\cite{MS10} conjectured that the coefficient sequence of a polynomial with only negative real zeros is infinitely log-concave.
Independently, Fisk and Stanley formulated the same conjecture.
Note that a polynomial with only nonpositive real zeros belongs to the Laguerre-P\'olya class $\mathcal{LP}$.
Br\"and\'en~\cite{Bra11} proved this conjecture by showing that if an entire function of nonnegative Taylor coefficients belongs to the Laguerre-P\'olya class, 
then its coefficient sequence is infinitely log-concave.
Consequently, if the Riemann hypothesis holds, then the sequence $(\lambda_n)_{n\ge0}$ in~\eqref{eq:F(x)} is infinitely log-concave.
Recently, Zhu~\cite{Zhu23} proved that
$(\lambda_n)_{n\ge0}$ is $6$-log-concave and conjectured that it is infinitely log-concave.
In this paper, we give an affirmative answer to this conjecture, without assuming the Riemann hypothesis.
\begin{theorem}\label{thm:main}
Let
\begin{equation*}
F(x)=\xi(1/2+\sqrt{x})
=\sum_{n=0}^{\infty}\lambda_nx^n.
\end{equation*}
Then \begin{equation*}
(\mathcal L^m\lambda)_n>0
\quad
\text{for all }m\ge 0,n\ge0.
\end{equation*}
In particular, the sequence $(\lambda_n)_{n\ge0}$ is infinitely log-concave.
\end{theorem}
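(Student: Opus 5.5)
Since the $\lambda_n$ are not known to come from a Laguerre--P\'olya function, we cannot invoke Br\"and\'en's theorem and must work directly with the coefficients. Two inputs will drive the argument: an asymptotic description of $\lambda_n$, and a robustness mechanism that propagates positivity through every iterate of $\mathcal L$.

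\textbf{Step 1: a log-concavity margin that survives iteration.} Write $\lambda_n=\exp(-f(n))$ with $f$ smooth. The model case is a sequence whose logarithmic second differences $\Delta^2 \log a_n$ are bounded above by $-c$ with $c>\log 4$. In that case $1-a_{n-1}a_{n+1}/a_n^2\ge 1-e^{-c}>3/4$, and this margin is preserved by $\mathcal L$. This is the classical sufficient criterion of McNamara--Sagan: if $a_{n-1}a_{n+1}\le r\,a_n^2$ with $r\le (3-\sqrt5)/2$, then $\mathcal L a$ satisfies the same bound. Iterating then gives infinite log-concavity at once.

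For $\xi$, however, the ratios $\lambda_{n-1}\lambda_{n+1}/\lambda_n^2$ tend to $1$, so the ratio is not uniformly small. The actual behaviour is $-\Delta^2\log\lambda_n\asymp 1/(n\log n)$, which follows from the saddle-point asymptotics of Griffin--Ono--Rolen--Zagier. The McNamara--Sagan criterion is therefore unavailable, and we need a finer tool.

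\textbf{Step 2: an iterated-ratio description.} Define $(\mathcal L^m\lambda)_n=\exp(-f_m(n))$. Heuristically, $f_{m+1}(n)\approx 2f_m(n)-\log\bigl(1-e^{-\Delta^2 f_m(n)}\bigr)$, so the correction term $g_m(n)=-\log(1-e^{-\Delta^2f_m})\approx -\log\Delta^2 f_m$ grows only logarithmically. After normalising by $2^m$, set $h_m=f_m/2^m$. We aim to prove by induction on $m$ that $h_m$ stays uniformly convex along $n$, with explicit constants, for all $n\ge N_0$. To do this, represent $\lambda_n$ through the Mellin/saddle-point integral
\[
\lambda_n=\frac{1}{2\pi i}\oint F(x)x^{-n-1}\,dx ,
\]
giving $f(n)=n\log(n/\log n)+O(n)$ with controlled complex-analytic error terms in a strip. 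Working with $f$ as an analytic function of $n$ lets us bound $\Delta^2$ and the higher differences by derivatives via Cauchy estimates on disks. The key inductive statement is that $\Delta^2 f_m(n)\ge 2^m\,\delta(n)$ with $\delta(n)\sim c/(n\log n)$, together with matching upper bounds on $\Delta^4 f_m$. These bounds close under the recursion because the perturbation $\Delta^2 g_m$ is $O(1/n^2)$, which is negligible against $2^m/(n\log n)$ uniformly in $m$.

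\textbf{Step 3: finite range and small $m$.} For $n<N_0$, and for the boundary index $n=0$ where $(\mathcal L\alpha)_0=a_0^2$, the tail estimate does not apply. Here we certify positivity of $(\mathcal L^m\lambda)_n$ by interval arithmetic on the rigorously computed $\lambda_n$. The number of iterates cannot be finite, so we also need a global closure: once the ratios $a_{n-1}a_{n+1}/a_n^2$ are all below $(3-\sqrt5)/2$ on the initial segment $n<N_0$, they stay below it there. We must show this ratio bound holds for some fixed $m_0$ on the initial segment. Small $n$ should have large margin because $\lambda_n$ decays superexponentially, so the ratio shrinks further under iteration. For $m\ge m_0$, we then combine the McNamara--Sagan invariance on $n<N_0$ with the Step 2 convexity for $n\ge N_0$ across the junction. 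This requires checking that the window $\{N_0-1,N_0,N_0+1\}$ is handled by both regimes simultaneously.

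\textbf{Main obstacle.} The hardest part is making Step 2 uniform in $m$ with explicit constants. The errors from the Cauchy estimates must not accumulate under doubling, and the junction at $N_0$ must be consistent for all $m$ at once. I would first try to prove a self-similar inequality of the form $\Delta^2 f_m(n)\ge 2^m\bigl(\delta(n)-\varepsilon(n)\bigr)$, with $\varepsilon$ summable and independent of $m$. I would pick $N_0$ large enough (perhaps $10^3$ or more) that the interval certification remains feasible.
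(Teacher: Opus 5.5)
You have the right ingredients (an analytic interpolation of $\lambda_n$, Cauchy estimates on large disks, interval arithmetic for small $n$, and the McNamara--Sagan invariance). But the step you flag as the main obstacle is a genuine gap, and as you set it up it cannot be closed. Step~2 asks for $\Delta^2 f_m(n)\ge 2^m\delta(n)$ for all $n\ge N_0$ and \emph{all} $m$, by induction on $m$.

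\textbf{Why Step~2 cannot be closed as stated.} Discrete bounds on $\Delta^2 f_m$ and $\Delta^4 f_m$ do not close: propagating $\Delta^4$ needs $\Delta^6$, and so on. The analytic substitute degrades at every step. The recurrence uses the values at $z\pm1$, so the domain of the interpolant shrinks by at least $1$ per iteration, and in practice by much more to keep the Cauchy errors summable. Hence at a fixed $n$ the induction can only be run finitely often. An induction over the whole half-line $n\ge N_0$ also couples level $m+1$ at $N_0$ to level $m$ at $N_0-1$, so the junction is a real obstruction rather than bookkeeping. Finally, $\Delta^2 f_m(n)=\log r_{m,n}\approx 2^{m+1}/n$ stays below $\log\frac{3+\sqrt5}{2}$ until $m\approx\log_2 n$. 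So the invariance cannot take over at any level independent of $n$.

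\textbf{The missing idea.} What is needed is an $n$-dependent escape time, together with a closure step that only needs positivity at the neighbours. For each fixed $n\ge12000$ the paper runs the recurrence $g_{m+1}=2g_m-\Delta^2H_m$. Here $g_m$ is your $\Delta^2 f_m$ and $H_m=\Log\bigl((1-e^{-g_m})/(2^m/n)\bigr)$. The recurrence is run on the disks $|z-n|<2^{-m/3-1}n$ centred at that $n$, and only for $m\le\tau(n)=\lceil\log_2 n\rceil$. The normalised errors $3\cdot2^{-\ell}\delta_\ell^{-2}$ sum to less than $1500/n^2$, which gives $g_{\tau(n)}(n)\ge 9/8$. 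The saddle-point control of $g_0$ holds on the whole disk (Lemma~\ref{lem:explicit_analytic_initial_ratio}), so no input from the finite computation is needed there. For $n<12000$, interval arithmetic gives $\tau(n)=14$. Lemma~\ref{lem:variable_escape_global_closure} then glues everything by a simultaneous induction on $m$ over all $n$. It uses $r_{m+1,n}\ge(r_{m,n}-1)^2$, which requires only $0<A_{m+1,k}\le A_{m,k}^2$ at $k=n\pm1$, not ratio bounds there, so the junction problem disappears.

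\textbf{Analytic inputs that also need repair.}
\begin{enumerate}
\item $\oint F(x)x^{-n-1}\,dx$ does not define an analytic function of $n$. The paper instead uses $\lambda_n=H(n)/\Gamma(2n+1)$, where $H$ is the positive moment integral.
\item An $O(n)$ error in $f$ is too weak. By Cauchy on a disk of radius comparable to $n$ it becomes an $O(1/n)$ error in $\Delta^2 f$, the same size as the main term. A bounded error, such as $|\Log(H/H_*)|<0.051$, is needed.
\item $-\Delta^2\log\lambda_n\sim 2/n$, not $1/(n\log n)$. The $1/(n\log n)$ term comes from the log-convex factor $\xi^{(2n)}(1/2)$ and enters with the opposite sign; the dominant $2/n$ comes from $1/(2n)!$.
\end{enumerate}
For the same reason, the margin at small $n$ does not come from superexponential decay. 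About $\log_2 N_0$ iterations are needed there; the paper's worst case is $r_{14,12000}\approx10.9$.
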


To prove Theorem~\ref{thm:main}, we use analytic estimates for infinitely many cases, and then we verify the finitely many remaining cases directly.
The main difficulty is to control two indices simultaneously:
the coefficient index and the iteration order.
Motivated by McNamara and Sagan~\cite{MS10},
we overcome this difficulty by showing that, for each coefficient
index $n\ge1$, there exists a finite iteration order $\tau(n)$
such that
\begin{equation*}
\bigl((\mathcal L^{\tau(n)}\lambda)_n\bigr)^2
\ge \frac{3+\sqrt5}{2}\,
(\mathcal L^{\tau(n)}\lambda)_{n-1}
(\mathcal L^{\tau(n)}\lambda)_{n+1}.
\end{equation*}
We obtain the required bounds by estimating the
logarithmic ratios step by step on successively smaller disks.
Combining these estimates with finite verification and a global closure argument, we prove $(\lambda_n)_{n\ge0}$ is
infinitely log-concave.

The paper is organized as follows. 
In Section~\ref{sec:Ratio_moment}, we introduce the logarithmic ratios $g_{m,n}$ of a sequence $\alpha$ associated with $\mathcal L$. 
A positive sequence $\alpha$ is strictly infinitely log-concave if and only if $g_{m,n}>0$ for all $m\ge 0,n\ge 1$.
We present the recurrence of the logarithmic ratios $g_{m,n}$.
Then we introduce a positive moment representation for the coefficients $(\lambda_n)_{n\ge0}$. 
In Section~\ref{sec:analytic}, we prove that  $g_{m,n}\ge 0$
for $n\ge 12000$ and $m\le \lceil\log_2 n\rceil$.
Using a saddle point estimate,
we provide explicit estimates for the initial logarithmic ratios $g_{0,n}$ and bound their iterates $g_{m,n}$ on successively smaller disks. 
In Section~\ref{sec:finite}, we prove  that  $g_{m,n}\ge 0$ for $1\le n < 12000$ and $m\le 14$ by a computer-assisted verification.
In Section~\ref{sec:closure}, we prove Theorem~\ref{thm:main}. 
We use the results that for any fixed $n\ge 12000$, $g_{\lceil\log_2 n\rceil,n}>1$,
and  for any fixed $n\le 12000$, $g_{14,n}\ge 1$,
which have been proved in Sections~\ref{sec:analytic} and~\ref{sec:finite}, respectively.
These results imply the iterated log-concavity for $n\ge 12000, m\ge \lceil\log_2 n\rceil$,
and  for $1\le n\le 12000, m\ge 14$.
Together with the results in Sections~\ref{sec:analytic} and~\ref{sec:finite},
we complete the proof.
Finally, \ref{app:source} provides the implementation details and source code to ensure complete reproducibility of the computer-assisted verification in Section~\ref{sec:finite}.

%=================================================
\section{Preliminaries}
\label{sec:Ratio_moment}
%=================================================
In this section, we introduce logarithmic ratios $g_{m,n}$ associated with $\mathcal L$, and establish the basic tools needed for the later analysis.
For a function $f(z)$, the second difference is defined by
$\Delta^2 f(z):=f(z-1)-2f(z)+f(z+1)$.

For $m\ge0$,
let $\alpha=(a_n)_{n\ge0}$ be a sequence and 
$A_{m,n}=(\mathcal L^m\alpha)_n$, where $A_{0,n}=a_n$.
By definition,
\begin{equation}\label{eq:receAmk}
A_{m+1,n}=A_{m,n}^2-A_{m,n-1}A_{m,n+1}
\end{equation}
with $A_{m,0}=(a_0)^{2^m}$. 
For $n\ge1$, whenever
$A_{m,n-1},\, A_{m,n},\, A_{m,n+1}>0$, we set
\begin{equation}\label{eq:rmk&gmk}
r_{m,n}
:=
\frac{A_{m,n}^2}
{A_{m,n-1}A_{m,n+1}},
\quad
g_{m,n}:=\log r_{m,n}.
\end{equation}
A direct computation gives
\begin{equation}\label{A-g}
A_{m+1,n}
=
A_{m,n}^2-A_{m,n-1}A_{m,n+1}
=
A_{m,n}^2\bigl(1-e^{-g_{m,n}}\bigr).
\end{equation}
Now we give the recurrence for $g_{m,n}$.

\begin{lemma}\label{lem:ratio_identity_and_persistence}
Let $m\ge 0$ and $n\ge2$. 
In the notation above, suppose that
$g_{m,n-1},g_{m,n},g_{m,n+1}$ are well defined and strictly positive. 
Then
$g_{m+1,n}$ is well defined and
\begin{equation}\label{eq:lemma 2.1-1}
g_{m+1,n}
=
2g_{m,n}-\Delta^2 h(g_{m,n})=2g_{m,n}-h(g_{m,n-1})+2h(g_{m,n})-h(g_{m,n+1}),
\end{equation}
where
$h(t)=\log(1-e^{-t})$.
\end{lemma}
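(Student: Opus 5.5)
The plan is to prove this by direct algebraic substitution, using identity \eqref{A-g} three times. There are two things to show: that $g_{m+1,n}$ is well defined, and that the recurrence holds.

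\textbf{Well-definedness.} The hypothesis that $g_{m,n-1},g_{m,n},g_{m,n+1}$ are well defined means, by \eqref{eq:rmk&gmk}, that $A_{m,n-2},A_{m,n-1},A_{m,n},A_{m,n+1},A_{m,n+2}$ are all strictly positive. Here $n\ge2$ guarantees that $n-2\ge0$ and that $g_{m,n-1}$ uses only indices $\ge1$. Strict positivity of each $g_{m,j}$ for $j\in\{n-1,n,n+1\}$ gives $1-e^{-g_{m,j}}>0$. Then \eqref{A-g} yields
\[
A_{m+1,j}=A_{m,j}^2\bigl(1-e^{-g_{m,j}}\bigr)>0
\]
for $j=n-1,n,n+1$. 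Hence $r_{m+1,n}$ and $g_{m+1,n}$ are well defined.

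\textbf{The recurrence.} Take logarithms of \eqref{A-g}, writing $h(t)=\log(1-e^{-t})$:
\[
\log A_{m+1,j}=2\log A_{m,j}+h(g_{m,j}),\qquad j=n-1,n,n+1.
\]
By definition,
\[
g_{m+1,n}=2\log A_{m+1,n}-\log A_{m+1,n-1}-\log A_{m+1,n+1}.
\]
Substituting the three expressions gives
\[
g_{m+1,n}=2\bigl(2\log A_{m,n}-\log A_{m,n-1}-\log A_{m,n+1}\bigr)+2h(g_{m,n})-h(g_{m,n-1})-h(g_{m,n+1}).
\]
The parenthesized term equals $g_{m,n}$, so this is \eqref{eq:lemma 2.1-1}. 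The notation $\Delta^2 h(g_{m,n})$ is to be read as the second difference in the index $n$ of the sequence $n\mapsto h(g_{m,n})$.

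There is no real obstacle. The only care needed is the index bookkeeping: the assumption $n\ge2$ ensures that $A_{m,n-2}$ exists and that all the logarithms involved are of positive quantities.
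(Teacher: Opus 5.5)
Your proof is correct and follows the paper's argument. Take logarithms of \eqref{A-g} to get $\log A_{m+1,j}=2\log A_{m,j}+h(g_{m,j})$, then take the centered second difference in the index; the paper phrases this as applying $-\Delta^2$ via $g_{m,n}=-\Delta^2\log A_{m,n}$, and your explicit check that $A_{m+1,n-1},A_{m+1,n},A_{m+1,n+1}>0$ spells out the well-definedness the paper only asserts in passing.
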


\begin{proof}
Recall that by~\eqref{A-g}
\begin{equation*}
A_{m+1,n}
=
A_{m,n}^2-A_{m,n-1}A_{m,n+1}
=
A_{m,n}^2\bigl(1-e^{-g_{m,n}}\bigr).
\end{equation*}
Since all the entries involved are positive, we can obtain that
\begin{equation}\label{log-h}
\log A_{m+1,n}
=
2\log A_{m,n}+h(g_{m,n}).
\end{equation}
On the other hand,
\begin{equation*}
g_{m,n}
=
\log\frac{A_{m,n}^2}{A_{m,n-1}A_{m,n+1}}
=
2\log A_{m,n}
-\log A_{m,n-1}
-\log A_{m,n+1}.
\end{equation*}
Hence $g_{m,n}=-\Delta^2\log A_{m,n}$.
Then applying $-\Delta^2$ to both sides of~\eqref{log-h} gives
\begin{equation*}
g_{m+1,n}=
2g_{m,n}-\Delta^2 h(g_{m,n}).
\end{equation*}
This proves~\eqref{eq:lemma 2.1-1}, and completes the proof.
\end{proof}

The following lemma connects the auxiliary functions $g_m(x)$ with $g_{m,n}$ defined in~\eqref{eq:rmk&gmk}. It shows that if these functions $g_m(x)$ satisfy some required conditions, then their values at integer points coincide with the discrete logarithmic ratios $g_{m,n}$.

\begin{lemma}\label{lem:local_ratio_identification}
Let $\alpha=(a_k)_{k\ge0}$ be a strictly positive sequence,
and let $A_{m,k}$, $r_{m,k}$, and $g_{m,k}$ be defined
as in~\eqref{eq:receAmk} and~\eqref{eq:rmk&gmk}.

Fix integers $n\ge1$ and $M\ge0$. For $0\le m\le M,$ Let
\begin{equation*}
D_m=\{z\in\mathbb C:|z-n|<R_m\},
\end{equation*}
where
$
0<R_m\le n
$
and
$
R_m-R_{m+1}>1.
$
Suppose that there exist functions
$g_m:D_m\to\mathbb C$, $0\le m\le M$,
satisfying the following conditions:
\begin{itemize}
\item[\textup{(i)}]
For every $0\le m\le M$, the function $g_m$ is
real-valued and strictly positive on $D_m\cap\mathbb R$.

\item[\textup{(ii)}]
For every $0\le m<M$ and $x\in D_{m+1}\cap\mathbb R$,
\begin{equation}\label{eq:local-g-recurrence}
g_{m+1}(x)
=
2g_m(x)
-h(g_m(x-1))
+2h(g_m(x))
-h(g_m(x+1)),
\end{equation}
where $h(t)=\log(1-e^{-t})$ for $t>0$.

\item[\textup{(iii)}]
For every $k\in D_0\cap\mathbb Z$,
\begin{equation}\label{eq:initial-identification}
g_0(k)
=
g_{0,k}
=
\log r_{0,k}
=
\log\frac{a_k^2}{a_{k-1}a_{k+1}}.
\end{equation}
\end{itemize}
Set $D_{-1}=D_0$. Then, for every $0\le m\le M$,
\begin{equation}\label{eq:local-positive}
A_{m,k}>0
\qquad
\text{for all }k\in D_{m-1}\cap\mathbb Z.
\end{equation}
Moreover, for every $k\in D_m\cap\mathbb Z$,  we have  $A_{m,k-1},A_{m,k},A_{m,k+1}>0$,
and
\begin{equation}\label{eq:local-identify}
g_m(k)=g_{m,k}=\log r_{m,k}.
\end{equation}
\end{lemma}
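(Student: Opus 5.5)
We argue by induction on $m$, proving the two assertions \eqref{eq:local-positive} and \eqref{eq:local-identify} together for each $m\in\{0,\dots,M\}$. Throughout we use that the disks are nested with a margin: since $R_m-R_{m+1}>1$, if $k\in D_{m+1}\cap\mathbb Z$ then $k-1,k,k+1\in D_m\cap\mathbb Z$. We also use $R_m\le n$, so every integer $k\in D_m$ satisfies $|k-n|<n$, i.e.\ $k\ge1$. Hence $g_{m,k}$ (which needs $k\ge1$) makes sense there, and for $k\in D_{m+1}$ we even have $k-1\ge1$.

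\emph{Base case $m=0$.} Here $A_{0,k}=a_k>0$ for all $k$, so \eqref{eq:local-positive} holds on $D_{-1}=D_0$ trivially. For $k\in D_0\cap\mathbb Z$ we have $k\ge1$, so $A_{0,k-1},A_{0,k},A_{0,k+1}>0$, $r_{0,k}$ is defined, and \eqref{eq:initial-identification} gives $g_0(k)=g_{0,k}$.

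\emph{Inductive step $m\to m+1$, with $m<M$.}
\begin{itemize}
\item[(a)] \emph{Positivity on $D_m$.} For $k\in D_m\cap\mathbb Z$, the inductive hypothesis gives $A_{m,k\pm1},A_{m,k}>0$ and $g_{m,k}=g_m(k)>0$ by condition (i). Then \eqref{A-g} yields $A_{m+1,k}=A_{m,k}^2(1-e^{-g_{m,k}})>0$. This is exactly \eqref{eq:local-positive} at level $m+1$, since $D_{(m+1)-1}=D_m$.
\item[(b)] \emph{Identification on $D_{m+1}$.} For $k\in D_{m+1}\cap\mathbb Z$, the points $k-1,k,k+1$ lie in $D_m$, so by (a) $A_{m+1,k-1},A_{m+1,k},A_{m+1,k+1}>0$.
\item[(c)] \emph{The recurrence.} The values $g_{m,k-1},g_{m,k},g_{m,k+1}$ are well defined and equal $g_m(k-1),g_m(k),g_m(k+1)$, all positive. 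Since $k-1\ge1$ and $k\ge2$, Lemma~\ref{lem:ratio_identity_and_persistence} applies and gives
\[
g_{m+1,k}=2g_{m,k}-h(g_{m,k-1})+2h(g_{m,k})-h(g_{m,k+1}).
\]
\item[(d)] \emph{Matching with $g_{m+1}$.} The right-hand side of (c) equals the right-hand side of \eqref{eq:local-g-recurrence} at $x=k\in D_{m+1}\cap\mathbb R$. Hence $g_{m+1}(k)=g_{m+1,k}$.
\end{itemize}

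The main obstacle is bookkeeping rather than analysis. One must check that $k\ge2$ whenever $k\in D_{m+1}$, which Lemma~\ref{lem:ratio_identity_and_persistence} needs. This follows from $R_{m+1}<R_m-1\le n-1$, giving $k>n-R_{m+1}>1$. One must also check that the shifted integers $k\pm1$ stay in $D_m$, which is where $R_m-R_{m+1}>1$ is used.
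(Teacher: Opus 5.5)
Your proof is correct and follows the paper's induction essentially step for step. You derive positivity of $A_{m+1,k}$ on $D_m$ from $A_{m+1,k}=A_{m,k}^2(1-e^{-g_m(k)})$, use the margin $R_m-R_{m+1}>1$ to place $k\pm1$ in $D_m$, and then match with condition (ii). The only cosmetic difference is that you invoke Lemma~\ref{lem:ratio_identity_and_persistence} for the recurrence, whereas the paper recomputes $r_{m+1,k}$ directly by substitution; this is why you check $k\ge2$ explicitly, while the paper uses it only implicitly when it writes $g_m(k-1)=\log r_{m,k-1}$.
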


\begin{proof}
We prove~\eqref{eq:local-positive} and~\eqref{eq:local-identify} simultaneously by induction on $m$.
For $m=0$, it is trivial that
$A_{0,k}=a_k>0$ for every $k\ge0$, and
\eqref{eq:local-identify} follows from
\eqref{eq:initial-identification}.
Assume that all conclusions hold for $0\le m<M$.

We first prove~\eqref{eq:local-positive} for $m+1$.
Let $k\in D_m\cap\mathbb Z$.
By the induction hypothesis, the three entries
$A_{m,k-1}$, $A_{m,k}$, and $A_{m,k+1}$ are strictly
positive, and
\begin{equation*}
g_m(k)=\log r_{m,k}
=
\log\frac{A_{m,k}^2}{A_{m,k-1}A_{m,k+1}}.
\end{equation*}
Consequently,
\begin{equation*}
\begin{aligned}
A_{m+1,k}
=
A_{m,k}^2-A_{m,k-1}A_{m,k+1}=
A_{m,k}^2\bigl(1-e^{-g_m(k)}\bigr)
>0,
\end{aligned}
\end{equation*}
where the last inequality follows from condition~\textup{(i)}.
Hence
\begin{equation*}
A_{m+1,k}>0
\qquad
\text{for every }k\in D_m\cap\mathbb Z.
\end{equation*}
This proves~\eqref{eq:local-positive}.

Let $k\in D_{m+1}\cap\mathbb Z$.
Since $R_m-R_{m+1}>1$, we have
\begin{equation*}
|(k\pm1)-n|
\le |k-n|+1
<R_{m+1}+1
<R_m.
\end{equation*}
Therefore
$\
k-1,\ k,\ k+1\in D_m\cap\mathbb Z.
$
Thus, by~\eqref{eq:local-positive}
\begin{equation*}
A_{m+1,k-1}>0,\qquad
A_{m+1,k}>0,\qquad
A_{m+1,k+1}>0.
\end{equation*}
In particular, $r_{m+1,k}$ is well defined.

For each $j\in\{k-1,k,k+1\}$, the induction hypothesis gives
$g_m(j)=\log r_{m,j}$, and hence
\begin{equation*}
A_{m+1,j}
=
A_{m,j}^2\bigl(1-e^{-g_m(j)}\bigr).
\end{equation*}
Substituting these identities into the definition of
$r_{m+1,k}$ yields
\begin{equation*}
\begin{aligned}
r_{m+1,k}
&=
\frac{A_{m+1,k}^2}
{A_{m+1,k-1}A_{m+1,k+1}}\\
&=
\frac{
A_{m,k}^4\bigl(1-e^{-g_m(k)}\bigr)^2
}{
A_{m,k-1}^2A_{m,k+1}^2
\bigl(1-e^{-g_m(k-1)}\bigr)
\bigl(1-e^{-g_m(k+1)}\bigr)
}\\
&=
r_{m,k}^2
\frac{
\bigl(1-e^{-g_m(k)}\bigr)^2
}{
\bigl(1-e^{-g_m(k-1)}\bigr)
\bigl(1-e^{-g_m(k+1)}\bigr)
}.
\end{aligned}
\end{equation*}
Thus, we obtain
\begin{equation*}
\begin{aligned}
g_{m+1}(k)=\log r_{m+1,k}
=
2g_m(k)
-h(g_m(k-1))
+2h(g_m(k))
-h(g_m(k+1)).
\end{aligned}
\end{equation*}
This proves~\eqref{eq:local-identify} and completes the proof.
\end{proof}

Recall from~\eqref{eq:F(x)} that $\lambda_n = \xi^{(2n)}(1/2)/(2n)!$. 
These coefficients admit a classical representation as positive moments; see~\cite[Eq.~(7.1)]{Osu21} and~\cite[Eq.~(5.1)]{Rom21}.
This integral formula naturally provides an analytic interpolation of the sequence $(\lambda_n)_{n\ge0}$. 
Its holomorphy follows from standard results on integrals depending holomorphically on a parameter, while its positivity on the real axis follows from the kernel (see~\cite{Mic26}).
We summarize these properties in the following lemma.
For the sake of completeness, we present a proof here.

\begin{lemma}\label{lem:theta_moments_and_interpolation}
For $\Re z>-1/2$, let
\begin{equation}\label{eq:H-Lambda}
H(z)
=
2^{1-2z}
\int_0^\infty
u^{2z}e^{u/4}\omega(e^u)\,du,
\quad\textrm{and}\quad
\Lambda(z)
=
\frac{H(z)}{\Gamma(2z+1)},
\end{equation}
where $\omega(t)=\sum_{j\ge1}(2\pi^2j^4t^2-3\pi j^2t)e^{-\pi j^2t}$.
Then 
\begin{itemize}
\item[\rm(i)] for any integer $n>0$, we have 
\begin{equation}\label{eq:lambda-moment}
\lambda_n=\Lambda(n)
=
\frac1{(2n)!}
\int_0^\infty 4e^{t/2}
\sum_{j\ge1}
(2\pi^2 j^4e^{4t}-3\pi j^2e^{2t})e^{-\pi j^2e^{2t}}t^{2n}\,dt;
\end{equation}
\item[\rm(ii)] $H(z)$ and $\Lambda(z)$ are holomorphic on  $\{z\in\mathbb C:\Re z>-1/2\};$
\item[\rm(iii)] $\Lambda(x)>0$ for real $x>-1/2$.
\end{itemize}
\end{lemma}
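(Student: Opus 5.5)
I will start from the classical Riemann–Jacobi integral representation of $\xi$ on the critical line,
\begin{equation*}
\xi\!\left(\tfrac12+it\right)=\int_0^\infty \Phi(u)\cos(ut)\,du,\qquad
\Phi(u)=2\sum_{j\ge1}\bigl(2\pi^2j^4e^{9u/2}-3\pi j^2e^{5u/2}\bigr)e^{-\pi j^2e^{2u}} .
\end{equation*}
Replacing $it$ by $s-\tfrac12$, so that $\cos(ut)=\cosh(u(s-\tfrac12))$, and expanding $\cosh$ in powers is legitimate because $\Phi$ decays doubly exponentially; interchanging sum and integral by dominated convergence gives
\begin{equation*}
\xi\!\left(\tfrac12+w\right)=\sum_{n\ge0}\frac{w^{2n}}{(2n)!}\int_0^\infty \Phi(u)u^{2n}\,du .
\end{equation*}
Hence $\lambda_n=\frac1{(2n)!}\int_0^\infty\Phi(u)u^{2n}\,du$. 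Writing $2e^{9u/2}=2e^{u/2}e^{4u}$ (so the prefactor becomes $4e^{u/2}$ after pulling out the factor $2$ of $\Phi$), this is exactly the integrand in \eqref{eq:lambda-moment} with $t=u$.

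To match $H(n)/\Gamma(2n+1)$, I will substitute $u=2t$ in the integral defining $H$. Since $\omega(e^{2t})=\sum_j(2\pi^2j^4e^{4t}-3\pi j^2e^{2t})e^{-\pi j^2e^{2t}}$, $e^{u/4}=e^{t/2}$, $u^{2z}=2^{2z}t^{2z}$ and $du=2\,dt$, the prefactor becomes $2^{1-2z}\cdot2^{2z}\cdot2=4$. This gives
\begin{equation*}
H(z)=\int_0^\infty 4e^{t/2}\,\omega(e^{2t})\,t^{2z}\,dt ,
\end{equation*}
and with $\Gamma(2n+1)=(2n)!$ part (i) follows. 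The formula also holds at $n=0$.

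For (ii), I will check the standard criterion: the integrand is holomorphic in $z$ for each fixed $u>0$, and it is dominated locally uniformly. For $\Re z\in[a,b]$ with $a>-1/2$, we have $|u^{2z}|\le u^{2a}+u^{2b}$. Near $u=0$, $\omega(e^u)$ is bounded, and $u^{2a}$ is integrable since $2a>-1$. As $u\to\infty$, $\omega(e^u)$ decays like $e^{2u}e^{-\pi e^u}$. Morera's theorem together with Fubini (or differentiation under the integral sign) then gives holomorphy of $H$. Since $1/\Gamma(2z+1)$ is entire, $\Lambda$ is holomorphic as well.

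Part (iii) is the main point, and it reduces to showing $\omega(t)>0$ for $t\ge1$. Once that holds, the integrand is positive for real $z$ and $u>0$, so $H(x)>0$; and $\Gamma(2x+1)>0$ for $2x+1>0$. For positivity of $\omega$, I will use the bound $2\pi^2j^4t^2-3\pi j^2t=\pi j^2t(2\pi j^2t-3)>0$, valid since $2\pi t\ge2\pi>3$. So every term of the series is positive for $t\ge1$, which is exactly the range $t=e^u$, $u\ge0$. This is the only step requiring care, and it is elementary. The apparent obstacle is only the normalization bookkeeping (factors of $2$ and the choice of $\Phi$), which I will double-check against $\lambda_0=\xi(1/2)$.
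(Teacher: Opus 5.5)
Your proposal is correct in substance, though the normalization needs fixing (second paragraph). Parts (ii) and (iii) coincide with the paper's argument: a locally uniform integrable majorant in $z$, and termwise positivity $\pi j^2t(2\pi j^2t-3)>0$ for $t=e^u\ge1$ together with $\Gamma(2x+1)>0$.

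For (i) you take a different route. The paper simply cites O'Sullivan's formula $\xi^{(2n)}(1/2)=2^{1-2n}\int_1^\infty(\log s)^{2n}\omega(s)s^{-3/4}\,ds$ and substitutes $s=e^u$, then $u=2t$. You instead rederive that formula from the Riemann--P\'olya cosine transform. You continue to complex $t$ (both sides are entire thanks to the doubly exponential decay) and expand $\cosh$ termwise under dominated convergence. Your version is self-contained modulo the textbook identity for $\xi(\tfrac12+it)$, at the cost of the continuation and interchange steps; the paper's is a one-line citation. Your substitution $u=2t$ in $H$, giving $H(z)=4\int_0^\infty e^{t/2}\omega(e^{2t})t^{2z}\,dt$, is exactly right.

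The normalization is wrong as written, and two slips cancel. Take your $\Phi(u)=2\sum_{j\ge1}(2\pi^2j^4e^{9u/2}-3\pi j^2e^{5u/2})e^{-\pi j^2e^{2u}}$. The classical identity (Titchmarsh, \S2.16) is $\xi(\tfrac12+it)=2\int_0^\infty\Phi(u)\cos(ut)\,du$. Your version is off by a factor $2$: at $t=0$ one actually has $\int_0^\infty\Phi(u)\,du=\tfrac12\xi(\tfrac12)$. The check against $\lambda_0=\xi(1/2)\approx0.497$ that you propose would expose this if applied to the identity as you stated it.

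Independently, $2\bigl(2\pi^2j^4e^{9u/2}-3\pi j^2e^{5u/2}\bigr)=2e^{u/2}\bigl(2\pi^2j^4e^{4u}-3\pi j^2e^{2u}\bigr)$, so your stated $\Phi$ yields the prefactor $2e^{u/2}$, not $4e^{u/2}$.

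Start instead from $\xi(\tfrac12+it)=4\int_0^\infty e^{u/2}\omega(e^{2u})\cos(ut)\,du$. This follows from Riemann's formula $\xi(\tfrac12+it)=4\int_1^\infty\frac{d}{dx}\bigl[x^{3/2}\psi'(x)\bigr]x^{-1/4}\cos(\tfrac t2\log x)\,dx$, where $\psi(x)=\sum_{j\ge1}e^{-\pi j^2x}$, using $\omega(x)=2x^{1/2}\frac{d}{dx}\bigl[x^{3/2}\psi'(x)\bigr]$ and the substitution $x=e^{2u}$. With that starting point, the rest of your argument goes through unchanged.
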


\begin{proof}
We use the following classical integral representation (see~\cite[Eq.~(7.1)]{Osu21})
\begin{equation}\label{eq:OSullivan}
\xi^{(2n)}\left(\frac12\right)
=
2^{1-2n}
\int_1^\infty
(\log s)^{2n}\omega(s)s^{-3/4}\,ds,
\quad n\ge 0.
\end{equation}
Let $s=e^u$ in \eqref{eq:OSullivan}.
Then we have
\begin{equation}\label{eq:Rie_xi_form}
\xi^{(2n)}\left(\frac12\right)
=
2^{1-2n}
\int_0^\infty
u^{2n}e^{u/4}\omega(e^u)\,du
=
H(n).
\end{equation}
Together with~\eqref{eq:F(x)}, we can obtain that
\begin{equation*}
\lambda_n=
\frac{\xi^{(2n)}(1/2)}{(2n)!}
=
\frac{H(n)}{\Gamma(2n+1)}
=\Lambda(n).
\end{equation*}
Substituting $u = 2t$ into~\eqref{eq:Rie_xi_form} yields~\eqref{eq:lambda-moment}.
This proves (i).

We next prove that $H(z)$ is holomorphic on
$\Omega:=\left\{z\in\mathbb C:\Re z>-1/2\right\}$.
For every fixed $u>0$, it is easy to see that the
function $2^{1-2z}u^{2z}e^{u/4}\omega(e^u)$ is holomorphic in $z$ on $\Omega$.
By the standard theorem on holomorphic parameter integrals (see~\cite[Section~2.7]{Has18}),
if the integrand is holomorphic and, on every compact subset of the parameter domain, is bounded in absolute value by an integrable function independent of $z$, then the integral is holomorphic.
In the following, we will show that $|2^{1-2z}u^{2z}e^{u/4}\omega(e^u)|$ is bounded by an integrable function independent of $z$.

Let $E \subseteq \Omega$ be any compact set. 
Then there exist real numbers $M_1$ and $M_2$ such that
\begin{equation*}
-\frac12<M_1 \le \Re z \le M_2, \quad \text{for}\ z\in E.
\end{equation*}
Moreover, $|2^{1-2z}|$ is bounded on $E$.

For $0<u\le1$,
the function $e^{u/4}\omega(e^u)$ is bounded because $e^u\in[1,e]$ and $\omega$ is continuous on $(0,\infty)$. 
Thus, for some constant $C_E>0$,
\begin{equation*}
\left|
2^{1-2z}
u^{2z}e^{u/4}\omega(e^u)
\right|
\le
C_Eu^{2M_1},
\quad
\text{for $0<u\le1$ and $z\in E$.}
\end{equation*}
Since $M_1>-1/2$,
\begin{equation*}
\int_0^1u^{2M_1}\,du
=
\frac1{2M_1+1}
<\infty.
\end{equation*}

For $u\ge1$, we have
\begin{equation*}
|\omega(e^u)|
\le
\sum_{j\ge1}
\left(
2\pi^2j^4e^{2u}
+
3\pi j^2e^u
\right)
e^{-\pi j^2e^u}.
\end{equation*}
For $j\ge1$, we have 
\begin{equation*}
e^{-\pi j^2e^u}
=
e^{-\pi e^u}
e^{-\pi e^u(j^2-1)}
\le
e^{-\pi e^u}
e^{-\pi(j^2-1)}.
\end{equation*}
Since
\begin{equation*}
\sum_{j\ge1}j^4e^{-\pi(j^2-1)}
<\infty
\quad\text{and}\quad
\sum_{j\ge1}j^2e^{-\pi(j^2-1)}
<\infty,
\end{equation*}
there exist constants $C,C_1,C_2>0$ such that
\begin{equation*}
|e^{u/4}\omega(e^u)|
\le
e^{u/4}\left(C_1 2\pi^2 e^{-\pi e^u}e^{2u}+C_2 3\pi e^{-\pi e^u}e^{u} \right)
\le
Ce^{9u/4}e^{-\pi e^u},
\quad u\ge1.
\end{equation*}
Since $\Re z\le M_2$ on $E$,
\begin{equation*}
|u^{2z}|=u^{2\Re z}\le u^{2M_2},
\quad u\ge1.
\end{equation*}
It follows that
\begin{equation*}
\left|
2^{1-2z}
u^{2z}e^{u/4}\omega(e^u)
\right|
\le
Cu^{2M_2}e^{9u/4}e^{-\pi e^u},
\quad
\text{for $u\ge1$ and $z\in E$,}
\end{equation*}
and the function on the right is integrable over $[1,\infty)$. 
Hence,
we obtain that $H(z)$ 
is holomorphic on $\Omega$.
Since $1/\Gamma(1+2z)$ is entire, $\Lambda(z)$ is holomorphic on $\Omega$ as well.
This proves (ii).

We now prove $\Lambda(x)>0$ for real $x>-1/2$.
For $u\ge0$, set $v_j=\pi j^2e^u$.
Then $v_j\ge\pi>3/2$, and
$2v_j^2-3v_j=v_j(2v_j-3)>0$.
Thus $\omega(e^u)>0$ for any $u\ge0$.
Since $2^{1-2x}>0$, $u^{2x}>0$, $e^{u/4}>0$ for $x>-1/2$ and $u>0$, it follows that $H(x)>0$.
Consequently,
\begin{equation*}
\Lambda(x)
=
\frac{H(x)}{\Gamma(2x+1)}
>0.
\end{equation*}
This completes the proof.
\end{proof}

%=================================================
\section{The case $n\ge 12000$ and $m\le \lceil\log_2 n\rceil$}\label{sec:analytic}
%=================================================
In this section, we establish the positivity of $A_{m,n}$ for every integer $n\ge12000$ and $0\le m\le\lceil\log_2 n\rceil$, together with the ratio bound $r_{\lceil\log_2 n\rceil,n}>(3+\sqrt5)/2$.

Throughout the paper, we use the following conventions for logarithms.
For the complex integration variable
$u\in\mathbb C\setminus(-\infty,0]$, $\Log u$ denotes
the principal logarithm:
\begin{equation*}
\Log u=\log|u|+i\Arg u,\qquad -\pi<\Arg u<\pi.
\end{equation*}
Complex powers are defined by $u^{2z}=\exp(2z\Log u)$.
For a nonvanishing holomorphic function $f$ on a simply
connected domain $D$, $\Log f(z)$ denotes a holomorphic
logarithm of $f$ on $D$, satisfying
\begin{equation*}
\exp(\Log f(z))=f(z).
\end{equation*}
Whenever $f$ is positive on a real interval
$I\subset D$, we choose the branch so that
$\Log f(x)=\log f(x)$ for $x\in I$.
The same branch is used at all points involved in each
expression.

We use 
$W$  to denote the principal branch $W_0$ of the
Lambert $W$ function. It is holomorphic on
$\mathbb C\setminus(-\infty,-e^{-1}]$
and satisfies
\begin{equation*}
W(\zeta)e^{W(\zeta)}=\zeta.
\end{equation*}
For real $x>0$, $W(x)$ is the unique positive solution
of $we^w=x$; see~\cite{CGHJK96}.
Square roots in these estimates are taken on the
principal branch:
\begin{equation*}
\sqrt{\zeta}
=
\exp\!\left(\frac12\Log\zeta\right),
\qquad
\zeta\in\mathbb C\setminus(-\infty,0].
\end{equation*}

We will bound the function $H(z)$ defined in~\eqref{eq:H-Lambda} in the sector by a saddle-point estimate,
where $w=W(2z/\pi)$ is the saddle point of the required function in the integrand of $H(z)$. 
Before that, we present the following result.

\begin{lemma}
Suppose that
\begin{equation}\label{sector}
|\arg z|\le\frac\pi4,\qquad |z|\ge4000.
\end{equation}
Let $w=W(2z/\pi)=a+ib$ and $T=|z|/a$.
Then we have 
\begin{equation}\label{eq:aT}
a\ge6,\qquad T\ge\frac\pi2e^6>630,
\end{equation}
\end{lemma}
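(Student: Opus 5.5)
The plan is to read everything off the defining relation $we^{w}=2z/\pi$. Taking absolute values gives
\begin{equation*}
|w|\,e^{a}=\frac{2|z|}{\pi},
\end{equation*}
so that
\begin{equation*}
T=\frac{|z|}{a}=\frac{\pi}{2}\cdot\frac{|w|}{a}\,e^{a}.
\end{equation*}
Once we know $a\ge6$, the inequality $|w|\ge a>0$ gives $T\ge\frac{\pi}{2}e^{a}\ge\frac{\pi}{2}e^{6}$. Numerically $\frac{\pi}{2}e^6\approx 633.7>630$. So the whole task reduces to proving $a\ge6$, and for this we first control the imaginary part $b$.

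\emph{Step 1: argument bookkeeping.} Put $\zeta=2z/\pi$. On the sector \eqref{sector} the point $\zeta$ lies in the closed sector $|\Arg\zeta|\le\pi/4$, which avoids the cut $(-\infty,-e^{-1}]$, so $W$ is continuous there. We also have $w\ne0$. Moreover, $w$ is never a negative real number, since otherwise $\zeta=we^{w}$ would be real and negative.

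Hence $\Arg w$ is continuous on the region. Taking arguments in $we^{w}=\zeta$ shows that
\begin{equation*}
\Arg w+b-\Arg\zeta\in 2\pi\mathbb Z
\end{equation*}
throughout the region. This expression is continuous, and the region is connected. It vanishes on the positive real axis, where $w>0$ and $b=0$. Therefore
\begin{equation*}
\Arg w+b=\Arg\zeta \quad\text{identically}.
\end{equation*}

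The sign of $\Arg w$ always equals the sign of $b$. The two summands therefore have the same sign, and we obtain
\begin{equation*}
|b|\le\pi/4,\qquad |\Arg w|\le\pi/4.
\end{equation*}
In particular $a\ge|w|\cos(\pi/4)>0$.

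\emph{Step 2: $a\ge6$.} Suppose, for contradiction, that $a<6$. Then
\begin{equation*}
|w|=\sqrt{a^{2}+b^{2}}\le\sqrt{36+\pi^{2}/16}<6.06,
\end{equation*}
so
\begin{equation*}
|w|e^{a}<6.06\,e^{6}<2450.
\end{equation*}
On the other hand,
\begin{equation*}
|w|e^{a}=\frac{2|z|}{\pi}\ge\frac{8000}{\pi}>2546.
\end{equation*}
These two bounds are incompatible, so $a\ge6$. With Step 1 this also gives $|w|\ge a$, and the bound on $T$ follows as described above.

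The only delicate point is the branch identity $\Arg w+b=\Arg\zeta$. Without it we could only conclude that $\Arg w+b\equiv\Arg\zeta$ modulo $2\pi$. I would settle it by the continuity and connectedness argument above rather than by invoking the description of the image of $W_0$. The remaining steps are elementary numerics.
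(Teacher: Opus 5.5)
Your proof is correct and follows the same route as the paper. Both take the modulus and argument of $we^{w}=2z/\pi$, use continuity on the connected sector to fix the branch so that $|b|\le|\arg z|\le\pi/4$, check numerically at $a=6$, and conclude from $T=\frac{\pi}{2}e^{a}|w|/a\ge\frac{\pi}{2}e^{6}$. Your version differs only in two simplifications. Working with $\Arg w$ instead of $\arctan(b/a)$ gives $a>0$ as a byproduct, whereas the paper proves $a>0$ separately using the cited fact that $|b|<\pi$ on the principal branch. Your direct contradiction $|w|e^{a}<6.06\,e^{6}<8000/\pi$ replaces the paper's monotonicity argument for $N_\vartheta(a)$.
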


\begin{proof}

First, we show that $a>0$.
For the principal branch $W$, we have $|b|<\pi$;
see~\cite[Section~4]{CGHJK96}.
Suppose that $a\le0$.
Since $we^w=2z/\pi$, using $e^{-a}\ge 1-a$, 
we obtain
\begin{equation*}
\frac{2|z|}{\pi}
=
e^{a}\sqrt{a^2+b^2}
<
e^{a}(-a+\pi)\le e^{a}(-a+1)\pi
\le \pi,
\end{equation*}
This implies $|z|<\pi^2/2<4000$, contradicting
the hypothesis. Therefore $a>0$.

Let $\vartheta=\arg z$. 
We claim that 
\begin{equation}\label{claim:proof 3.1}
|z|=\frac\pi2e^a\sqrt{a^2+b^2},\qquad
b+\arctan(b/a)=\vartheta.
\end{equation}

Taking $|we^w|=|2z/\pi|$, 
\begin{equation*}
\frac{2|z|}{\pi}
=|we^w|
=|w|e^{\operatorname{\Re}w}
=\sqrt{a^2+b^2}\cdot e^a.
\end{equation*}
This yields the first equation in~\eqref{claim:proof 3.1}.
Since $a>0$, we have $\arg w=\arctan(b/a)$, and hence
\begin{equation*}
we^w=\sqrt{a^2+b^2} \,\exp{\left(i\,\arctan\frac{b}{a}\right)} \,e^{a+ib}
=e^a\sqrt{a^2+b^2}\,
\exp\!\left(i\left(b+\arctan\frac{b}{a}\right)\right).
\end{equation*}
Comparing this with
$
{2z}/{\pi}={2|z|}e^{i\vartheta}/\pi
$
gives
\begin{equation*}
b+\arctan\frac{b}{a}-\vartheta\in 2\pi\mathbb{Z}.
\end{equation*}
Note that $b+\arctan\frac{b}{a}-\vartheta$ is continuous in $z$ on the sector. 
For $z>0$, we have $w>0$,
and $b=\vartheta=0$. Thus 
$b+\arctan\frac{b}{a}-\vartheta=0$,
and this proves the claim.

For each $a>0$, 
let 
$$f_a(b)=b+\arctan(b/a)$$
be the function of $b$ with $b\in(-\infty,\infty)$.
Because $f'_a(b)=1+a/(a^2+b^2)>0$,
the function $f_a(b)$ is increasing and $f_a(b)\in(-\infty,\infty)$.
Hence, for a fixed  $-\pi/4\le \vartheta\le \pi/4$,
the second equality in~\eqref{claim:proof 3.1}
$b+\arctan(b/a)=\vartheta$  uniquely determines
$b=b(a)$.
Since $a>0$, 
$$
|\vartheta|=|b+\arctan(b/a)|=|b|+|\arctan(b/a)|\ge |b|.
$$
Thus, 
\begin{equation}\label{b<pi/4}
|b|\le|\vartheta|\le\pi/4. 
\end{equation}

On the other hand, implicit differentiation of $b+\arctan(b/a)=\vartheta$ with respect to $a$ gives
\begin{equation*}
b'(a)=\frac{b}{a^2+b^2+a}.
\end{equation*}
Let $N_\vartheta(a)=(\pi/2)e^a\sqrt{a^2+b^2}$, where $b=b(a)$ is a function of $a$.
It follows that
\begin{equation*}
\frac{N_\vartheta'(a)}{N_\vartheta(a)}
=1+\frac{a+bb'}{a^2+b^2}
=\frac{(a+1)^2+b^2}{a^2+b^2+a}>0.
\end{equation*}
Since $N_\vartheta(a)>0$, we have $N_\vartheta'(a)>0$  for all $a>0$ and $N_\vartheta(a)$ is increasing.
Moreover,
\begin{equation*}
N_\vartheta(6)
\le\frac\pi2e^6\sqrt{36+\frac{\pi^2}{16}}<4000,
\qquad
N_\vartheta(a)\longrightarrow\infty\quad(a\longrightarrow\infty).
\end{equation*}
Thus, for each $|z|\ge4000$, we have $a>6$. This proves the first inequality in~\eqref{eq:aT}.

Recall that $w=a+ib$. We have proved $a>6$ and by~\eqref{b<pi/4}, $|b|\le \pi/4$. Then
\begin{equation}\label{1009}
1\le\frac{|w|}{a}=\sqrt{1+\frac{b^2}{a^2}}\le \sqrt{1+\frac{\pi^2}{16}\cdot\frac{1}{36}}
=\sqrt{1+\frac{\pi^2}{576}}<1.009.
\end{equation}
Since $we^w=2z/\pi$, 
we have 
\begin{equation}\label{eq:xi-saddle-parameters}
T=\frac{|z|}{a}=\frac{\pi |e^w|}{2}\frac{|w|}{a}=\frac{\pi e^a}{2}\frac{|w|}{a}.
\end{equation}
Thus $T\ge\frac\pi2e^6>630$. This proves~\eqref{eq:aT}.
\end{proof}

Moreover, since $a>6$ and $$T=\frac{\pi e^a}{2}\frac{|w|}{a},$$
by~\eqref{1009}, we have 
\begin{align}\label{piea}
\pi e^a=2T\frac{a}{|w|}\le2T,\qquad
\frac{\pi}{2}e^a\le T<\frac{\pi}{2}\cdot1.009e^a<1.585e^a, \qquad T=\frac{|z|}{a}\le\frac{|z|}{6}.
\end{align}

\subsection{A saddle estimate of $H(z)$}
In this subsection, we will give an explicit complex saddle estimate of $H(z)$.
Recall that
\begin{equation*}
\omega(t)=\sum_{k\ge1}(2\pi^2k^4t^2-3\pi k^2t)e^{-\pi k^2t},
\qquad \Re t>0,
\end{equation*}
and
\begin{equation*}
H(z)=2^{1-2z}\int_0^\infty u^{2z}e^{u/4}\omega(e^u)\,du,
\qquad \Re z>-\frac12.
\end{equation*}

\begin{lemma}\label{lem:xi-explicit-saddle}
Suppose that
$|\arg z|\le\frac\pi4$ and $|z|\ge4000$.
Let $w=W(2z/\pi)=a+ib$ and $T=|z|/a$
and let $H(z)$ be the function defined in~\eqref{eq:H-Lambda}.	Define
\begin{equation*}
H_*(z)=4\pi^2e^{7w/4}\sqrt{\frac{2w}{1+w}}
\exp\!\left(2z\left(\operatorname{Log}\frac w2-\frac1w\right)\right).
\end{equation*}
Then
\begin{equation}\label{eq:xi-saddle-error}
\left|\frac{H(z)}{H_*(z)}-1\right|\le\frac{30}{T},
\qquad
\left|\operatorname{Log}\frac{H(z)}{H_*(z)}\right|
\le\frac{32}{T}<0.051.
\end{equation}
Here the logarithm is the holomorphic branch of $\operatorname{Log}(1+v)$
with $|v|<1$. In particular, $H(z)$ has no zeros in the sector $\{z:|\arg z|\le\frac\pi4$, $|z|\ge4000\}$.
\end{lemma}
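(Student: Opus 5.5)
The plan is to identify $H_*(z)$ as the leading Laplace (saddle-point) approximation of the dominant $j=1$, $2\pi^2 j^4t^2$ part of the integrand, and then to bound every discarded piece by an explicit multiple of $1/T$.

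\textbf{Saddle computation.} Write the main part of $H(z)$ as
\begin{equation*}
M(z)=2^{1-2z}\,2\pi^2\int_0^\infty e^{9u/4}\exp\bigl(\phi(u)\bigr)\,du,
\qquad
\phi(u)=2z\Log u-\pi e^u .
\end{equation*}
The equation $\phi'(u)=0$ reads $ue^u=2z/\pi$, so the saddle point is $u=w=W(2z/\pi)$. At the saddle, $\phi(w)=2z(\Log w-1/w)$ and $\phi''(w)=-(2z/w^2)(1+w)$. Using $z=\pi we^w/2$, the Gaussian factor $\sqrt{2\pi/(-\phi''(w))}$ simplifies to $e^{-w/2}\sqrt{2w/(1+w)}$. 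Combining this with $2^{-2z}e^{2z\Log w}=e^{2z\Log(w/2)}$ and $e^{9w/4}e^{-w/2}=e^{7w/4}$ reproduces $H_*(z)$ exactly. This is a formal check that fixes the normalization. Note also that $|\phi''(w)|\asymp 2|z|/a=2T$ up to the constant in \eqref{1009}, which explains why the natural error scale is $1/T$.

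\textbf{Rigorous error bound.} I would split $H(z)-H_*(z)$ into four pieces.
\begin{itemize}
\item[(a)] \emph{Terms $j\ge2$.} For these, $e^{-\pi j^2 e^u}\le e^{-\pi e^u}e^{-3\pi e^u}$. Near $u\approx a$ this gives a relative contribution of size $e^{-3\pi e^a}$, which is astronomically smaller than $1/T$. Away from the saddle, the crude envelope from Lemma~\ref{lem:theta_moments_and_interpolation} suffices.
\item[(b)] \emph{The $-3\pi j^2 t$ term for $j=1$.} Relative to the main term it equals $-3/(2\pi e^u)$. At the saddle this is about $3/(2\pi e^a)\le 3/(4T)$ by \eqref{piea}.
\item[(c)] \emph{Laplace correction for $M(z)$ near the saddle.} I would shift the contour $[0,\infty)$ to a path through $w$; since $|b|\le\pi/4$, a short vertical segment near $a$ is enough. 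Substituting $u=w+s/\sqrt{-\phi''(w)}$, I would expand $\phi$ and $e^{9u/4}$ to third order with remainders controlled by Cauchy estimates on a disk $|u-w|\le\rho$ with $\rho$ of order $1$. The odd cubic term integrates to zero, and the remaining corrections are $O(1/T)$ with explicit constants.
\item[(d)] \emph{Tails.} Outside $|u-w|\le\rho$, I would show that $\Re\phi(u)-\Re\phi(w)\le -cT\rho^2$. This requires concavity of $\Re\phi$ along the path together with monotonicity for large and small $u$. The tails are then bounded by $e^{-cT\rho^2}$ times polynomial factors, which is $\ll 1/T$ because $T>630$.
\end{itemize}
Summing the constants from (a)--(d), using $a\ge6$, $T>630$ and $|w|/a<1.009$ from \eqref{eq:aT}, should give $|H/H_*-1|\le 30/T$.

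\textbf{Logarithm bound.} Set $v=H/H_*-1$. Then $|v|\le 30/630<1$, and
\begin{equation*}
|\Log(1+v)|\le\frac{|v|}{1-|v|}\le\frac{30/T}{1-30/630}<\frac{32}{T}\le\frac{32}{630}<0.051 .
\end{equation*}
Since $H_*$ is a product of exponentials and of a nonvanishing square root (as $\Re w>0$), it has no zeros, and therefore neither does $H=H_*(1+v)$.

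\textbf{Expected obstacle.} The main difficulty is step (c) together with the tail control in (d) for complex $z$. There one must choose a concrete contour, prove that $\Re\phi$ decreases away from $w$ along it uniformly over the sector $|\arg z|\le\pi/4$, and keep all Taylor-remainder constants explicit enough to land under $30$. I would first try a straight line through $w$ in the steepest-descent direction, whose angle is $-\tfrac12\arg(-\phi''(w))$ and hence small. I would join it to the real axis with short vertical segments, whose contribution is negligible because $\Re\phi$ is already very negative there.
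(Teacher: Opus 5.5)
Your normalization of $H_*$, the handling of the $j\ge2$ terms, the $-3\pi$ correction (your $3/(2\pi e^a)\lesssim 3/(4T)$ is the paper's $M_{5/4}/M_{9/4}=e^{-w}$ bound), the cancellation of odd Taylor terms over a symmetric window, and the final $\Log(1+v)$ step all match the paper. The genuine gap is the contour, the one point where you commit to something concrete, and what you commit to fails.

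\textbf{The contour near $u\approx a$.} You propose to follow the real axis up to near $u\approx a$ and reach $w$ by a short vertical segment. Alternatively, you would join a short steepest-descent segment through $w$ back to the real axis by short vertical segments. In both cases you assert that $\Re\phi$ is ``already very negative there''. For non-real $z$ this is false. On the real axis $|e^{\phi(x)+\beta x}|=e^{\Re\phi(x)+\beta x}$ with $\Re\phi(x)=2\Re z\log x-\pi e^x$, and exactly
\begin{equation*}
\Re\phi(a)-\Re\phi(w)=2\Im z\,\arg w-\pi e^{a}(1-\cos b)-2\Re z\log\frac{|w|}{a}.
\end{equation*}
This is positive and of order $T$ unless $\arg z$ is small. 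For example, at $\arg z=\pi/4$, $|z|=4000$:
\begin{itemize}
\item[] $a\approx6.04$, $b\approx0.67$, $T\approx662$ and $\Re\phi(a)-\Re\phi(w)\approx306$;
\item[] $\max_{x>0}\Re\phi(x)$, attained near $x\approx5.75$, exceeds $\Re\phi(w)$ by about $359$;
\item[] $\Re\phi(x)>\Re\phi(w)$ on the whole real interval from about $a-1.15$ to $a+0.4$.
\end{itemize}
So the real-axis integral near $a$ involves cancellation of size $e^{cT}$. Consequently:
\begin{itemize}
\item[] the real-axis part of your path and your short vertical joins cannot be bounded in absolute value against $|M_{9/4}|$;
\item[] your step (d), $\Re\phi(u)-\Re\phi(w)\le-cT\rho^2$ off the disk, is false on those pieces;
\item[] the same objection applies to using the real-axis envelope of Lemma~\ref{lem:theta_moments_and_interpolation} there for $j\ge2$.
\end{itemize}

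\textbf{The missing idea.} Leave the real axis at the origin, not near $a$. The paper integrates from $0$ to $ib$ and then along the full horizontal line $\Im u=b$ through $w$.
\begin{itemize}
\item[] \emph{Vertical piece.} On $u=it$ one has $|e^{\phi(it)+i\beta t}|=\exp(2\Re z\log|t|-\pi|\Im z|-\pi\cos t)\le1$. This uses $|t|\le\pi/4<1$ and the fact that $b$ and $\Im z$ have the same sign. It is negligible against $|M_\beta|>e^{2|z|}T^{-1/2}$.
\item[] \emph{Range $y\ge -a/2$.} On the horizontal line, $y\mapsto\Re\phi(w+y)$ is strictly concave on $[-a/2,\infty)$, since $\Re\bigl(2z/(w+y)^2\bigr)>0$ and $\cos b>0$ there. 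Because $\phi'(w)=0$, the point $y=0$ is its maximum, and this gives the exponentially small tails.
\item[] \emph{Range $[-a,-a/2]$.} Concavity is unavailable here. Instead the paper uses the crude bound $\Re(\phi(w+y)-\phi(w))\le-0.58|z|$. It follows from $|w+y|/|w|<0.52$ and the nonpositivity of $-2\Im z\,(\arg(w+y)-\arg w)$.
\end{itemize}
If you want to keep a steepest-descent segment, you can return to the real axis only outside an interval like the one above. That means a distance of order one from $a$, not a short segment, and it needs its own estimates uniform over the sector. The line $\Im u=b$ avoids this entirely.
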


We now prove Lemma~\ref{lem:xi-explicit-saddle}.
In the sector~\eqref{sector}, we have $\Re z>0$ and
\begin{align*}
H(z)&=2^{1-2z}\int_0^\infty u^{2z}e^{u/4}\sum_{k\ge1}(2\pi^2k^4e^{2u}-3\pi k^2e^u)e^{-\pi k^2e^u}\,du\\
&=2^{1-2z}\int_0^\infty u^{2z}e^{u/4}\left((2\pi^2e^{2u}-3\pi e^u)e^{-\pi e^u}+\sum_{k\ge2}(2\pi^2k^4e^{2u}-3\pi k^2e^u)e^{-\pi k^2e^u}\right)\,du.
\end{align*}	
Let 
$$
H_1(z)=2^{1-2z}\int_0^\infty u^{2z}e^{u/4}(2\pi^2e^{2u}-3\pi e^u)e^{-\pi e^u}\,du
$$
and
$$
H_2(z)=2^{1-2z}\int_0^\infty u^{2z}e^{u/4}\sum_{k\ge2}(2\pi^2k^4e^{2u}-3\pi k^2e^u)e^{-\pi k^2e^u}\,du.
$$
Thus, $H(z)=H_1(z)+H_2(z)$.
We will estimate $H_1(z)$ and $H_2(z)$ separately in Sections 3.1.1 and 3.1.2.

\subsubsection{Estimate of $H_1(z)$}\label{subsec:EstH1}
We estimate $H_1(z)$ by the saddle-point method.
The integrand $u^{2z}e^{u/4}(2\pi^2e^{2u}-3\pi e^u)e^{-\pi e^u}$ equals
$$2\pi^2\exp\left({2z\operatorname{Log}u-\pi e^u+9u/4}\right)-3\pi \exp\left({2z\operatorname{Log}u-\pi e^u+5u/4}\right).
$$
Define
\begin{equation}\label{def:phi}
\phi(u)=2z\operatorname{Log}u-\pi e^u,\qquad
J_\beta(z)=\int_0^\infty e^{\phi(u)+\beta u}\,du,
\qquad \beta\in\left\{\frac94,\frac54\right\}.
\end{equation}
Then
\begin{equation}\label{H1-J}
H_1(z)=2^{1-2z}
\left(2\pi^2J_{9/4}(z)-3\pi J_{5/4}(z)\right).
\end{equation}
Clearly,
\begin{equation*}
\phi'(u)=\frac{2z}{u}-\pi e^u.
\end{equation*}
Thus 
$\phi'(u)=0$ if and only if $ue^u={2z}/{\pi}$.
Hence, 
\begin{equation}\label{saddle}
w=W(2z/\pi)
\end{equation}
is  the exact saddle of $\phi(u)$.

\smallskip
\noindent
\textbf{Contour deformation.}\par\nobreak\smallskip
In this part, we will give a contour deformation of 
$$
J_\beta(z)=\int_0^\infty e^{\phi(u)+\beta u}\,du,
\qquad \beta\in\left\{\frac94,\frac54\right\},
$$
in terms of the saddle point $w=a+ib$ of $\phi(u)=2z\operatorname{Log}u-\pi e^u$.
Let $\mathcal C_b$ be the segment from $0$ to $ib$, followed by the horizontal ray $s+ib$, $s\ge0$. 
We claim that 
\begin{equation}\label{deform}
J_\beta(z)
=\int_0^{ib}e^{\phi(u)+\beta u}\,du
+\int_0^\infty e^{\phi(s+ib)+\beta(s+ib)}\,ds.
\end{equation}
The horizontal ray passes through the saddle point $w=a+ib$.	
Put $y=s-a$. Then the saddle point $w$ corresponds to $y=0$ and
\begin{equation}\label{Jbeta}
J_\beta(z)=
\int_0^{ib}e^{\phi(u)+\beta u}\,du+\int_{-a}^\infty e^{\phi(w+y)+\beta(w+y)}\,dy.
\end{equation}
This is the desired contour deformation.

Now we prove~\eqref{deform}.
The region lies in the right half-plane, away
from the branch cut of $\operatorname{Log}u$. 
For $b>0$, take $0<\varepsilon<b$ and consider the
quarter-circle $C$ parametrized by
\begin{equation*}
u=\varepsilon e^{i\theta},
\qquad 0\le\theta\le\frac{\pi}{2}.
\end{equation*}
Since $e^{-\pi e^u+\beta u}$ is bounded near the origin,
we have, for $u$ in the quarter-circle,
\begin{equation*}
\left|e^{\phi(u)+\beta u}\right|=\left|e^{2z\Log u}\right| \left|e^{-\pi e^u+\beta u}\right|
=
\varepsilon^{2\operatorname{Re}z}
e^{-2\theta\operatorname{Im}z}
\left|e^{-\pi e^u+\beta u}\right|
\le C_z\varepsilon^{2\operatorname{Re}z},
\end{equation*}
where $C_z$ is independent of $\varepsilon$.
The length of $C$ is $\pi\varepsilon/2$, and hence
\begin{equation*}
\left|\int_C e^{\phi(u)+\beta u}\,du\right|
\le
\frac{\pi C_z}{2}\,
\varepsilon^{2\operatorname{Re}z+1}
\longrightarrow 0
\qquad (\varepsilon\to0^+),
\end{equation*}
because $\operatorname{Re}z>-1/2$.
For $b<0$, the same estimate holds for the quarter-circle
in the lower half-plane, parametrized by
$u=\varepsilon e^{i\theta}$ with $-\pi/2\le\theta\le0$.
When $b=0$, no contour deformation is needed.

For $R\ge1$, applying Cauchy's theorem to the finite contour
and letting $\varepsilon\to0^+$ gives
\begin{equation*}
\int_0^R e^{\phi(u)+\beta u}\,du
+\int_R^{R+ib} e^{\phi(u)+\beta u}\,du
=
\int_0^{ib} e^{\phi(u)+\beta u}\,du
+\int_0^R e^{\phi(s+ib)+\beta(s+ib)}\,ds.
\end{equation*}
On the right joining segment $u=R+it$, where
$\min\{0,b\}\le t\le\max\{0,b\}$, we have
$|t|\le |b|\le\pi/4$ and $|\Arg(R+it)|\le\pi/4$.
Consequently,
\begin{equation*}
\left|(R+it)^{2z}\right|
=
(R^2+t^2)^{\Re z}
e^{-2\Im z\,\Arg(R+it)}
\le C_z R^{2\Re z},
\qquad
C_z=(1+b^2)^{\Re z}e^{\pi|\Im z|/2},
\end{equation*}
where $C_z$ is independent of $R$ and $t$.
Since $\cos t\ge1/\sqrt2$, it follows that
\begin{equation*}
\left|
\int_R^{R+ib}e^{\phi(u)+\beta u}\,du
\right|
\le
|b|C_zR^{2\Re z}
\exp\!\left(\beta R-\frac{\pi}{\sqrt2}e^R\right)
\longrightarrow0
\qquad(R\to\infty).
\end{equation*}
Letting $R\to\infty$ in the finite contour identity
proves~\eqref{deform}.

\smallskip
\noindent
\textbf{Estimate of $J_\beta(z)$.}\par\nobreak\smallskip
We now use the expression~\eqref{Jbeta} to estimate $J_\beta(z)$ for $\beta\in\left\{9/4,5/4\right\}$.
Recall that 
\begin{equation*}
J_\beta(z)=
\int_0^{ib}e^{\phi(u)+\beta u}\,du+\int_{-a}^\infty e^{\phi(w+y)+\beta(w+y)}\,dy
\end{equation*}
and $y=0$ corresponds to the saddle point $w=a+ib$. We estimate the contributions from four parts of the
deformed contour:
\begin{itemize}
\item[\rm(i)]  \textbf{Gaussian main term and local error estimate.} $$I_G=\int_{-1/10}^{1/10} e^{\phi(w+y)+\beta(w+y)}\,dy;$$
\item[\rm(ii)] \textbf{horizontal tails} 
$$I_-=\int_{-a/2}^{-1/10} e^{\phi(w+y)+\beta(w+y)}\,dy,\qquad I_+=\int_{1/10}^{\infty} e^{\phi(w+y)+\beta(w+y)}\,dy;$$
\item[\rm(iii)] \textbf{the distant left interval}
$$I_d=\int_{-a}^{-a/2} e^{\phi(w+y)+\beta(w+y)}\,dy;$$
\item[\rm(iv)] \textbf{vertical segment}
$$
I_v=\int_0^{ib}e^{\phi(u)+\beta u}\,du
$$
\end{itemize}

\smallskip
\noindent
\textbf{Part(i): Gaussian main term and local error estimate.}
We estimate the integral
$$I_G=\int_{-1/10}^{1/10} e^{\phi(w+y)+\beta(w+y)}\,dy.$$
Recall that the horizontal ray passes through the saddle point $w$ (corresponding to $y=0$)
and $\phi'(w)=0$.
Taylor's formula at $y=0$ gives
\begin{equation}\label{taylor}
\phi(w+y)=\phi(w)+\frac{\phi''(w)}{2} y^2+\frac{\phi'''(w)}{6}y^3+R_4(y).
\end{equation}
Let $\kappa=-\phi''(w)$ and $c_3={\phi'''(w)}/{6}$. Then
\begin{equation}\label{kappa-first}
e^{\phi(w+y)+\beta(w+y)}=e^{\phi(w)+\beta w} e^{{-\kappa y^2}/{2}}e^{\beta y+c_3y^3+R_4(y)}.
\end{equation}
Let
$
X(y):=\beta y+c_3y^3+R_4(y).
$
Then
\begin{equation}\label{kappa-second}
e^{\phi(w+y)+\beta(w+y)}=e^{\phi(w)+\beta w} e^{{-\kappa y^2}/{2}}+e^{\phi(w)+\beta w} e^{{-\kappa y^2}/{2}}\left(e^{X(y)}-1\right)
\end{equation}
and
$$
I_G
=e^{\phi(w)+\beta w} \int_{-1/10}^{1/10} e^{{-\kappa y^2}/{2}} \,dy
+e^{\phi(w)+\beta w} \int_{-1/10}^{1/10} e^{{-\kappa y^2}/{2}}\left(e^{X(y)}-1\right) \,dy.
$$
We will estimate the two integrals on the right-hand side above.

\smallskip
\noindent
\textbf{\emph{The Gaussian main term.}}
For the first part,
define the Gaussian main term  by
\begin{equation}\label{Gaussmain}
M_\beta= e^{\phi(w)+\beta w}\int_{-\infty}^\infty e^{{-\kappa y^2}/{2}}\,dy.
\end{equation}
Then 
$$e^{\phi(w)+\beta w} \int_{-1/10}^{1/10} e^{{-\kappa y^2}/{2}} \,dy=
M_\beta-
e^{\phi(w)+\beta w} \int_{y\in\mathbb{R},|y|\ge 1/10} e^{{-\kappa y^2}/{2}} \,dy.$$
We will prove in Section 3.1.3 that $$2^{1-2z}2\pi^2M_{9/4}=H_*(z).$$
In each case (i)-(iv), we estimate the corresponding error contribution
relative to the Gaussian main term $M_\beta$.
Combining these estimates will then give a bound for
\begin{equation*}
\frac{|J_\beta(z)-M_\beta|}{|M_\beta|}
=
\left|\frac{J_\beta(z)}{M_\beta}-1\right|.
\end{equation*}

We now give a bound for $M_\beta$, and firstly
we estimate $\kappa$.
Recall that $we^w=2z/\pi$ and $w=a+ib$.
A direct computation gives
\begin{equation*}
\kappa=-\phi''(w)
=\frac{2z}{w^2}+\pi e^w
=\pi e^w\left(1+\frac1w\right)=\pi e^a(\cos b+i \sin b) \left(1+\frac{a-bi}{a^2+b^2}\right).
\end{equation*}
Since $a>6, |b|\le \pi/4$ by~\eqref{eq:aT} and~\eqref{b<pi/4},
we have $a \cos b\ge 0$ and $b\sin b\ge 0$. Hence,
\begin{equation*}
\Re\kappa=\pi e^a\left(\cos b+\frac{a\cos b+b\sin b}{a^2+b^2}\right)\ge \pi e^a \cos b\ge \frac{\pi e^a}{\sqrt 2}.
\end{equation*}
Recall that by~\eqref{1009}, ${|w|}/{a}<1.009$
and by~\eqref{eq:xi-saddle-parameters},
$\pi e^a={2aT}/{|w|}$.
We obtain that 
\begin{equation}\label{eq:xi-saddle-curvature}
\Re\kappa
\ge \frac{\pi e^a}{\sqrt{2}}
= \sqrt{2}\,T\frac{a}{|w|}
> \frac{\sqrt{2}}{1.009}\,T
> 1.4T.
\end{equation}
Similarly, 
\begin{equation}\label{kappa}
|\kappa|=\pi|e^w|\left|1+\frac{1}{w}\right|\le\pi e^a \left(1+\frac{1}{|w|}\right)\le 2T\left(1+\frac{1}{6}\right)=\frac73T.
\end{equation}
Thus we have 
\begin{equation}\label{bound-kappa}
1.4T<|\kappa|\le 7T/3.
\end{equation}

Since $\operatorname{Re}\kappa>0$, the complex Gaussian
integral formula \cite[Appendix A, Theorem 1]{Fol89}
gives
\begin{equation}\label{eq:G}
G:=\int_{-\infty}^{\infty}e^{-\kappa y^2/2}\,dy
=\sqrt{2\pi}\,\kappa^{-1/2},
\end{equation}
where $\kappa^{1/2}$ denotes the principal square root,
which is positive for positive real $\kappa$.
Since $|\kappa|\le 7T/3$ by~\eqref{kappa},
\begin{equation}\label{low-G}
|G|
=
\frac{\sqrt{2\pi}}{\sqrt{|\kappa|}}
\ge
\sqrt{\frac{6\pi}{7}}\,T^{-1/2}>1.64\,T^{-1/2}.
\end{equation}
Then $$M_\beta=
e^{\phi(w)+\beta w}G=\sqrt{2\pi}\,\kappa^{-1/2}\,e^{\phi(w)+\beta w}$$ and
\begin{equation*}
|M_\beta|
=
e^{\Re \phi(w)+\beta a} |G| \ge 
e^{\Re \phi(w)+\beta a}\sqrt{\frac{6\pi}{7}}\,T^{-1/2}\ge 1.64 e^{\Re \phi(w)+\beta a}T^{-1/2}.
\end{equation*}

We next estimate 
$$
M_\beta-
e^{\phi(w)+\beta w} \int_{y\in\mathbb{R},|y|\ge 1/10} e^{{-\kappa y^2}/{2}} \,dy.
$$
By~\eqref{eq:G}, we have 
\begin{equation*}
\varepsilon_G(z)
:=
\frac{1}{G}
\int_{-1/10}^{1/10}e^{-\kappa y^2/2}\,dy-1
=
-\frac{1}{G}
\int_{|y|\ge 1/10}e^{-\kappa y^2/2}\,dy.
\end{equation*}
Since $M_\beta=e^{\phi(w)+\beta w}G$, we obtain
\begin{equation}\label{eq:xi-gaussian-truncation}
e^{\phi(w)+\beta w}
\int_{-1/10}^{1/10}e^{-\kappa y^2/2}\,dy
=
M_\beta\bigl(1+\varepsilon_G(z)\bigr).
\end{equation}
By~\eqref{eq:xi-saddle-curvature} and~\eqref{low-G}, we have 
$
\operatorname{Re}\kappa\ge 1.4T,
$ and 
$|G|\ge 1.64T^{-1/2}$.
Consequently,
\begin{align}\label{epsilonG}
|\varepsilon_G(z)|
&\le
\frac{2\sqrt{T}}{1.64}
\int_{1/10}^{\infty}e^{-0.7Ty^2}\,dy
=
\frac{2\sqrt{T}}{1.64}
\int_0^{\infty}e^{-0.7T(1/10+t)^2}\,dt\nonumber\\
&\le
\frac{2\sqrt{T}}{1.64}\,e^{-0.007T}
\int_0^{\infty}e^{-0.14Tt}\,dt
=
\frac{2}{0.14\cdot1.64}
\frac{e^{-0.007T}}{\sqrt{T}}
\le
\frac{8.8e^{-0.007T}}{\sqrt{T}}.
\end{align}

\smallskip
\noindent
\textbf{\emph{The local error estimate.}}
Now we bound 
$$e^{\phi(w)+\beta w} \int_{-1/10}^{1/10} e^{{-\kappa y^2}/{2}}\left(e^{X(y)}-1\right) \,dy,$$
where $X(y)=\beta y+c_3y^3+R_4(y)$, $c_3={\phi'''(w)}/{6}$ and $\beta\in\{9/4,5/4\}$. 

Since $\phi(u)=2z\operatorname{Log}u-\pi e^u$, we have
\begin{equation*}
\phi'''(u)=\frac{4z}{u^3}-\pi e^u,
\qquad
\phi^{(4)}(u)=-\frac{12z}{u^4}-\pi e^u.
\end{equation*}
Using $|w|\ge a\ge6$, $|z|=aT$, and
$\pi e^a\le2T$ by~\eqref{piea}, we obtain
\begin{equation}\label{upper-c3}
|c_3|
=\frac{|\phi'''(w)|}{6}
\le
\frac16\left(\frac{4|z|}{|w|^3}+\pi e^a\right)
\le
\frac16\left(\frac4{a^2}+2\right)T
\le
\frac{19}{54}T
<0.4T.
\end{equation}

To control the Taylor remainder, we estimate the fourth
derivative uniformly on the local segment.
For real $|y|\le1/10$,
\begin{equation*}
|w+y|\ge a-0.1,
\qquad
|e^{w+y}|=e^{a+y}\le e^{a+0.1}.
\end{equation*}
Hence
\begin{equation*}
\begin{aligned}
|\phi^{(4)}(w+y)|
\le
\frac{12|z|}{|w+y|^4}+\pi|e^{w+y}|\le
T\left(\frac{12a}{(a-0.1)^4}+2e^{0.1}\right).
\end{aligned}
\end{equation*}
The function $a/(a-0.1)^4$ is decreasing for $a\ge6$, since
\begin{equation*}
\frac{d}{da}\left(\frac{a}{(a-0.1)^4}\right)
=
-\frac{3a+0.1}{(a-0.1)^5}<0.
\end{equation*}
Consequently, for every real $y$ with $|y|\le1/10$,
\begin{equation*}
|\phi^{(4)}(w+y)|
\le
T\left(\frac{72}{5.9^4}+2e^{0.1}\right)
<2.4T.
\end{equation*}

Taylor's formula~\eqref{taylor}, together with
$\phi'(w)=0$, yields
\begin{equation}\label{eq:xi-saddle-taylor}
\phi(w+y)=\phi(w)-\frac{\kappa y^2}{2}+c_3y^3+R_4(y),
\qquad |c_3|\le0.4T,\quad |R_4(y)|\le0.1T|y|^4.
\end{equation}
Indeed, $c_3=\phi'''(w)/6$, and the integral remainder is
\begin{equation*}
R_4(y)=\frac{y^4}{6}\int_0^1(1-t)^3\phi^{(4)}(w+ty)\,dt,
\end{equation*}
so its modulus is at most $2.4T|y|^4/24$.

Recall that  $\beta\in\left\{9/4,5/4\right\}$ and 
$
X(y)=\beta y+c_3y^3+R_4(y).
$
For $|y|\le0.1$, \eqref{eq:xi-saddle-taylor} gives
\begin{equation*}
|X(y)|\le \beta |y|+0.4 T |y|^3+0.1T|y|^4\le\beta|y|+0.041Ty^2.
\end{equation*}
Since $T>0$ and $y$ is real, we have
\begin{equation*}
0\le (0.1T|y|-\beta)^2
=0.01T^2y^2-0.2T\beta|y|+\beta^2.
\end{equation*}
Dividing by $0.2T$ and rearranging yields
\begin{equation*}
\beta|y|
\le 0.05Ty^2+\frac{\beta^2}{0.2T},\qquad 
|X(y)|
\le \beta|y|+0.041Ty^2
\le 0.091Ty^2+\frac{\beta^2}{0.2T}.
\end{equation*}
Because $\beta^2\le81/16$, $T>630$, and
$e^{(81/16)/(0.2\cdot630)}<1.2$, we have 
\begin{equation}\label{expon}
e^{|X(y)|}\le e^{0.091Ty^2+\frac{\beta^2}{0.2T}}\le1.2e^{0.091Ty^2}.
\end{equation}
By~\eqref{eq:xi-saddle-taylor}, $|c_3|\le0.4T$, and  $|R_4(y)|\le0.1T|y|^4$. Then $(A+B+C)^2\le3(A^2+B^2+C^2)$ gives
\begin{equation}\label{three-summand}
|X(y)|^2\le3\beta^2y^2+0.48T^2y^6+0.03T^2y^8.
\end{equation}
Since
\begin{equation*}
e^X-1-\beta y-c_3y^3=(e^X-1-X)+R_4(y),
\qquad |e^X-1-X|\le\tfrac12|X|^2e^{|X|},
\end{equation*}
by~\eqref{eq:xi-saddle-taylor},~\eqref{expon} and~\eqref{three-summand}, we obtain that
\begin{align}\label{eq:xi-saddle-exp-remainder}
|e^{X(y)}-1-\beta y-c_3y^3|&\le \tfrac12|X|^2e^{|X|}+|R_4(y)|\nonumber\\
&\le 0.6e^{0.091Ty^2}
\bigl(3\beta^2y^2+0.48T^2y^6+0.03T^2y^8\bigr)+0.1Ty^4.
\end{align}

Let
\begin{equation*}
E_{\mathrm{loc},\beta}
=\int_{-1/10}^{1/10}
e^{-\kappa y^2/2}\bigl(e^{X(y)}-1\bigr)\,dy.
\end{equation*}
Since $y$ is real and the interval is symmetric,
$$
\int_{-1/10}^{1/10}
e^{-\kappa y^2/2}\beta y\,dy=0,\qquad \int_{-1/10}^{1/10}
e^{-\kappa y^2/2}c_3y^3\,dy=0
$$
Note that by~\eqref{eq:xi-saddle-curvature},
$\Re \kappa\ge 1.4T$. Then $|e^{-\kappa y^2/2}|\le e^{-0.7Ty^2}$.
By~\eqref{eq:xi-saddle-exp-remainder}, we have 
\begin{align*}
|E_{\mathrm{loc},\beta}|
&=\left|\int_{-1/10}^{1/10}
e^{-\kappa y^2/2}\bigl(e^{X(y)}-1-\beta y-c_3y^3\bigr)\,dy\right|\\
&\le 0.1T\int_{\mathbb R}y^4e^{-0.7Ty^2}\,dy
+0.6\int_{\mathbb R}e^{-0.6Ty^2}
\bigl(3\beta^2y^2+0.48T^2y^6+0.03T^2y^8\bigr)\,dy.
\end{align*}
Here $0.7-0.091=0.609>0.6$. 
Recall that by~\eqref{low-G}, $|G|\ge \sqrt{6\pi/7}\cdot T^{-1/2}$.
In the following, we estimate $|E_{\mathrm{loc},\beta}|/|G|$.

For $v>0$ and integers $r\ge0$, the Gaussian moment
formula \cite[Eq.~(5.1)]{Mol07}, together with
the evenness of the integrand, gives
\begin{equation*}
\int_{\mathbb R}y^{2r}e^{-vy^2}\,dy
=
2\int_0^\infty y^{2r}e^{-vy^2}\,dy
=
\frac{(2r-1)!!\sqrt{\pi}}{2^r v^{r+1/2}},
\end{equation*}
where $(-1)!!=1$.
Using the Gaussian moment formula
and $\beta^2\le81/16$ (since $\beta\in\{9/4,5/4\}$), we obtain
\begin{align*}
\frac{0.1T}{|G|}
\int_{\mathbb R} y^4e^{-0.7Ty^2}\,dy
&\le
\frac{0.1T}{\sqrt{6\pi/7}\cdot T^{-1/2}}\,
\frac{3\sqrt{\pi}}{4(0.7T)^{5/2}}=
\frac{0.1\cdot3\sqrt{\pi}}
{4(0.7)^{5/2}\sqrt{6\pi/7}}\,\frac{1}{T};\\
\frac{0.6\cdot3\beta^2}{|G|}
\int_{\mathbb R} y^2e^{-0.6Ty^2}\,dy
&\le
\frac{0.6\cdot3\beta^2}{\sqrt{6\pi/7}\cdot T^{-1/2}}\,
\frac{\sqrt{\pi}}{2(0.6T)^{3/2}}
\le
\frac{0.6\cdot3(81/16)}{\sqrt{6\pi/7}\cdot T^{-1/2}}\,
\frac{\sqrt{\pi}}{2(0.6T)^{3/2}}\\
&=
\frac{0.6\cdot3(81/16)\sqrt{\pi}}
{2(0.6)^{3/2}\sqrt{6\pi/7}}\,\frac{1}{T};\\
\frac{0.6\cdot0.48T^2}{|G|}
\int_{\mathbb R} y^6e^{-0.6Ty^2}\,dy
&\le
\frac{0.6\cdot0.48T^2}{\sqrt{6\pi/7}\cdot T^{-1/2}}\,
\frac{15\sqrt{\pi}}{8(0.6T)^{7/2}}
=
\frac{0.6\cdot0.48\cdot15\sqrt{\pi}}
{8(0.6)^{7/2}\sqrt{6\pi/7}}\,\frac{1}{T}\\
\frac{0.6\cdot0.03T^2}{|G|}
\int_{\mathbb R} y^8e^{-0.6Ty^2}\,dy
&\le
\frac{0.6\cdot0.03T^2}{\sqrt{6\pi/7}\cdot T^{-1/2}}\,
\frac{105\sqrt{\pi}}{16(0.6T)^{9/2}}=
\frac{0.6\cdot0.03\cdot105\sqrt{\pi}}
{16(0.6)^{9/2}\sqrt{6\pi/7}}\,\frac{1}{T^2}.
\end{align*}
We conclude that
\begin{equation}\label{eq:xi-saddle-local-error}
\frac{|E_{\mathrm{loc},\beta}|}{|G|}
\le\frac{14.4}{T}+\frac{1.3}{T^2}.
\end{equation}
Combining the Gaussian truncation estimate~\eqref{eq:xi-gaussian-truncation},~\eqref{epsilonG} with~\eqref{eq:xi-saddle-local-error},
we obtain the following relative error bound for $I_G$:
\begin{equation}\label{bound:IG}
\left|\frac{I_G}{M_\beta}-1\right|
\le |\varepsilon_G(z)|+\frac{|E_{\mathrm{loc},\beta}|}{|G|}
\le \frac{8.8e^{-0.007T}}{\sqrt{T}}
+\frac{14.4}{T}+\frac{1.3}{T^2}.
\end{equation}

\smallskip
\noindent
\textbf{Part(ii): Horizontal tails.}
Let
\begin{equation*}
I_-:=\int_{-a/2}^{-1/10}
e^{\phi(w+y)+\beta(w+y)}\,dy,
\qquad
I_+:=\int_{1/10}^{\infty}
e^{\phi(w+y)+\beta(w+y)}\,dy.
\end{equation*}
In this part, we estimate $I_-$ and $I_+$.

Let
$
\Psi(y):=\Re\bigl(\phi(w+y)-\phi(w)\bigr).
$
Since $w=a+ib$ and $\beta,y\in\mathbb R$, we have 
\begin{align}\label{estimate-main}
\left|e^{\phi(w+y)+\beta(w+y)}\right|
&=
e^{\operatorname{\Re}\left(\phi(w+y)+\beta(w+y)\right)}
=
e^{\operatorname{\Re}\phi(w+y)+\beta(a+y)}\nonumber\\
&=
e^{\operatorname{\Re}\phi(w)+\Psi(y)+\beta a+\beta y}
=
e^{\operatorname{\Re}\phi(w)+\beta a}
e^{\Psi(y)+\beta y}.
\end{align}
We will derive an upper bound of $\Psi(y)+\beta y$.

We first establish the concavity needed to control
the horizontal tails. For real $y\ge-a/2$,
\begin{equation*}
w+y=(a+y)+ib,
\qquad a+y\ge a/2>0.
\end{equation*}
Recall that $\vartheta=\arg z$ and $|\vartheta|\le \pi/4$.
If $b\ge0$,  then by~\eqref{claim:proof 3.1}, $b\le \vartheta$
and
\begin{equation*}
\begin{aligned}
0\le 2\arg(w+y)
&=2\arctan\frac{b}{a+y}
\le \frac{2b}{a+y}
\le \frac{4b}{a}
\le \frac{2}{3}\vartheta,
\end{aligned}
\end{equation*}
where we used $\arctan t\le t$ for $t\ge0$,
$a>6$.
Consequently,
\begin{equation}\label{aug}
\arg\left(\frac{2z}{(w+y)^2}\right)
=
\vartheta-2\arg(w+y)
\in
\left[\frac{\vartheta}{3},\vartheta\right]
\subset
\left[0,\frac{\pi}{4}\right].
\end{equation}
Hence
\begin{equation*}
\operatorname{\Re}\frac{2z}{(w+y)^2}>0.
\end{equation*}
For $b<0$, the same positivity follows by complex
conjugation. Since $|b|\le\pi/4$, we also have
$\cos b>0$. Therefore, for $y\ge-a/2$, $\phi(u)=2z\operatorname{Log}u-\pi e^u$ implies that 
\begin{equation*}
\begin{aligned}
\frac{d^2}{dy^2}{\Re}\phi(w+y)
=\operatorname{\Re}\phi''(w+y)
=-\operatorname{\Re}\frac{2z}{(w+y)^2}
-\pi e^{a+y}\cos b
<0.
\end{aligned}
\end{equation*}
Thus $\operatorname{\Re}\phi(w+y)$ is strictly concave
on $[-a/2,\infty)$.

Since $
\Psi(y):=\Re\bigl(\phi(w+y)-\phi(w)\bigr)
$, we have  $\Psi(0)=\Psi'(0)=0$. On $|y|\le 1/10$, since $|b|\le \pi/4$,  by~\eqref{aug}, we obtain
\begin{equation*}
\begin{aligned}
-\Psi''(y)
=\Re\frac{2z}{(w+y)^2}+\pi e^{a+y}\cos b\ge \pi e^{a+y}\cos b
\ge\frac{\pi e^{a-0.1}}{\sqrt2}
>\frac{\sqrt2e^{-0.1}}{1.009}T>1.26T,
\end{aligned}
\end{equation*}
where we use $\pi e^a\ge 2T/1.009$ by~\eqref{1009} and~\eqref{eq:xi-saddle-parameters}.
For $0\le y\le 1/10$, integrating from $0$ to $y$ yields
\begin{equation*}
\begin{aligned}
\Psi'(y)
=\Psi'(0)+\int_0^y\Psi''(t)\,dt
\le -1.26T\int_0^y dt
=-1.26Ty.
\end{aligned}
\end{equation*}
Integrating once more, we obtain
\begin{equation*}
\begin{aligned}
\Psi(y)
=\Psi(0)+\int_0^y\Psi'(t)\,dt
\le -1.26T\int_0^y t\,dt
=-\frac{1.26T}{2}y^2.
\end{aligned}
\end{equation*}
Similarly,
for $-1/10\le y\le0$,  we have 
\begin{equation*}
\Psi'(y)\ge -1.26Ty,\qquad \Psi(y)\le -\frac{1.26T}{2}y^2.
\end{equation*}
Evaluating these bounds at $y=\pm 1/10$, we conclude that
\begin{equation*}
\begin{aligned}
\Psi(\pm 1/10)
\le-0.0063T,\qquad
\Psi'(1/10)
\le-0.126T,\qquad 
\Psi'(-1/10)
\ge0.126T.
\end{aligned}
\end{equation*}

We have proved that $\operatorname{Re}\phi(w+y)$ is strictly concave
on $[-a/2,\infty)$.
Hence
\begin{equation}\label{psit}
\Psi(y)\le-0.0063T-0.126T(|y|-1/10)
\end{equation}	on both horizontal tails $y\in[-a/2,-1/10]\cup[1/10,\infty)$.
Writing $t=|y|-1/10\ge0$ and using $\beta\le2.25$, we obtain
\begin{equation*}
\begin{aligned}
\beta y
\le \beta|y|
\le 2.25|y|
=2.25\left(t+\frac{1}{10}\right)
=2.25t+0.225.
\end{aligned}
\end{equation*}
Combining these two inequalities, we obtain
\begin{equation}\label{psi+beta}
\begin{aligned}
\Psi(y)+\beta y
\le -0.0063T-0.126Tt+2.25t+0.225
=0.225-0.0063T-(0.126T-2.25)t.
\end{aligned}
\end{equation}

Now we are ready to estimate $I_-$ and $I_+$.
Recall that
$
M_\beta=e^{\phi(w)+\beta w}G,
$
and
\begin{equation*}
|M_\beta|
=
e^{\operatorname{\Re}\phi(w)+\beta a}|G|.
\end{equation*}
Consequently, 
\begin{equation*}
\begin{aligned}
|I_-|
&=
\left|
\int_{-a/2}^{-1/10}
e^{\phi(w+y)+\beta(w+y)}\,dy
\right|
\le
\int_{-a/2}^{-1/10}
\left|e^{\phi(w+y)+\beta(w+y)}\right|\,dy\\
&=
e^{\operatorname{\Re}\phi(w)+\beta a}
\int_{-a/2}^{-1/10}e^{\Psi(y)+\beta y}\,dy.
\end{aligned}
\end{equation*}
Similarly, for the right-tail integral,
\begin{equation*}
\begin{aligned}
|I_+|
=
\left|
\int_{1/10}^{\infty}
e^{\phi(w+y)+\beta(w+y)}\,dy
\right|
\le
e^{\operatorname{\Re}\phi(w)+\beta a}
\int_{1/10}^{\infty}e^{\Psi(y)+\beta y}\,dy.
\end{aligned}
\end{equation*}
Hence, 
\begin{align}\label{I-I+}
\frac{|I_-|+|I_+|}{|M_\beta|}
&\le
\frac{e^{\operatorname{\Re}\phi(w)+\beta a}}
{e^{\operatorname{\Re}\phi(w)+\beta a}|G|}
\left(
\int_{-a/2}^{-1/10}e^{\Psi(y)+\beta y}\,dy
+
\int_{1/10}^{\infty}e^{\Psi(y)+\beta y}\,dy
\right)\nonumber\\
&=
\frac{1}{|G|}
\left(
\int_{-a/2}^{-1/10}e^{\Psi(y)+\beta y}\,dy
+
\int_{1/10}^{\infty}e^{\Psi(y)+\beta y}\,dy
\right).
\end{align}

On both tail intervals, put $t=|y|-1/10\ge0$.
By~\eqref{psit} and~\eqref{psi+beta},
\begin{equation*}
\Psi(y)\le-0.0063T-0.126Tt,\qquad 
e^{\Psi(y)+\beta y}
\le
e^{0.225-0.0063T}
e^{-(0.126T-2.25)t}.
\end{equation*}
For the right tail, the substitution $t=y-1/10$
gives
\begin{equation*}
\begin{aligned}
\int_{1/10}^{\infty}e^{\Psi(y)+\beta y}\,dy
&=
\int_0^\infty
e^{\Psi(t+1/10)+\beta(t+1/10)}\,dt
\le
e^{0.225-0.0063T}
\int_0^\infty e^{-(0.126T-2.25)t}\,dt.
\end{aligned}
\end{equation*}
For the left tail, use $t=-y-1/10$.
After reversing the limits of integration, we obtain
\begin{equation*}
\begin{aligned}
\int_{-a/2}^{-1/10}e^{\Psi(y)+\beta y}\,dy
&=
\int_0^{a/2-1/10}
e^{\Psi(-t-1/10)-\beta(t+1/10)}\,dt\\
&\le
e^{0.225-0.0063T}
\int_0^{a/2-1/10}
e^{-(0.126T-2.25)t}\,dt\\
&\le
e^{0.225-0.0063T}
\int_0^\infty
e^{-(0.126T-2.25)t}\,dt.
\end{aligned}
\end{equation*}

Since $T>630$, we have $0.126T-2.25>0$. Therefore,
\begin{equation*}
\int_0^\infty e^{-(0.126T-2.25)t}\,dt
=
\frac{1}{0.126T-2.25}.
\end{equation*}
By~\eqref{I-I+}, combining the bounds for the two tails yields
\begin{equation*}
\begin{aligned}
\frac{|I_-|+|I_+|}{|M_\beta|}
\le
\frac{2e^{0.225-0.0063T}}{|G|}
\int_0^\infty e^{-(0.126T-2.25)t}\,dt
=
\frac{2e^{0.225-0.0063T}}
{|G|(0.126T-2.25)}.
\end{aligned}
\end{equation*}
By~\eqref{low-G},
$
|G|\ge \sqrt{6\pi/7}\cdot T^{-1/2}>1.64 T^{-1/2}.
$
Since $T>630$, we conclude that
\begin{align}\label{bound:I-I+}
\frac{|I_-|+|I_+|}{|M_\beta|}
&\le
\frac{2e^{0.225-0.0063T}\sqrt{T}}
{1.64\cdot(0.126T-2.25)}
=
\frac{2e^{0.225}}
{1.64\cdot(0.126-2.25/T)}
\frac{e^{-0.0063T}}{\sqrt{T}}\nonumber\\
&\le \frac{2e^{0.225}}
{1.64(0.126-2.25/630)}\frac{e^{-0.0063T}}{\sqrt{T}}\le\frac{12.6\,e^{-0.0063T}}{\sqrt{T}}.
\end{align}

\smallskip
\noindent
\textbf{Part(iii): Distant left interval.}
In this part, we estimate
$$I_d=\int_{-a}^{-a/2} e^{\phi(w+y)+\beta(w+y)}\,dy.$$
Since $w=a+ib$ and $\beta,y\in\mathbb R$, we have
\begin{equation*}
\begin{aligned}
\left|e^{\phi(w+y)+\beta(w+y)}\right|
&=
e^{\operatorname{\Re}(\phi(w+y)+\beta(w+y))}=
e^{\operatorname{\Re}\phi(w+y)+\beta(a+y)}.
\end{aligned}
\end{equation*}
Moreover, $M_\beta=e^{\phi(w)+\beta w}G$ implies
$
|M_\beta|
=
e^{\operatorname{\Re}\phi(w)+\beta a}|G|.
$
This gives
\begin{equation*}
\begin{aligned}
\frac{\left|e^{\phi(w+y)+\beta(w+y)}\right|}
{|M_\beta|}
&=
\frac{1}{|G|}
e^{\operatorname{Re}(\phi(w+y)-\phi(w))+\beta y}.
\end{aligned}
\end{equation*}
Since $M_\beta$ and $G$ are independent of $y$,
integration over $[-a,-a/2]$ yields
\begin{equation}\label{Id-G}
\left|\frac{I_d}{M_\beta}\right|\le\frac{1}{|M_\beta|}
\int_{-a}^{-a/2}
\left|e^{\phi(w+y)+\beta(w+y)}\right|\,dy
=
\frac{1}{|G|}
\int_{-a}^{-a/2}
e^{\operatorname{\Re}(\phi(w+y)-\phi(w))+\beta y}\,dy.
\end{equation}

Now we estimate $\operatorname{\Re}(\phi(w+y)-\phi(w))$.
For $-a\le y\le -a/2$, we have $0\le a+y\le a/2$.
Since $a\ge 6$ and $|b|\le \pi/4$, it follows that
\begin{align}\label{0.52}
\frac{|w+y|}{|w|}
&=
\frac{\sqrt{(a+y)^2+b^2}}{\sqrt{a^2+b^2}}\le \sqrt{\frac{a^2/4+b^2}{a^2+b^2}}\le
\sqrt{\frac14+\frac{b^2}{a^2}}
\le
\sqrt{\frac14+\frac{\pi^2}{576}}
<0.52.
\end{align}

Recall that $\phi(u)=2z\operatorname{Log}u-\pi e^u$
and $w=a+ib$. For real $y\in[-a,-a/2]$ with $w+y\ne0$,
the principal logarithm gives
\begin{equation*}
\operatorname{Log}(w+y)-\operatorname{Log}w
=
\log\frac{|w+y|}{|w|}
+i\bigl(\arg(w+y)-\arg w\bigr).
\end{equation*}
Consequently,
\begin{align*}
\phi(w+y)-\phi(w)
&=
2z\bigl(\operatorname{Log}(w+y)-\operatorname{Log}w\bigr)
+\pi e^w(1-e^y)
\\
&=
2z\log\frac{|w+y|}{|w|}
+2iz\bigl(\arg(w+y)-\arg w\bigr)
+\pi e^a(\cos b+i\sin b)(1-e^y).
\end{align*}
Since $y$ is real, the quantities
$\log(|w+y|/|w|)$, $\arg(w+y)-\arg w$, and $1-e^y$
are all real. Taking real parts and using
$\Re(iz)=-\Im z$, we obtain
\begin{align*}
\Re\bigl(\phi(w+y)-\phi(w)\bigr)
=
2\Re z\log\frac{|w+y|}{|w|}
-2\Im z\bigl(\arg(w+y)-\arg w\bigr)
+\pi e^a\cos b\,(1-e^y).
\end{align*}

We claim that the argument term
$
-2\Im z\bigl(\arg(w+y)-\arg w\bigr)
$
is nonpositive.
Indeed, by~\eqref{claim:proof 3.1},
\begin{equation*}
\vartheta=b+\arctan(b/a).
\end{equation*}
Thus $\vartheta$ has the same sign as $b$.
Since $|\vartheta|\le\pi/4$, the same is true of
$\operatorname{\Im}z=|z|\sin\vartheta$.
For $-a<y\le-a/2$, we have $0<a+y\le a/2$. Hence
\begin{equation*}
\arg(w+y)-\arg w
=
\arctan\frac{b}{a+y}-\arctan\frac{b}{a}.
\end{equation*}
By the strict monotonicity of $\arctan$, this difference
also has the same sign as $b$. Consequently,
\begin{equation*}
-2\operatorname{\Im}z
\bigl(\arg(w+y)-\arg w\bigr)\le0.
\end{equation*}

When $b\ne0$, the same inequality holds at $y=-a$
by continuity.
When $b=0$, the argument term vanishes for $y>-a$,
and the integrand $e^{\phi(w+y)+\beta(w+y)}$
extends continuously to zero at $y=-a$.

Moreover, since $y<0$ and $\cos b>0$, the saddle relation
$\pi e^a|w|=2|z|$ gives
\begin{equation*}
0
\le
\pi e^a\cos b\,(1-e^y)
\le
\pi e^a
=
\frac{2|z|}{|w|}
\le
\frac{2|z|}{a}.
\end{equation*}
Consequently, using $\Re z\ge |z|/\sqrt{2}$ (since $|\vartheta|\le \pi/4$),  and $a\ge 6$, 
by~\eqref{0.52}, we obtain
\begin{align}\label{0.58}
\Re\bigl(\phi(w+y)-\phi(w)\bigr)
&\le
2\Re z\log(0.52)+\frac{2|z|}{a}
\le
\sqrt{2}\,|z|\log(0.52)+\frac{2|z|}{a}
\nonumber\\
&\le
|z|\left(\sqrt{2}\log(0.52)+\frac13\right)<-0.58|z|.
\end{align}

Since $\beta>0$ and $y<0$,
it follows by~\eqref{Id-G} and~\eqref{0.58} that 
\begin{align*}
\left|\frac{I_d}{M_\beta}\right|\le
\frac{1}{|G|}\int_{-a}^{-a/2}e^{\operatorname{\Re}(\phi(w+y)-\phi(w))+\beta y}\,dy\le 
\frac{1}{|G|}\int_{-a}^{-a/2}e^{\operatorname{\Re}(\phi(w+y)-\phi(w))}\,dy
\end{align*}
By~\eqref{low-G},
$
|G|\ge1.64T^{-1/2},
$
we obtain
\begin{align*}
\left|\frac{I_d}{M_\beta}\right|\le
\frac{a}{2|G|}e^{-0.58|z|}\le
\frac{a\sqrt{T}}{2\cdot1.64}e^{-0.58|z|}=
\frac{|z|}{3.28\sqrt{T}}e^{-0.58|z|}
<
|z|e^{-0.58|z|},
\end{align*}
where the last inequality uses $T>630$.

Since $|z|\ge4000$ and $T\le |z|/6$, the elementary inequality
$e^x>x^3/6$ for $x>0$ gives
\begin{equation*}
20|z|e^{-0.58|z|}
<e^{-|z|/2}
<\frac{1}{100T}.
\end{equation*}
Consequently,
\begin{align}\label{bound:Id}
\left|\frac{I_d}{M_\beta}\right|=\frac{
\left|
\displaystyle\int_{-a}^{-a/2}
e^{\phi(w+y)+\beta(w+y)}\,dy
\right|
}{|M_\beta|}
<
\frac{1}{100T}.
\end{align}

\smallskip
\noindent
\textbf{Part(iv): Vertical segment.}
In this part, we estimate
$$
I_v=\int_0^{ib}e^{\phi(u)+\beta u}\,du
$$

If $b=0$, the vertical contribution vanishes.
Otherwise, parametrize the segment by $u=it$, where
$t$ runs from $0$ to $b$. Then $du=i\,dt$ and
$|t|\le |b|\le\pi/4$. By the definition
$\phi(u)=2z\operatorname{Log}u-\pi e^u$, we have
\begin{equation}\label{argit}
\int_0^{ib}e^{\phi(u)+\beta u}\,du
=
i\int_0^b
\exp\!\left(
2z\operatorname{Log}(it)-\pi e^{it}+i\beta t
\right)\,dt.
\end{equation}

For $t\ne0$, the principal logarithm satisfies
\begin{equation*}
\operatorname{Log}(it)
=
\log|t|+i\arg(it),
\qquad
\arg(it)=
\begin{cases}
\pi/2, & t>0,\\
-\pi/2, & t<0.
\end{cases}
\end{equation*}
As shown above, $b$ and $\Im z$ have the same sign.
Since every nonzero $t$ on the segment has the same
sign as $b$, it follows that
\begin{equation*}
2\Im z\,\arg(it)=\pi|\Im z|.
\end{equation*}
Using $e^{it}=\cos t+i\sin t$ and
$\Re(i\beta t)=0$, we obtain
\begin{align*}
\Re\bigl(\phi(it)+i\beta t\bigr)
&=
\Re\!\left(
2z\operatorname{Log}(it)-\pi e^{it}+i\beta t
\right)
\\
&=
2\Re z\log|t|
-2\Im z\,\arg(it)
-\pi\cos t
\\
&=
2\Re z\log|t|
-\pi|\Im z|
-\pi\cos t.
\end{align*}
Consequently,
\begin{equation*}
\left|e^{\phi(it)+i\beta t}\right|
=
\exp\!\left(
2\Re z\log|t|-\pi|\Im z|-\pi\cos t
\right).
\end{equation*}

Since $\Re z>0$ and
$0<|t|\le |b|\le\pi/4<1$, we have
$2\Re z\log|t|\le0$.
Moreover, $\cos t\ge1/\sqrt{2}>0$, so all three
terms in the last exponent are nonpositive.
Thus,
\begin{equation*}
\left|e^{\phi(it)+i\beta t}\right|\le1.
\end{equation*}
As $t$ approaches $0$ along the segment, this modulus
tends to zero because $\Re z>0$.
The endpoint $u=0$ is therefore interpreted by a limit.

Finally, applying the triangle inequality and using
the length $|b|$ of the segment, we obtain
\begin{align*}
\left|
\int_0^{ib}e^{\phi(u)+\beta u}\,du
\right|
&=
\left|
i\int_0^b e^{\phi(it)+i\beta t}\,dt
\right|
\le
\int_{\min\{0,b\}}^{\max\{0,b\}}
\left|e^{\phi(it)+i\beta t}\right|\,dt
\\
&\le
\int_{\min\{0,b\}}^{\max\{0,b\}}1\,dt
=
|b|
\le\frac{\pi}{4}
<1.
\end{align*}

Using $|\arg w|\le\pi/24$, $|w|\ge6$,
$\pi e^a\le|z|/3$, and the sector condition, we obtain
\begin{equation*}
\operatorname{Re}\phi(w)
\ge
|z|\left(
\sqrt2\log6-\frac{\sqrt2\pi}{24}-\frac13
\right)
>2|z|.
\end{equation*}
Therefore,
\begin{equation*}
|M_\beta|
=
e^{\operatorname{Re}\phi(w)+\beta a}|G|
>
\frac{e^{2|z|}}{\sqrt T}
>
\frac{2|z|^2}{\sqrt T}
\ge72T^{3/2}
>100T,
\end{equation*}
where we used $\beta>0$, $|G|\ge1.64T^{-1/2}$,
$e^{2|z|}>2|z|^2$, $|z|=aT\ge6T$, and $T>630$.
Combining this with the preceding absolute bound gives
\begin{equation}\label{bound:Iv}
\frac{|I_v|}{|M_\beta|}=\frac{
\left|\displaystyle\int_0^{ib}
e^{\phi(u)+\beta u}\,du\right|
}{|M_\beta|}
<
\frac{1}{100T}.
\end{equation}

\subsubsection{Estimate of $H_2(z)$.}
Let $$
I_\theta(z)=\int_0^\infty u^{2z}e^{u/4}\sum_{k\ge2}(2\pi^2k^4e^{2u}-3\pi k^2e^u)e^{-\pi k^2e^u}\,du.
$$
Then 
$
H_2(z)=2^{1-2z}I_\theta(z).
$
We now estimate the contribution of $I_\theta(z)$
relative to the leading term $2\pi^2M_{9/4}$,
where $M_{9/4}=e^{\phi(w)+9w/4}G$.
More precisely, we will prove that
\begin{equation*}
\frac{|I_\theta(z)|}{2\pi^2|M_{9/4}|}
\le \frac{0.03}{T}.
\end{equation*}

\smallskip
\noindent
\textbf{\emph{The integrand bound.}}
We first rewrite the integrand of $I_\theta(z)$. Define
\begin{equation*}
E_\theta(u)
:=
e^{u/4}
\sum_{k\ge2}
\bigl(2\pi^2k^4e^{2u}-3\pi k^2e^u\bigr)
e^{-\pi k^2e^u},\qquad I_\theta(z)=\int_0^\infty u^{2z}E_\theta(u)\,du.
\end{equation*}
The series defining $E_\theta(u)$ converges locally uniformly
in the strip $|\operatorname{\Im}u|<\pi/2$.
Since $E_\theta$ is bounded near the origin and decays
super-exponentially as $\operatorname{\Re}u\to\infty$
in $|\operatorname{\Im}u|\le\pi/4$, the same contour
deformation used for $J_\beta$ applies here, and we use 
$$
I_G'=\int_{-1/10}^{1/10} (w+y)^{2z}E_\theta(w+y)\,dy.
$$
Similarly, we define $I_-',I_+',I_d'$.
We define 
$$
I_v'=i\int_{0}^{b} (it)^{2z}E_\theta(it)\,dt.
$$

Recall that $\phi(u)=2z\operatorname{Log}u-\pi e^u$.
For each $k\ge2$, we have
\begin{align*}
u^{2z}e^{-\pi k^2e^u}=
\exp\!\left(2z\operatorname{Log}u-\pi k^2e^u\right)=
\exp\!\left(\phi(u)-\pi(k^2-1)e^u\right).
\end{align*}
Consequently, for $u\ne0$ on the deformed contour,
\begin{align*}
u^{2z}E_\theta(u)
&=
2\pi^2e^{\phi(u)+9u/4}
\sum_{k\ge2}k^4e^{-\pi(k^2-1)e^u}
-3\pi e^{\phi(u)+5u/4}
\sum_{k\ge2}k^2e^{-\pi(k^2-1)e^u}.
\end{align*}

On the deformed contour, we have
$\Re u\ge0$ and $|\Im u|\le\pi/4$.
Set $r:=\Re(e^u)$. Then
\begin{equation*}
r=e^{\Re u}\cos(\Im u)\ge\frac1{\sqrt{2}}.
\end{equation*}
Consequently, for every $k\ge2$,
\begin{equation*}
\left|e^{-\pi(k^2-1)e^u}\right|
=
e^{-\pi(k^2-1)r}
=
e^{\pi r}e^{-\pi k^2r}.
\end{equation*}

Now we estimate $u^{2z}E_\theta(u)$.
For the second term in $E_\theta(u)$,
\begin{equation*}
\begin{aligned}
&u^{2z}e^{u/4}
\sum_{k\ge2}(-3\pi k^2e^u)e^{-\pi k^2e^u}\\
&\qquad=
-3\pi\sum_{k\ge2}k^2
\exp\!\left(
2z\operatorname{Log}u+\frac{5u}{4}-\pi k^2e^u
\right)\\
&\qquad=
-3\pi e^{\phi(u)+5u/4}
\sum_{k\ge2}k^2e^{-\pi(k^2-1)e^u}.
\end{aligned}
\end{equation*}
Hence,
\begin{equation*}
\begin{aligned}
\left|u^{2z}e^{u/4}
\sum_{k\ge2}(-3\pi k^2e^u)e^{-\pi k^2e^u}\right|&=
\left|
-3\pi e^{\phi(u)+5u/4}
\sum_{k\ge2}k^2e^{-\pi(k^2-1)e^u}
\right|\\
&\le
3\pi\left|e^{\phi(u)+5u/4}\right|
\sum_{k\ge2}k^2e^{-\pi(k^2-1)r}\\
&=
3\pi e^{\pi r}\left|e^{\phi(u)+5u/4}\right|
\sum_{k\ge2}k^2e^{-\pi k^2r}.
\end{aligned}
\end{equation*}
Finally, using
\begin{equation*}
\left|e^{\phi(u)+5u/4}\right|=\left|e^{\phi(u)+9u/4-u}\right|= |e^{-u}|\left|e^{\phi(u)+9u/4}\right|
=
e^{-\Re u}\left|e^{\phi(u)+9u/4}\right|.
\end{equation*}
we obtain
\begin{equation*}
\begin{aligned}
&\left|
-3\pi e^{\phi(u)+5u/4}
\sum_{k\ge2}k^2e^{-\pi(k^2-1)e^u}
\right|\le
e^{\pi r}\left|e^{\phi(u)+9u/4}\right|
\left(
3\pi e^{-\Re u}
\sum_{k\ge2}k^2e^{-\pi k^2r}
\right).
\end{aligned}
\end{equation*}

Applying the same argument to the first term and
combining the two bounds,
we obtain

\begin{align}\label{Etheta}
|u^{2z}E_\theta(u)|
&\le
e^{\pi r}\left|e^{\phi(u)+9u/4}\right|
\left(
2\pi^2\sum_{k\ge2}k^4e^{-\pi k^2r}
+
3\pi e^{-\Re u}\sum_{k\ge2}k^2e^{-\pi k^2r}
\right).
\end{align}
This reduces the pointwise estimate of the theta remainder
to bounds for the two positive series
\begin{equation*}
\sum_{k\ge2}k^4e^{-\pi k^2r},
\qquad
\sum_{k\ge2}k^2e^{-\pi k^2r},
\qquad r\ge\frac1{\sqrt{2}}.
\end{equation*}

We now prove that
\begin{equation*}
\sum_{k\ge2}k^4e^{-\pi k^2r}
\le17e^{-4\pi r},
\qquad
\sum_{k\ge2}k^2e^{-\pi k^2r}
\le5e^{-4\pi r}.
\end{equation*}
For $p\in\{2,4\}$, write
\begin{equation*}
\sum_{k\ge2}k^pe^{-\pi k^2r}
=
e^{-4\pi r}
\left(
2^p+\sum_{k\ge3}k^pe^{-\pi(k^2-4)r}
\right).
\end{equation*}
To bound the remaining sum, set
$t_k=k^pe^{-\pi(k^2-4)r}$ for $k\ge3$.
Since $r\ge1/\sqrt{2}$, we have
\begin{equation*}
t_3\le81e^{-5\pi/\sqrt{2}}<0.002,
\qquad
\frac{t_{k+1}}{t_k}
=
\left(\frac{k+1}{k}\right)^p
e^{-\pi(2k+1)r}
\le
\left(\frac43\right)^4e^{-7\pi/\sqrt{2}}
<10^{-6}.
\end{equation*}
Thus, comparison with a geometric series gives
\begin{equation*}
\sum_{k\ge3}t_k
\le
\frac{t_3}{1-10^{-6}}
<
\frac{0.002}{1-10^{-6}}
<1.
\end{equation*}
Consequently,
\begin{equation}\label{17-5}
\sum_{k\ge2}k^4e^{-\pi k^2r}
<17e^{-4\pi r},
\qquad
\sum_{k\ge2}k^2e^{-\pi k^2r}
<5e^{-4\pi r}.
\end{equation}

By~\eqref{Etheta} and~\eqref{17-5},
we obtain
\begin{align*}
|u^{2z}E_\theta(u)|
&\le
e^{\pi r}
\left|e^{\phi(u)+9u/4}\right|
\left(
34\pi^2+15\pi e^{-\Re u}
\right)e^{-4\pi r}
\\
&=
\left(
34\pi^2+15\pi e^{-\Re u}
\right)
e^{-3\pi r}
\left|e^{\phi(u)+9u/4}\right|
\\
&\le
(34\pi^2+15\pi)
e^{-3\pi r}
\left|e^{\phi(u)+9u/4}\right|
\\
&<
20(2\pi^2)e^{-3\pi r}
\left|e^{\phi(u)+9u/4}\right|,
\end{align*}
where we used $\Re u\ge0$ and
$34\pi^2+15\pi<20(2\pi^2)$.

Note that $$r=\Re(e^{w+y})=\Re(e^{(a+y)+ib})=e^{(a+y)} \cos b.$$
On the horizontal ray $u=w+y$, we have 
\begin{equation}\label{key-bound-H2}
\left|(w+y)^{2z}E_\theta(w+y)\right|
\le
20(2\pi^2)e^{-3\pi e^{a+y}\cos b}
\left|e^{\phi(w+y)+9(w+y)/4}\right|,
\qquad y\ge-a.
\end{equation}

\smallskip
\noindent
\textbf{\emph{The region $y\ge-a/2$: $I'_G+I'_-+I'_+$.}}
We first estimate the integrand $(w+y)^{2z}E_\theta(w+y)$ by~\eqref{key-bound-H2}.
In this region, $a+y\ge a/2$, so
\begin{equation*}
e^{a+y}\cos b
\ge\frac{e^{a/2}}{\sqrt{2}},
\end{equation*}
by the fact~\eqref{b<pi/4} that $|b|\le \pi/4$.
Consequently,
\begin{equation*}
20e^{-3\pi e^{a+y}\cos b}
\le
20\exp\!\left(
-\frac{3\pi}{\sqrt{2}}e^{a/2}
\right).
\end{equation*}
The right-hand side is independent of $y$ and can
therefore be taken outside the integral.

We next bound the integral of
$\left|e^{\phi(w+y)+9(w+y)/4}\right|$
relative to $|M_{9/4}|$.
Recall that
$X(y)=9y/4+c_3y^3+R_4(y)$ with $\beta=9/4$,
and by~\eqref{kappa-first},
\begin{equation*}
e^{\phi(w+y)+9(w+y)/4}
=
e^{\phi(w)+9w/4}
e^{-\kappa y^2/2}e^{X(y)}.
\end{equation*}
For $|y|\le 1/10$,
by~\eqref{eq:xi-saddle-curvature}, $\Re\kappa>1.4T$;
and by~\eqref{expon}, 	$e^{|X(y)|}\le1.2e^{0.091Ty^2}$.
These give
\begin{equation*}
\left|e^{-\kappa y^2/2}e^{X(y)}\right|
\le
1.2e^{-0.6Ty^2}.
\end{equation*}
Moreover, the definitions~\eqref{Gaussmain} and~\eqref{eq:G},
together with~\eqref{low-G}, give
\begin{equation*}
|M_{9/4}|
=
e^{\Re\phi(w)+9a/4}|G|,
\qquad
|G|\ge1.64T^{-1/2}.
\end{equation*}
This yields
\begin{equation*}
\frac{
\left|e^{\phi(w+y)+9(w+y)/4}\right|
}{
|M_{9/4}|
}
\le
\frac{1.2}{|G|}e^{-0.6Ty^2}.
\end{equation*}

Note that by~\eqref{bound:I-I+}, 
\begin{align*}
\frac{|I_-|+|I_+|}{|M_{9/4}|}&\le\frac{1}{|M_{9/4}|}
\left(
\int_{-a/2}^{-1/10}
\left|e^{\phi(w+y)+9(w+y)/4}\right|\,dy
+
\int_{1/10}^{\infty}
\left|e^{\phi(w+y)+9(w+y)/4}\right|\,dy
\right)\\
&\le
\frac{12.6e^{-0.0063T}}{\sqrt{T}}
<
\frac{9}{T}.
\end{align*}
Indeed, the function $T\mapsto \sqrt{T}e^{-0.0063T}$
is decreasing for $T\ge630$, and
$
12.6\sqrt{630}\,e^{-0.0063\cdot630}<9.
$

Combining these bounds, we obtain
\begin{align*}
\frac{
\displaystyle\int_{-a/2}^{\infty}
\left|e^{\phi(w+y)+9(w+y)/4}\right|\,dy
}{
|M_{9/4}|
}
&\le
\frac{1.2}{|G|}
\int_{-1/10}^{1/10}e^{-0.6Ty^2}\,dy
+\frac{9}{T}
\\
&\le
\frac{1.2}{|G|}
\int_{-\infty}^{\infty}e^{-0.6Ty^2}\,dy
+\frac{9}{T}
\\
&=
\frac{1.2}{|G|}
\sqrt{\frac{\pi}{0.6T}}
+\frac{9}{T}
\\
&\le
\frac{1.2}{1.64}
\sqrt{\frac{\pi}{0.6}}
+\frac{9}{T}
<2,
\end{align*}
where the last line uses~\eqref{low-G}, and the final strict
inequality follows from $T>630$.

By the triangle
inequality and the pointwise comparison above,
\begin{align*}
&\frac{
\left|
\displaystyle\int_{-a/2}^{\infty}
(w+y)^{2z}E_\theta(w+y)\,dy
\right|
}{
2\pi^2|M_{9/4}|
}
\le
\frac{1}{2\pi^2|M_{9/4}|}
\int_{-a/2}^{\infty}
\left|(w+y)^{2z}E_\theta(w+y)\right|\,dy
\\
&\quad\le
20\exp\!\left(
-\frac{3\pi}{\sqrt{2}}e^{a/2}
\right)
\frac{
\displaystyle\int_{-a/2}^{\infty}
\left|e^{\phi(w+y)+9(w+y)/4}\right|\,dy
}{
|M_{9/4}|
}
<
40\exp\!\left(
-\frac{3\pi}{\sqrt{2}}e^{a/2}
\right).
\end{align*}

Finally, we verify that this bound is less than
$0.01/T$. Using $T<1.586e^a$ and putting
$v=e^{a/2}\ge e^3>20$, we have
\begin{align*}
40T\exp\!\left(
-\frac{3\pi}{\sqrt{2}}e^{a/2}
\right)
&<
40(1.586)v^2
e^{-(3\pi/\sqrt{2})v}<
64v^2e^{-6.6v}.
\end{align*}
The function $v^2e^{-6.6v}$ is decreasing for $v\ge20$,
and
\begin{equation*}
64v^2e^{-6.6v}
\le
64\cdot20^2e^{-132}
<0.01.
\end{equation*}
The last inequality follows, for example, from
$e^{132}>132^4/24$.
Therefore,
\begin{equation}\label{H_21}
\frac{
\left|
\displaystyle\int_{-a/2}^{\infty}
(w+y)^{2z}E_\theta(w+y)\,dy
\right|
}{
2\pi^2|M_{9/4}|
}
<
\frac{0.01}{T}.
\end{equation}

\smallskip
\noindent
\textbf{\emph{The distant left $I'_d$ and vertical segment $I'_v$.}}
We first consider the remaining horizontal interval
\begin{equation*}
u=w+y,\qquad -a\le y\le-a/2.
\end{equation*}
On this interval, we use the weaker bound
$
20e^{-3\pi e^{a+y}\cos b}\le20.
$
By~\eqref{key-bound-H2},
\begin{equation*}
\left|(w+y)^{2z}E_\theta(w+y)\right|
\le
20(2\pi^2)
\left|e^{\phi(w+y)+9(w+y)/4}\right|.
\end{equation*}
Applying the triangle inequality and the distance-left
absolute-integral estimate from Part (iii) of Section~\ref{subsec:EstH1} with $\beta=9/4$,
we obtain
\begin{align*}
\frac{
\left|
\displaystyle\int_{-a}^{-a/2}
(w+y)^{2z}E_\theta(w+y)\,dy
\right|
}{
2\pi^2|M_{9/4}|
}&\le
\frac{1}{2\pi^2|M_{9/4}|}
\int_{-a}^{-a/2}
\left|(w+y)^{2z}E_\theta(w+y)\right|\,dy
\\
&\le
\frac{20}{|M_{9/4}|}
\int_{-a}^{-a/2}
\left|e^{\phi(w+y)+9(w+y)/4}\right|\,dy
\end{align*}
Recall that by~\eqref{eq:aT}, $T=|z|/a>630$ and $a\ge6$,
so that $T\le |z|/6$.
For $-a\le y\le-a/2$, the estimate \eqref{0.58} gives
$\Re\bigl(\phi(w+y)-\phi(w)\bigr)\le-0.58 |z|$.
Consequently,
\begin{align*}
\frac{20}{|M_{9/4}|}
\int_{-a}^{-a/2}
\left|e^{\phi(w+y)+9(w+y)/4}\right|\,dy
&=
\frac{20}{|G|}
\int_{-a}^{-a/2}
\exp\left(
\Re\bigl(\phi(w+y)-\phi(w)\bigr)+\frac{9y}{4}
\right)\,dy
\\
&\le
\frac{20}{|G|}
\int_{-a}^{-a/2}e^{-0.58 |z|}\,dy=
\frac{10a}{|G|}e^{-0.58 |z|}\\
&\le
\frac{10a\sqrt{T}}{1.64}e^{-0.58|z|}=
\frac{20|z|}{3.28\sqrt{T}}e^{-0.58|z|}
<
20|z|e^{-0.58|z|},
\end{align*}
where we used $|G|\ge1.64T^{-1/2}$ from~\eqref{low-G}.

We use the elementary
inequality $e^x>x^3/6$ for $x>0$.
Since $|z|\ge4000$, it follows that
$
e^{0.08|z|}>
\frac{(0.08|z|)^3}{6}
>
20|z|$,
and hence
\begin{equation*}
20|z|e^{-0.58|z|}<e^{-|z|/2}.
\end{equation*}
Similarly,
$
e^{|z|/2}>
\frac{|z|^3}{48}
>
\frac{100|z|}{6}
\ge100T,
$
which implies
\begin{equation*}
e^{-|z|/2}<\frac{1}{100T}=\frac{0.01}{T}.
\end{equation*}
Combining these estimates, we obtain
\begin{equation}\label{H_22}
\frac{
\left|
\displaystyle\int_{-a}^{-a/2}
(w+y)^{2z}E_\theta(w+y)\,dy
\right|
}{
2\pi^2|M_{9/4}|
}
<
20|z|e^{-0.58|z|}
<
e^{-|z|/2}
<
\frac{0.01}{T}.
\end{equation}

We next estimate the vertical segment
$$I_v'=i\int_{0}^{b} (it)^{2z}E_\theta(it)\,dt.$$
If $b=0$, then $I'_v=0$.
Otherwise, write $u=it$, where $t$ runs from $0$ to $b$.
Since $|t|\le|b|\le\pi/4$, we have
\begin{equation*}
\Re(e^{it})=\cos t\ge\frac1{\sqrt{2}}.
\end{equation*}
By the definition of $E_\theta$,
\begin{equation*}
E_\theta(it)
=
e^{it/4}
\sum_{k\ge2}
\left(2\pi^2k^4e^{2it}-3\pi k^2e^{it}\right)
e^{-\pi k^2e^{it}}.
\end{equation*}
Since $t$ is real, we have
$|e^{it/4}|=1$
and
\begin{equation*}
\left|e^{-\pi k^2e^{it}}\right|
=
\left|
e^{-\pi k^2\cos t}e^{-i\pi k^2\sin t}
\right|
=
e^{-\pi k^2\cos t}.
\end{equation*}
Moreover, the triangle inequality gives
\begin{equation*}
\left|2\pi^2k^4e^{2it}-3\pi k^2e^{it}\right|
\le
2\pi^2k^4+3\pi k^2.
\end{equation*}
Taking absolute values term by term and applying~\eqref{17-5}
with $\cos t\ge1/\sqrt{2}$, we obtain
\begin{align*}
|E_\theta(it)|
&\le
\sum_{k\ge2}
(2\pi^2k^4+3\pi k^2)e^{-\pi k^2\cos t}
\\
&\le
\sum_{k\ge1}
(2\pi^2k^4+3\pi k^2)e^{-\pi k^2\cos t}
\\
&=
(2\pi^2+3\pi)e^{-\pi\cos t}
+
2\pi^2\sum_{k\ge2}k^4e^{-\pi k^2\cos t}
+
3\pi\sum_{k\ge2}k^2e^{-\pi k^2\cos t}
\\
&\le
(2\pi^2+3\pi)e^{-\pi\cos t}
+
(2\pi^2\cdot17+3\pi\cdot5)e^{-4\pi\cos t}
\\
&\le
(2\pi^2+3\pi)e^{-\pi/\sqrt{2}}
+
(34\pi^2+15\pi)e^{-4\pi/\sqrt{2}}
<4.
\end{align*}

We also bound the factor $(it)^{2z}$.
By~\eqref{claim:proof 3.1},
$\arg z=b+\arctan(b/a)$,
so $b$ and $\Im z$ have the same sign. By~\eqref{argit},
\begin{equation*}
\operatorname{Log}(it)
=
\log|t|+i\arg(it),
\qquad
\arg(it)=\frac{\pi}{2}\operatorname{sgn}(t),
\end{equation*}
and therefore
$2\Im z\,\arg(it)=\pi|\Im z|$.
It follows that
\begin{align*}
|(it)^{2z}|
=
\left|\exp\!\left(2z\operatorname{Log}(it)\right)\right|
=
\exp\!\left(
2\Re z\log|t|-2\Im z\,\arg(it)
\right)
=
|t|^{2\Re z}e^{-\pi|\Im z|}
\le1,
\end{align*}
where we used $\Re z>0$ and $0<|t|\le\pi/4<1$.
As $t\to0$ along the segment, this modulus tends to zero,
so the endpoint $t=0$ is interpreted by a limit.
Consequently,
\begin{align*}
\left|
\int_0^{ib}u^{2z}E_\theta(u)\,du
\right|
&=
\left|
i\int_0^b(it)^{2z}E_\theta(it)\,dt
\right|
\le
\int_{\min\{0,b\}}^{\max\{0,b\}}
|(it)^{2z}|\,|E_\theta(it)|\,dt
\le
4|b|
\le\pi<4.
\end{align*}

Recall that
\begin{equation*}
|M_{9/4}|
=
e^{\Re\phi(w)+9a/4}|G|,
\qquad
|G|\ge1.64T^{-1/2},
\end{equation*}
and  $\Re\phi(w)>2|z|$.
It follows that 
\begin{align}\label{H_23}
\frac{
\left|
\displaystyle\int_0^{ib}
u^{2z}E_\theta(u)\,du
\right|
}{
2\pi^2|M_{9/4}|
}
\le
\frac{4}{
2\pi^2e^{\Re\phi(w)+9a/4}|G|
}
\le
\frac{4}{2\pi^2}
\frac{\sqrt{T}}{1.64}
e^{-2|z|}
<
\frac{0.01}{T}.
\end{align}
The last inequality follows from
$|z|\ge4000$, $T\le |z|/6$.

By the contour deformation,
\begin{align*}
I_\theta(z)
&=
\int_0^{ib}u^{2z}E_\theta(u)\,du
+
\int_{-a}^{-a/2}
(w+y)^{2z}E_\theta(w+y)\,dy
+
\int_{-a/2}^{\infty}
(w+y)^{2z}E_\theta(w+y)\,dy.
\end{align*}
Combining~\eqref{H_21},~\eqref{H_22} and~\eqref{H_23}, we conclude that
\begin{equation}\label{H_2123}
\frac{|I_\theta(z)|}{2\pi^2|M_{9/4}|}
\le
\frac{0.01}{T}
+
\frac{0.01}{T}
+
\frac{0.01}{T}
=
\frac{0.03}{T}.
\end{equation}

\subsubsection{Proof of Lemma~\ref{lem:xi-explicit-saddle}}

\begin{proof}
We now combine the estimates from Sections 3.1.1 and 3.1.2 to prove~\eqref{eq:xi-saddle-error}.
Fix $\beta\in\{5/4,9/4\}$.
By the contour decomposition~\eqref{Jbeta},
\begin{equation*}
J_\beta(z)=I_G+I_-+I_++I_d+I_v.
\end{equation*}
The Gaussian estimates
\eqref{eq:xi-gaussian-truncation} and~\eqref{epsilonG},
together with the local error bound
\eqref{eq:xi-saddle-local-error}, give
\begin{equation*}
\left|\frac{I_G}{M_\beta}-1\right|
\le
\frac{8.8e^{-0.007T}}{\sqrt T}
+\frac{14.4}{T}
+\frac{1.3}{T^2}.
\end{equation*}
Combining this with the horizontal-tail estimate
\eqref{bound:I-I+}, the distant-left estimate
\eqref{bound:Id}, and the vertical estimate
\eqref{bound:Iv}, we obtain
\begin{align*}
\left|\frac{J_\beta(z)}{M_\beta}-1\right|
&\le
\left|\frac{I_G}{M_\beta}-1\right|
+
\frac{|I_-|+|I_+|+|I_d|+|I_v|}{|M_\beta|}
\\
&\le
\frac{14.4}{T}
+\frac{1.3}{T^2}
+
\frac{8.8e^{-0.007T}+12.6e^{-0.0063T}}{\sqrt T}
+\frac{2}{100T}.
\end{align*}

For $c\in\{0.007,0.0063\}$,
\begin{equation*}
\frac{d}{dT}\left(\sqrt T\,e^{-cT}\right)
=
\sqrt T\,e^{-cT}
\left(\frac{1}{2T}-c\right)<0
\qquad(T\ge630).
\end{equation*}
Consequently,
\begin{equation*}
\begin{aligned}
\sqrt T\left(8.8e^{-0.007T}+12.6e^{-0.0063T}\right)\le
\sqrt{630}
\left(8.8e^{-4.41}+12.6e^{-3.969}\right)
<9.
\end{aligned}
\end{equation*}
It follows that
\begin{equation}\label{eq:xi-saddle-Jbeta-error}
\left|\frac{J_\beta(z)}{M_\beta}-1\right|
\le
\frac{14.4}{T}
+\frac{1.3}{T^2}
+\frac{9}{T}
+\frac{2}{100T}
<
\frac{24}{T},
\qquad
\beta\in\left\{\frac54,\frac94\right\},
\end{equation}
where the last inequality uses $T>630$.

We next identify the Gaussian main term with $H_*(z)$.
The saddle relation $we^w=2z/\pi$ implies 
\begin{equation*}
\phi(w)
=
2z\operatorname{Log}w-\pi e^w
=
2z\left(\operatorname{Log}w-\frac1w\right).
\end{equation*}
Moreover,
\begin{equation*}
\kappa=-\phi''(w)
=
\pi e^w\frac{1+w}{w}.
\end{equation*}
Thus, by~\eqref{eq:G},
\begin{equation*}
G^2=\frac{2\pi}{\kappa}
=e^{-w}\frac{2w}{1+w}.
\end{equation*}
Indeed, by~\eqref{b<pi/4} and
\eqref{eq:xi-saddle-curvature}, we have
$\Re w>0$ and $\Re\kappa>0$ throughout the sector, while
\begin{equation*}
\Re\frac{w}{1+w}
=
\frac{a(a+1)+b^2}{(a+1)^2+b^2}>0.
\end{equation*}
Hence both $G=\sqrt{2\pi}\,\kappa^{-1/2}$ and
\begin{equation*}
e^{-w/2}\sqrt{\frac{2w}{1+w}}
\end{equation*}
are holomorphic on a connected open neighborhood of the
sector, with the branches fixed by their positive values
on the positive real axis. Since they have the same square
and agree there, analytic continuation gives
\begin{equation*}
G=e^{-w/2}\sqrt{\frac{2w}{1+w}}.
\end{equation*}
By~\eqref{Gaussmain}, we obtain
\begin{align*}
2^{1-2z}2\pi^2M_{9/4}
&=
2^{1-2z}2\pi^2e^{\phi(w)+9w/4}G
\\
&=
4\pi^2e^{7w/4}\sqrt{\frac{2w}{1+w}}\,
\exp\!\left(\phi(w)-2z\log2\right)
\\
&=
4\pi^2e^{7w/4}\sqrt{\frac{2w}{1+w}}\,
\exp\!\left(
2z\left(\operatorname{Log}\frac w2-\frac1w\right)
\right)=H_*(z).
\end{align*}
Here we used
$\operatorname{Log}(w/2)=\operatorname{Log}w-\log2$,
which holds because $\Re w>0$.
The same representation shows that $H_*$ is holomorphic
and nonzero on an open neighborhood of the sector.

We now estimate the lower-order contribution involving
$J_{5/4}$.  Equation~\eqref{Gaussmain} gives
\begin{equation*}
\frac{M_{5/4}}{M_{9/4}}=e^{-w}.
\end{equation*}
By~\eqref{eq:xi-saddle-parameters},
$
e^{-a}={\pi|w|}/({2aT}).
$
Hence~\eqref{eq:xi-saddle-Jbeta-error} implies
\begin{align*}
\frac{|3\pi J_{5/4}(z)|}{2\pi^2|M_{9/4}|}
&=
\frac{3}{2\pi}
\left|\frac{J_{5/4}(z)}{M_{5/4}}\right|
\left|\frac{M_{5/4}}{M_{9/4}}\right|
\\
&\le
\frac{3}{2\pi}
\left(1+\frac{24}{T}\right)e^{-a}
\\
&=
\frac{3}{4T}\frac{|w|}{a}
\left(1+\frac{24}{T}\right)
\\
&<
\frac{3\cdot1.009}{4T}
\left(1+\frac{24}{630}\right)
<
\frac{0.8}{T},
\end{align*}
where we used~\eqref{1009} and $T>630$.

By~\eqref{H1-J} and the definition of $I_\theta$, the exact
decomposition of $H$ is
\begin{equation*}
H(z)
=
2^{1-2z}
\left(
2\pi^2J_{9/4}(z)
-3\pi J_{5/4}(z)
+I_\theta(z)
\right).
\end{equation*}
Since $H_*(z)=2^{1-2z}2\pi^2M_{9/4}$, it follows that
\begin{equation*}
\frac{H(z)}{H_*(z)}-1
=
\left(\frac{J_{9/4}(z)}{M_{9/4}}-1\right)
-
\frac{3\pi J_{5/4}(z)}{2\pi^2M_{9/4}}
+
\frac{I_\theta(z)}{2\pi^2M_{9/4}}.
\end{equation*}
Using
\eqref{eq:xi-saddle-Jbeta-error},
the preceding bound for $J_{5/4}$, and~\eqref{H_2123},
we obtain
\begin{align*}
\left|\frac{H(z)}{H_*(z)}-1\right|
&\le
\left|\frac{J_{9/4}(z)}{M_{9/4}}-1\right|
+
\frac{|3\pi J_{5/4}(z)|}{2\pi^2|M_{9/4}|}
+
\frac{|I_\theta(z)|}{2\pi^2|M_{9/4}|}
\\
&<
\frac{24}{T}
+\frac{0.8}{T}
+\frac{0.03}{T}
<
\frac{30}{T}.
\end{align*}
This proves the first inequality in~\eqref{eq:xi-saddle-error}.

Since $T>630$, the preceding estimate gives
\begin{equation*}
\left|\frac{H(z)}{H_*(z)}-1\right|<\frac{30}{T}<\frac1{21}<1.
\end{equation*}
Therefore,
in the sector~\eqref{sector},
$$H(z)=H_*(z)\left(1+\frac{H(z)}{H_*(z)}-1\right)\ne 0.$$ 

Since
$$
\Re\frac{H(z)}{H_*(z)}=\Re\left(1+\frac{H(z)}{H_*(z)}-1\right)=1+\Re\left(\frac{H(z)}{H_*(z)}-1\right)\ge 1-\left|\frac{H(z)}{H_*(z)}-1\right|.
$$
we obtain
\begin{equation*}
\Re\frac{H(z)}{H_*(z)}
>
1-\frac{30}{T}
>
\frac{20}{21}>0.
\end{equation*}
By continuity, this inequality holds on an open neighborhood
of the sector~\eqref{sector} and consequently,
$\operatorname{Log}\!\bigl(H(z)/H_*(z)\bigr)$ is holomorphic. For real $z\ge4000$, both $H(z)$ and $H_*(z)$ are
positive, so
\begin{equation*}
\operatorname{Log}\frac{H(z)}{H_*(z)}
=
\log\frac{H(z)}{H_*(z)}.
\end{equation*}

Using the power series for $\operatorname{Log}(1+v)$,  with $v(z)={H(z)}/{H_*(z)}-1,$
we conclude that
\begin{align*}
\left|\operatorname{Log}\frac{H(z)}{H_*(z)}\right|
&\le
\sum_{m\ge1}\frac{|v(z)|^m}{m}
\le
\frac{|v(z)|}{1-|v(z)|}
\le
\frac{30/T}{1-30/630}
=
\frac{31.5}{T}
<
\frac{32}{T}
<
0.051.
\end{align*}
This proves~\eqref{eq:xi-saddle-error} and completes
the proof.
\end{proof}

\subsection{Estimate of the initial logarithmic ratios $g_0(z)$}
In this subsection, we establish uniform estimates for the initial logarithmic ratio $g_0$.
Recall that $$\Lambda(z)=\frac{H(z)}{\Gamma(2z+1)}.$$
Define
\begin{equation}\label{eq:g0(z)}
g_0(z)
=
-\Delta^2\Log\Lambda(z)
=
2\Log\Lambda(z)-\Log\Lambda(z-1)-\Log\Lambda(z+1).
\end{equation}
Here $\Log\Lambda$ denotes a holomorphic logarithm of
$\Lambda$ on the disks considered below, normalized to
agree with the real logarithm on their real intervals.
The existence of this branch will be established below.

\begin{lemma}
\label{lem:explicit_analytic_initial_ratio}
Let $n\ge12000$ be real, and let
\begin{equation*}
D_0=\{z\in\mathbb C:|z-n|\le n/2\}.
\end{equation*}
Suppose that $g_0$ is the function defined in~\eqref{eq:g0(z)}, $w=W(2z/\pi)$,
and $$Q(z)=\frac{2w}{z(1+w)}.$$ Then
\begin{itemize}
\item[\rm(i)]  $g_0$ is holomorphic on a neighborhood of $D_0;$
\item[\rm(ii)] $g_0$ is real-valued on $[n/2,3n/2];$
\item[\rm(iii)] for every integer $k\in D_0$,
\begin{equation*}
g_0(k)
=
\log\frac{\lambda_k^2}{\lambda_{k-1}\lambda_{k+1}}.
\end{equation*}
\item[\rm(iv)] for every $z\in D_0$,
\begin{equation}\label{eq:g0-approximation}
|g_0(z)-Q(z)|\le\frac{20}{n^2},\qquad
\Re\bigl(ng_0(z)\bigr)
>\frac58,
\qquad
|ng_0(z)|<\frac92.
\end{equation}
Moreover,
\begin{equation}\label{54}
ng_0(n) >\frac54,
\end{equation}
\end{itemize}
\end{lemma}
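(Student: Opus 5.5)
The plan is to split $\operatorname{Log}\Lambda=\operatorname{Log}H-\operatorname{Log}\Gamma(2z+1)$ and to feed the saddle estimate of Lemma~\ref{lem:xi-explicit-saddle} into the second difference. The leading part is handled by an exact derivative identity, and the error part by Cauchy estimates on a larger disk.

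\textbf{Domain and branches.} Take $n\ge 12000$ and $z\in D_0$. Then $z\pm1$ and every point of the slightly larger disk $D'=\{|\zeta-n|\le 3n/4\}$ satisfy $|\zeta|\ge n/4\ge 3000$ and $|\arg\zeta|\le\arcsin(3/4)$. Both of these miss the sector~\eqref{sector}: $3000<4000$ and $\arcsin(3/4)>\pi/4$. So I use $D''=\{|\zeta-n|\le 0.7n\}$ instead. On it $|\zeta|\ge 0.3n\ge 3600$, which is still below $4000$. The radius therefore has to be tuned: take $D''=\{|\zeta-n|\le 0.66n\}$, so that $|\zeta|\ge 0.34n\ge 4080$ and $|\arg\zeta|\le\arcsin(0.66)<\pi/4$. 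This disk contains all points $z\pm1$ with $z\in D_0$, with room $\ge 0.15n$ for Cauchy estimates.

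On $D''$, Lemma~\ref{lem:xi-explicit-saddle} gives $H\neq0$ and $H=H_*e^{\varepsilon}$ with $\varepsilon=\operatorname{Log}(H/H_*)$ holomorphic and $|\varepsilon|\le 32/T$. Since $H_*$ is nonvanishing and holomorphic there (explicit formula), and $\Gamma(2z+1)$ is zero-free, $\Lambda$ has a holomorphic logarithm on the simply connected set $D''$, normalized to be real on $D''\cap\mathbb R$ using Lemma~\ref{lem:theta_moments_and_interpolation}(iii). With this branch, (i) and (ii) are immediate, and (iii) follows from $\Lambda(k)=\lambda_k$ in Lemma~\ref{lem:theta_moments_and_interpolation}(i).

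\textbf{Main term.} Write
\[
P(z)=2z\Bigl(\operatorname{Log}\frac w2-\frac1w\Bigr)+\frac{7w}{4}+\frac12\operatorname{Log}\frac{2w}{1+w}+\log(4\pi^2)-\operatorname{Log}\Gamma(2z+1),
\]
so that $g_0=-\Delta^2P-\Delta^2\varepsilon$. Using $w'=w/(z(1+w))$, the first term has derivative exactly $2\operatorname{Log}(w/2)$ and second derivative $2/(z(1+w))$. Also $(\operatorname{Log}\Gamma(2z+1))''=4\psi'(2z+1)=2/z+O(1/z^2)$, with explicit Stirling remainder. Hence
\[
-P''=\frac{2w}{z(1+w)}+O\Bigl(\frac1{|z|^2}\Bigr)=Q+O\Bigl(\frac1{|z|^2}\Bigr),
\]
where the $O(1/z^2)$ also absorbs the second derivatives of $7w/4$ and the square-root factor; these are $O(1/(|z|^2|w|))$.

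Then I use
\[
-\Delta^2P(z)=-P''(z)-\int_0^1(1-t)^3\bigl[P''''(z+t)+P''''(z-t)\bigr]\,dt/6 .
\]
Since $P''''=O(1/|z|^3)$, this gives $|-\Delta^2P-Q|\le c/n^2$ with an explicit $c$ comfortably below $20$, using $|z|\ge n/2$.

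\textbf{Error term.} By Cauchy's estimate on circles of radius $\rho=0.15n$ inside $D''$,
\[
|\varepsilon''|\le \frac{2\max|\varepsilon|}{\rho^2}\le\frac{64}{T\rho^2},
\qquad\text{so}\qquad |\Delta^2\varepsilon|\le\max|\varepsilon''|=O\Bigl(\frac{\log n}{n^3}\Bigr).
\]
Here $T\ge |\zeta|/a$ and $a\approx\log n$, so this is negligible against $1/n^2$ for $n\ge12000$. Adding it to the main-term constant gives $|g_0-Q|\le 20/n^2$. I expect the main obstacle to be this bookkeeping: explicit constants for Stirling's $\psi'$, for $P''''$, and for the choice of disk radii keeping everything inside the sector.

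\textbf{Numerical consequences.} We have
\[
nQ=\frac{2n}{z}\cdot\frac{w}{1+w}.
\]
For $|z-n|\le n/2$, the point $n/z$ lies in the disk with diameter $[2/3,2]$, so $\operatorname{Re}(n/z)\ge2/3$ and $|n/z|\le2$. Also $w/(1+w)$ has modulus $\le1$, real part $\ge a/(a+1)\ge6/7$, and argument $O(|b|/a^2)$, which is tiny.

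It follows that $|ng_0|\le 4+20/n<9/2$. For the real part, bound the argument of the product: $\operatorname{Re}(nQ)\ge 2\cdot\frac23\cdot\frac67\cos(\text{small})\approx1.14$, hence $\operatorname{Re}(ng_0)>5/8$. At $z=n$ we have $w=a\ge W(7640)>6$, so $nQ=2a/(1+a)>12/7$, and therefore $ng_0(n)>12/7-20/n>5/4$.
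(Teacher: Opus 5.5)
Your proposal is correct and follows essentially the paper's proof. You use the same splitting $\Log\Lambda=p+h_0+\varepsilon-\Log\Gamma(2z+1)$ on a disk of radius about $2n/3$ lying in the sector of Lemma~\ref{lem:xi-explicit-saddle}, the same identity $Q=2/z-p''$, a Cauchy estimate for $\varepsilon''$ on circles of radius $\asymp n$, and the same facts $\Re(n/z)\ge 2/3$ and $|n/z|\le 2$. The one technical difference is the Gamma term: the paper handles it exactly through $\Delta^2\Log\Gamma(2z+1)=\Log(1+1/z)+\Log(1+1/(2z))-\Log(1-1/(2z))$, which spares you an explicit complex Stirling remainder for $\psi'$ and $\psi'''$.

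A few of your side remarks are inaccurate, though none breaks the argument:
\begin{itemize}
\item The term $\tfrac74 w''\approx -\tfrac{7}{4z^2}$ is genuinely of order $|z|^{-2}$, not $O(|z|^{-2}|w|^{-1})$. It contributes about $7/n^2$ and is the largest item in your budget.
\item $\Delta^2\varepsilon$ is about $4$--$5/n^2$ at $n=12000$, so it is not negligible there.
\item The real-part step should be written as $\Re(nQ)\ge 2\Re(n/z)-2|n/z|/|1+w|\ge 16/21$, or via $\Re(uv)=\Re u\,\Re v-\Im u\,\Im v$. A ``product of real parts times $\cos(\text{small})$'' is not a valid lower bound as stated.
\end{itemize}
With these corrections everything still fits under $20/n^2$, $5/8$ and $9/2$.
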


\begin{proof}
\smallskip
\noindent
\textbf{\emph{{Proof of~(i--iii)}}}
Consider the open disk
\begin{equation*}
\widetilde D_0=\left\{z:|z-n|<\frac{2n}{3}\right\},
\end{equation*}
a neighborhood of $D_0$.
For $z\in \widetilde D_0$, we have 
$
|z|\ge n-|z-n|>\frac n3\ge 4000.
$

Since $\Re z=n+\Re(z-n)$ and
$\Im z=\Im(z-n)$,
by the Cauchy--Schwarz inequality,
\begin{align*}
|\Re(z-n)|+|\Im z|
&=
|\Re(z-n)|+|\Im(z-n)|\\
&\le
\sqrt{1^2+1^2}\,
\sqrt{(\Re(z-n))^2+(\Im(z-n))^2}=
\sqrt2\,|z-n|.
\end{align*}
Consequently, 
\begin{align*}
\Re z-|\Im z|
&=
n+\Re(z-n)-|\Im z|\ge
n-|\Re(z-n)|-|\Im z|\\
&\ge
n-\sqrt2\,|z-n|
>
n\left(1-\frac{2\sqrt2}{3}\right)>0,
\end{align*}
where the last inequality follows from  $|z-n|<2n/3$ and $2\sqrt2<3$.
Thus
\begin{equation*}
\Re z>|\Im z|\ge0,\qquad 
|\arg z|
=
\arctan\frac{|\Im z|}{\Re z}
<
\frac{\pi}{4}.
\end{equation*}
By Lemma~\ref{lem:xi-explicit-saddle}, $H(z)$ has no zeros in $\widetilde D_0$.

Since $\Gamma(2z+1)$ is holomorphic and nonzero
when $\Re z>0$,
it follows that $\Lambda(z)={H(z)}/\left({\Gamma(2z+1)}\right)$ is also holomorphic and nonzero
on $\widetilde D_0$.
The disk $\widetilde D_0$ is simply connected, so $H$, $\Gamma(2z+1)$,
and $\Lambda$ admit holomorphic logarithms on $\widetilde D_0$.

By Lemma~\ref{lem:theta_moments_and_interpolation},
$\Lambda(x)>0$ for real $x>-1/2$.
Since $\Gamma(2x+1)>0$ for $x>0$, we have
$H(x)>0$ for $x>0$.
We choose the logarithmic branches that agree with
the real logarithms on $(n/3,5n/3)$.
These choices give
\begin{equation}\label{eq:g0-log-Lambda}
\Log\Lambda(z)=\Log H(z)-\Log\Gamma(2z+1).
\end{equation}

For every $z\in D_0$, we have
\begin{equation*}
|(z\pm1)-n|
\le |z-n|+1
\le \frac n2+1
<\frac{2n}{3}.
\end{equation*}
Thus $\Log\Lambda(z)$ and $\Log\Lambda(z\pm1)$
are holomorphic on $\left\{z:|z-n|<\frac{2n}{3}-1\right\}$, and hence so is
\begin{equation*}
g_0(z)
=
2\Log\Lambda(z)-\Log\Lambda(z-1)-\Log\Lambda(z+1).
\end{equation*}
This proves (i).

For $x\in[n/2,3n/2]$, Lemma~\ref{lem:theta_moments_and_interpolation} gives
$
\Lambda(x), \Lambda(x\pm1)>0.$
Hence, 
$$g_0(x)=
2\log\Lambda(x)-\log\Lambda(x-1)-\log\Lambda(x+1)$$ is real-valued on $[n/2,3n/2]$.
More precisely, for $k\in[n/2,3n/2]\cap \mathbb{Z},$
\begin{align*}
g_0(k)
&=
2\log\Lambda(k)-\log\Lambda(k-1)-\log\Lambda(k+1)\\
&=
2\log\lambda_k-\log\lambda_{k-1}-\log\lambda_{k+1}=
\log\frac{\lambda_k^2}{\lambda_{k-1}\lambda_{k+1}}.
\end{align*}
This proves (ii) and (iii).

\smallskip
\noindent
\textbf{\emph{Proof of~(iv)}}
We now prove~\eqref{eq:g0-approximation} and~\eqref{54}.
We first show that $Q(z)$ is the main term of $g_0(z)$.

\textbf{\emph{The main term of $g_0$.}}
Recall that $w=W(2z/\pi)$.
On the disk $\widetilde D_0=\{z:|z-n|<2n/3\}$,
by~\eqref{eq:aT}, we have $\Re w\ge6$.
Therefore,
\begin{equation}\label{eq:g0-w-bounds}
|1+w|\ge7,
\qquad
\left|\frac{w}{1+w}\right|<1,
\end{equation}
where the second inequality follows from
$
|1+w|^2-|w|^2=1+2\Re w>0.
$
We also have
\begin{equation*}
\Re\frac{w}{1+w}
=
\frac{|w|^2+\Re w}{|1+w|^2}>0.
\end{equation*}

Recall that in
Lemma~\ref{lem:xi-explicit-saddle},
\begin{equation*}
H_*(z)=4\pi^2e^{7w/4}\sqrt{\frac{2w}{1+w}}
\exp\!\left(2z\left(\operatorname{Log}\frac w2-\frac1w\right)\right).
\end{equation*}
Define
\begin{equation*}
p(z)
=
2z\left(\operatorname{Log}\frac w2-\frac1w\right)\quad \textrm{and}\quad
h_0(z)
=
\log(4\pi^2)+\frac{7w}{4}
+\frac12\operatorname{Log}\frac{2w}{1+w}.
\end{equation*}
Then
$\Log H_*(z)=p(z)+h_0(z)$.
Let
\begin{equation*}
\varepsilon(z)
=
\operatorname{Log}\frac{H(z)}{H_*(z)}.
\end{equation*}
By~\eqref{eq:xi-saddle-error},
$\varepsilon$ is holomorphic on $\widetilde D_0$ and
satisfies
\begin{equation}\label{eq:g0-epsilon-bound}
|\varepsilon(z)|<0.051.
\end{equation}
For $x\in(n/3,5n/3)$, we have
\begin{equation*}
\log H(x)
=
\log H_*(x)+\varepsilon(x)
=
p(x)+h_0(x)+\varepsilon(x).
\end{equation*}
Since both sides are holomorphic on $\widetilde D_0$,
the identity theorem yields
\begin{equation}\label{eq:g0-log-H}
\Log H(z)=p(z)+h_0(z)+\varepsilon(z),
\qquad z\in\widetilde D_0.
\end{equation}

Differentiating $we^w=2z/\pi$, we obtain
\begin{equation}\label{eq:g0-w-derivative}
w'(z)=\frac{w}{z(1+w)}.
\end{equation}
It follows that
\begin{equation}\label{eq:g0-p-second}
p'(z)=
2\left(\operatorname{Log}\frac w2-\frac1w\right)
+
2zw'\left(\frac1w+\frac1{w^2}\right)=2\operatorname{Log}\frac w2\quad \textrm{and} \qquad
p''(z)=\frac{2w'}w=\frac2{z(1+w)}.
\end{equation}
In particular,
\begin{equation}\label{QZ}
\frac2z-p''(z)
=
\frac2z-\frac2{z(1+w)}
=
\frac{2w}{z(1+w)}
=
Q(z).
\end{equation}

By~\eqref{eq:g0-log-Lambda} and~\eqref{eq:g0-log-H},
\begin{equation*}
\Log\Lambda(z)
=
p(z)+h_0(z)+\varepsilon(z)-\Log\Gamma(2z+1).
\end{equation*}
Substituting into the definition of $g_0$, we obtain
\begin{align*}
g_0(z)
=&
2\Log\Lambda(z)-\Log\Lambda(z-1)-\Log\Lambda(z+1)\\
=&
-\Delta^2p(z)
-\Delta^2h_0(z)
-\Delta^2\varepsilon(z)
+\Delta^2\Log\Gamma(2z+1).
\end{align*}
Hence,
\begin{align}\label{g-Q}
g_0(z)-Q(z)
={}&
-\Delta^2p(z)
-\Delta^2h_0(z)
-\Delta^2\varepsilon(z)
+\Delta^2\Log\Gamma(2z+1)-\frac2z+p''(z)\nonumber\\
={}&
-\bigl(\Delta^2p(z)-p''(z)\bigr)
-\Delta^2h_0(z)
-\Delta^2\varepsilon(z)+
\left(
\Delta^2\Log\Gamma(2z+1)-\frac2z
\right).
\end{align}

We estimate these four terms
$$
\bigl(\Delta^2p(z)-p''(z)\bigr),\qquad
\Delta^2h_0(z),\qquad
\Delta^2\varepsilon(z),\qquad
\Delta^2\Log\Gamma(2z+1)-\frac2z
$$ separately.
For a function $f$ holomorphic on $\widetilde D_0$,
Taylor's formula with integral remainder gives
\begin{align}
\Delta^2f(z)
&=
\int_0^1(1-s)
\bigl(f''(z+s)+f''(z-s)\bigr)\,ds
\nonumber\\
&=
\int_{-1}^1(1-|s|)f''(z+s)\,ds,
\qquad z\in D_0.
\label{eq:g0-centered-difference}
\end{align}
We shall also use
$
\int_{-1}^1(1-|s|)\,ds=1.
$

\smallskip
\noindent
\textbf{\emph{Estimate of $\Delta^2p-p''$.}}
By~\eqref{eq:g0-p-second}
we have $p''(z)=2/({z(1+w)})$, where $w=W(2z/\pi)$.
Then 
\begin{equation*}
p'''(z)
=
-\frac2{z^2}
\left(
\frac1{1+w}+\frac{w}{(1+w)^3}
\right)\quad \textrm{and}\quad
p^{(4)}(z)
=
\frac1{z^3}
\left(
\frac4{1+w}
+\frac{4w}{(1+w)^3}
+\frac{2w^2(w+4)}{(1+w)^5}
\right).
\end{equation*}
Using~\eqref{eq:g0-w-bounds} and
$|w+4|\le|w|+4$, we obtain
\begin{align*}
|p^{(4)}(z)|
&\le
\frac1{|z|^3}
\left(
\frac47+\frac4{49}
+\frac2{49}+\frac8{343}
\right)=
\frac{246}{343|z|^3}
<
\frac1{|z|^3}.
\end{align*}

By Taylor's formula with integral remainder, we have
\begin{align*}
p(z+1)
&=p(z)+p'(z)+\frac12p''(z)+\frac16p'''(z)
+\frac16\int_0^1(1-s)^3p^{(4)}(z+s)\,ds,\\
p(z-1)
&=p(z)-p'(z)+\frac12p''(z)-\frac16p'''(z)
+\frac16\int_0^1(1-s)^3p^{(4)}(z-s)\,ds.
\end{align*}
Adding these identities, 
we obtain 
\begin{equation*}
p(z+1)+p(z-1)
=
2p(z)+p''(z)
+\frac16\int_0^1(1-s)^3
\bigl(p^{(4)}(z+s)+p^{(4)}(z-s)\bigr)\,ds.
\end{equation*}
Since $\Delta^2p(z)=p(z-1)-2p(z)+p(z+1)$, it follows that
\begin{equation*}
\Delta^2p(z)-p''(z)
=
\frac16\int_0^1(1-s)^3
\bigl(p^{(4)}(z+s)+p^{(4)}(z-s)\bigr)\,ds.
\end{equation*}
Hence, for $z\in D_0$,
\begin{align}
|\Delta^2p(z)-p''(z)|
&\le
\frac{2}{6(n/2-1)^3}
\int_0^1(1-s)^3\,ds=
\frac1{12(n/2-1)^3}
\le
\frac9{4n^3}
<
\frac1{n^2}.
\label{eq:g0-p-difference}
\end{align}
Here we used $n/2-1\ge n/3$ and $n\ge12000$.

\smallskip
\noindent
\textbf{\emph{Estimate of $\Delta^2h_0$.}}
Recall that $$
h_0(z)
=
\log(4\pi^2)+\frac{7w}{4}
+\frac12\operatorname{Log}\frac{2w}{1+w}.$$
Using~\eqref{eq:g0-w-derivative}, $w'(z)={w}/{(z(1+w))}$,
differentiating twice gives
\begin{align*}
h_0'(z)
&=
\frac74w'
+\frac12
\left(\frac1w-\frac1{1+w}\right)w'=
\frac1z
\left(
\frac{7w}{4(1+w)}
+\frac1{2(1+w)^2}
\right);\\
h_0''(z)
&=
\frac1{z^2}
\left(
-\frac{7w}{4(1+w)}
-\frac1{2(1+w)^2}
+\frac{7w}{4(1+w)^3}
-\frac{w}{(1+w)^4}
\right).
\end{align*}
By~\eqref{eq:g0-w-bounds}, $|1+w|\ge7$, and
$\left|{w}/{(1+w)}\right|<1$, we have 
\begin{align*}
|h_0''(z)|
&\le
\frac1{|z|^2}
\left(
\frac74+\frac1{98}+\frac1{28}+\frac1{343}
\right)=
\frac{617}{343|z|^2}
<
\frac{1.8}{|z|^2}.
\end{align*}
For $z\in D_0$ and $-1\le s\le1$,
\begin{equation*}
|z+s|\ge |z|-|s|\ge\frac n2-1.
\end{equation*}
Therefore, by~\eqref{eq:g0-centered-difference},
\begin{align}
|\Delta^2h_0(z)|
&\le
\int_{-1}^1(1-|s|)\,|h_0''(z+s)|\,ds<
\frac{1.8}{(n/2-1)^2}
<
\frac8{n^2},
\label{eq:g0-h0-difference}
\end{align}
where the last inequality uses $n\ge12000$.

\smallskip
\noindent
\textbf{\emph{Estimate of $\Delta^2\varepsilon$.}}
Fix a point $z_0$ satisfying $|z_0-n|\le n/2+1$. 
Consider the disk $\{z: |z-z_0|\le n/7\}$.
It is contained in $\widetilde D_0$
since $n/2+1+ n/7<{2n}/{3}$.
Let $C_z$ denote the positively oriented circle
$|z-z_0|=n/7$.
By Cauchy's integral formula, for $|v-z_0|<n/7$,
\begin{equation*}
\varepsilon(v)
=
\frac{1}{2\pi i}
\int_{C_z}
\frac{\varepsilon(\theta)}{\theta-v}\,d\theta.
\end{equation*}
Differentiating twice with respect to $v$ and then
setting $v=z_0$, we obtain
\begin{equation*}
\varepsilon''(z_0)
=
\frac{2!}{2\pi i}
\int_{C_z}
\frac{\varepsilon(\theta)}{(\theta-z_0)^3}\,d\theta.
\end{equation*}

Recall that $\varepsilon$ is holomorphic on $\widetilde D_0$, and the disk $\{z: |z-z_0|\le n/7\}$ is contained in $\widetilde D_0$.
By~\eqref{eq:g0-epsilon-bound}, we have 
$
|\varepsilon(z)|<0.051$
on $\{z: |z-z_0|\le n/7\}$.
Put $\rho=n/7$.
Since the length of $C_z$ is $2\pi n/7$, it follows that
\begin{equation*}
\begin{aligned}
|\varepsilon''(z_0)|
&\le
\frac{2!}{2\pi}
\int_{C_z}
\frac{|\varepsilon(\theta)|}{|\theta-z_0|^3}\,|d\theta|\le
\frac{2!}{2\pi}
\frac{0.051}{\rho^3}\,(2\pi\rho)=
\frac{2!\,(0.051)}{\rho^2}=
\frac{2!\,(0.051)}{(n/7)^2}
=
\frac{4.998}{n^2}
<
\frac5{n^2}.
\end{aligned}
\end{equation*}
Since $z_0$ was arbitrary, this estimate holds whenever
$|z_0-n|\le n/2+1$.
Let $z\in D_0$.
Then, for $-1\le s\le1$,
$
|z+s-n|
\le |z-n|+1
\le n/2+1,
$
and hence
\begin{equation*}
|\varepsilon''(z+s)|
\le \frac{4.998}{n^2}
< \frac5{n^2}.
\end{equation*}
Taylor's formula with integral remainder gives
\begin{equation*}
\varepsilon(z\pm1)-\varepsilon(z)\mp \varepsilon'(z)
=
\int_0^1(1-s)\varepsilon''(z\pm s)\,ds.
\end{equation*}
Hence, for $z\in D_0$,  we have $\left|\Delta^2\varepsilon(z)\right|<5/n^2$. Indeed,
\begin{align}\label{eq:g0-epsilon-difference}
\left|\Delta^2\varepsilon(z)\right|
&=
|\varepsilon(z-1)-2\varepsilon(z)+\varepsilon(z+1)|=
\left|\int_0^1(1-s)
\bigl(\varepsilon''(z+s)+\varepsilon''(z-s)\bigr)\,ds\right|\nonumber\\
&=
\left|\int_{-1}^1(1-|s|)\varepsilon''(z+s)\,ds\right|\le
\int_{-1}^1(1-|s|)\,|\varepsilon''(z+s)|\,ds\nonumber\\
&\le 
\frac{4.998}{n^2} \int_{-1}^1(1-|s|)\,ds
=\frac{4.998}{n^2}
<\frac5{n^2}.
\end{align}

\smallskip
\noindent
\textbf{\emph{Estimate of $\Delta^2\log\Gamma(2z+1)-2/z$.}}
The gamma functional equation gives
\begin{equation*}
\Gamma(2z+3)
=
(2z+2)(2z+1)\Gamma(2z+1),\qquad 
\Gamma(2z+1)
=
(2z)(2z-1)\Gamma(2z-1).
\end{equation*}
Consequently,
\begin{equation*}
\frac{\Gamma(2z+3)\Gamma(2z-1)}
{\Gamma(2z+1)^2}
=
\frac{(2z+1)(2z+2)}{(2z)(2z-1)}
=
\frac{(1+1/z)(1+1/(2z))}
{1-1/(2z)}.
\end{equation*}

On the disk $|z-n|<2n/3-1$, we have
$\Re z>n/3+1>1$.
The principal logarithms of
$1+1/z$, $1+1/(2z)$, and $1-1/(2z)$
are therefore holomorphic.
Using the branches fixed above, the logarithmic
identity holds on the positive real interval of this disk.
The identity theorem then gives
\begin{align}
\Delta^2\Log\Gamma(2z+1)
&=
\Log\Gamma(2z-1)-2\Log\Gamma(2z+1)
+\Log\Gamma(2z+3)
\nonumber\\
&=
\Log\left(1+\frac1z\right)
+
\Log\left(1+\frac1{2z}\right)
-
\Log\left(1-\frac1{2z}\right).
\label{eq:g0-gamma-identity}
\end{align}

For $z\in D_0$, we have $|z|\ge n/2\ge6000$.
Expanding the logarithms into their convergent
power series yields
\begin{align*}
\Delta^2\Log\Gamma(2z+1)-\frac2z
={}&
\left[
\Log\left(1+\frac1z\right)-\frac1z
\right]+
\left[
\Log\left(1+\frac1{2z}\right)
-
\Log\left(1-\frac1{2z}\right)
-\frac1z
\right]\\
={}&
\sum_{j\ge2}\frac{(-1)^{j+1}}{jz^j}
+
2\sum_{j\ge1}
\frac1{(2j+1)(2z)^{2j+1}}.
\end{align*}
For the first series,
\begin{align*}
\left|
\sum_{j\ge2}\frac{(-1)^{j+1}}{jz^j}
\right|
&\le
\frac12\sum_{j\ge2}\frac1{|z|^j}=
\frac1{2|z|^2(1-|z|^{-1})}\le
\frac2{n^2(1-2/n)}
<
\frac3{n^2}.
\end{align*}
For the second series,
\begin{align*}
\left|
2\sum_{j\ge1}
\frac1{(2j+1)(2z)^{2j+1}}
\right|
&\le
\frac23\sum_{j\ge1}\frac1{(2|z|)^{2j+1}}=
\frac1{12|z|^3\left(1-\frac1{4|z|^2}\right)}\le
\frac2{3n^3(1-1/n^2)}
<
\frac1{n^2}.
\end{align*}
The last inequalities hold for $n\ge12000$.
Combining these estimates, we obtain
\begin{equation}\label{eq:g0-gamma-difference}
\left|
\Delta^2\Log\Gamma(2z+1)-\frac2z
\right|
<
\frac4{n^2},
\qquad z\in D_0.
\end{equation}

\smallskip
\noindent
\textbf{\emph{Proof of~\eqref{eq:g0-approximation}.}}
Applying~\eqref{eq:g0-p-difference},
\eqref{eq:g0-h0-difference},
\eqref{eq:g0-epsilon-difference},
and~\eqref{eq:g0-gamma-difference}
to~\eqref{g-Q}, we obtain
\begin{equation*}
|g_0(z)-Q(z)|
<
\frac{1+8+5+4}{n^2}
=
\frac{18}{n^2}
<
\frac{20}{n^2}.
\end{equation*}
This proves the first inequality
in~\eqref{eq:g0-approximation}.

For $z\in D_0$,
\begin{equation*}
|z|\ge\frac n2,
\qquad
\frac n2\le\Re z\le\frac{3n}{2}.
\end{equation*}
Furthermore, $|z-n|^2\le n^2/4$ implies
\begin{equation*}
|z|^2
\le
2n\Re z-\frac{3n^2}{4}
\le
\frac32n\Re z,
\end{equation*}
where the second inequality uses $\Re z\le3n/2$.
Consequently,
\begin{equation}\label{eq:g0-inverse-bounds}
\Re\frac nz
=
\frac{n\Re z}{|z|^2}
\ge\frac23,
\qquad
\left|\frac nz\right|\le2.
\end{equation}
Since
\begin{equation*}
nQ(z)=\frac{2n}{z}-\frac{2n}{z(1+w)},
\end{equation*}
equations~\eqref{eq:g0-w-bounds}
and~\eqref{eq:g0-inverse-bounds} give
\begin{align*}
\Re\bigl(nQ(z)\bigr)
&\ge
2\Re\frac nz
-
2\left|\frac nz\right|\frac1{|1+w|}\ge
\frac43-\frac47
=
\frac{16}{21}.
\end{align*}
Similarly,
\begin{equation*}
|nQ(z)|
=
2\left|\frac nz\right|
\left|\frac{w}{1+w}\right|
\le4.
\end{equation*}

The approximation bound already proved gives
\begin{equation*}
|n(g_0(z)-Q(z))|
\le
\frac{20}{n}
\le
\frac1{600}.
\end{equation*}
It follows that
\begin{align*}
\Re\bigl(ng_0(z)\bigr)
&\ge
\Re\bigl(nQ(z)\bigr)
-
|n(g_0(z)-Q(z))|\ge
\frac{16}{21}-\frac1{600}
=
\frac{3193}{4200}
>
\frac58,
\end{align*}
and
\begin{equation*}
|ng_0(z)|
\le
|nQ(z)|+|n(g_0(z)-Q(z))|
\le
4+\frac1{600}
=
\frac{2401}{600}
<
\frac92.
\end{equation*}
This proves all three inequalities
in~\eqref{eq:g0-approximation}.

\smallskip
\noindent
\textbf{\emph{Proof of~\eqref{54}.}}
At $z=n$, the number $w=W(2n/\pi)$ is real
and satisfies $w\ge6$.
Hence
\begin{equation*}
nQ(n)
=
\frac{2w}{1+w}
=
2-\frac2{1+w}
\ge
2-\frac27
=
\frac{12}{7}.
\end{equation*}
Since $g_0(n)$ is real, the approximation bound gives
\begin{equation*}
ng_0(n)
\ge
nQ(n)-|n(g_0(n)-Q(n))|
\ge
\frac{12}{7}-\frac1{600}
=
\frac{7193}{4200}
>
\frac54.
\end{equation*}
This proves~\eqref{54} and completes the proof.
\end{proof}

\subsection{Estimate of the iterated logarithmic ratios $g_m(z)$}
In this subsection, we obtain estimates for the iterates $g_m(z)$ initialized at $g_0(z)$. 
We begin by defining $g_{m+1}(z)$ via the recurrence
\begin{equation}\label{eq:rece_g_j+1}
g_{m+1}(z)
=
2g_m(z)
-h(g_m(z-1))
+2h(g_m(z))
-h(g_m(z+1)),
\end{equation}
where $h(z)=\Log(1-e^{-z})$.
The following lemma shows that these iterates remain well-defined on successively smaller disks and satisfy uniform analytic bounds.

\begin{lemma}\label{lem:analytic_escape}
Let $n\ge 6000$. 
For every $0 \le m \le \lceil \log_2 n\rceil$, define
\begin{equation*}
D_m=\{z\in\mathbb C:|z-n|<2^{-{m}/{3}-1}n\}.
\end{equation*}
Suppose that there exists a function $g_0$ such that $g_0$ is holomorphic on $D_0$, real-valued on $D_0 \cap \mathbb{R}$, and
\begin{equation*}
\Re\bigl(n g_0(z)\bigr)\ge \frac58,
\quad
|n g_0(z)|\le \frac92
\quad \text{for}\ z\in D_0\quad \textrm{and}\quad
n g_0(n)\ge \frac54.
\end{equation*}
Then, for every $0 \le m \le \lceil \log_2 n\rceil$, the iterates $g_{m}$ defined in~\eqref{eq:rece_g_j+1} satisfy the following properties:
\begin{itemize}
\item[\rm(i)] $g_m(z)$ is well-defined and holomorphic on $D_m$;
\item[\rm(ii)] $\Re g_m(z)>0$ for all $z\in D_m$;
\item[\rm(iii)] $g_m(z)$ is real-valued on $D_m\cap\mathbb R$;
\item[\rm(iv)] for all $z\in D_m$, we have 
$\left|2^{-m}g_m(z)-g_0(z)\right|\le S_m<{1500}/{n^2}$,
where $S_0=0$ and
\begin{equation*}
S_m:=
\sum_{\ell=0}^{m-1}
\frac{3\cdot2^{-\ell}}{\delta_\ell^2}, \quad \textrm{where} \quad \delta_\ell=2^{-\ell/3-1}(1-2^{-1/3})n.
\end{equation*} 
\end{itemize}
\end{lemma}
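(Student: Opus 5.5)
The proof is by induction on $m$, tracking the normalized error
\[
e_m(z):=2^{-m}g_m(z)-g_0(z),\qquad e_0\equiv0 .
\]
The hypothesis at step $m$ is that (i)--(iii) hold and $|e_m|\le S_m$ on $D_m$. First I would dispose of the bookkeeping. Put $q=2^{-1/3}$. The terms of $S_m$ are $3\cdot 2^{-\ell}/\delta_\ell^2=\frac{12}{(1-q)^2n^2}\,q^{\ell}$, so
\[
S_m<\frac{12}{(1-q)^3n^2}\approx\frac{1367}{n^2}<\frac{1500}{n^2}
\]
for all $m$. Given the error bound, (ii) is immediate:
\[
\Re\bigl(2^{-m}g_m\bigr)\ge \Re g_0-S_m\ge \frac{5}{8n}-\frac{1500}{n^2}>0 \qquad (n\ge6000).
\]
For (iii), assume $g_m$ is real on $D_m\cap\mathbb R$ and positive there by (ii). The recurrence \eqref{eq:rece_g_j+1} then only involves $h$ at positive reals, where $\Log(1-e^{-t})$ is the real logarithm. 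So $g_{m+1}$ is real on $D_{m+1}\cap\mathbb R$.

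For (i), note that $R_m-R_{m+1}=\delta_m$, and $\delta_m>1$ for $m\le\lceil\log_2n\rceil$. Indeed $\delta_m\gtrsim 0.103\,n^{2/3}\cdot 2^{-1/3}$, which is far larger than $1$ when $n\ge6000$. Hence $z\pm1\in D_m$ whenever $z\in D_{m+1}$. Since $\Re g_m>0$, we have $e^{-g_m}\ne1$, so $h(g_m)$ is holomorphic on $D_m$ via a branch of $\Log(1-e^{-g})$ on the image. Therefore $g_{m+1}$ is holomorphic on $D_{m+1}$.

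The heart of the matter is the increment. From the recurrence,
\[
e_{m+1}-e_m=-2^{-(m+1)}\Delta^2 h(g_m).
\]
To get the increment $3\cdot 2^{-m}/\delta_m^2$ it therefore suffices to show $|\Delta^2 h(g_m)(z)|\le 6/\delta_m^2$ on $D_{m+1}$. Because $\Delta^2$ kills affine functions, I would replace $h(g_m)$ by the normalized function
\[
F_m(z)=h(g_m(z))-m\log2+\log n=\Log\bigl(ng_0+ne_m\bigr)+k\bigl(g_m(z)\bigr),\qquad k(g):=\Log\frac{1-e^{-g}}{g},
\]
and then show $\sup_{D_m}|F_m|\le M$ for a modest explicit constant $M$.

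The first term in $F_m$ is controlled as follows. We have $ng_0$ in the region $\{\Re\ge 5/8,\ |\cdot|\le 9/2\}$ and $|ne_m|\le 1500/n\le 1/4$. So $|\log|ng_0+ne_m||\le\log(4.75/0.375)$, and the argument is bounded by $\arccos(0.375/4.75)<\pi/2$.

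For the second term, $g=2^m(g_0+e_m)$ with $2^m\le 2n$. This gives $|g|\le 2\cdot 4.75\approx 9.5$, and $g$ lies in a sector $|\arg g|\le\arccos(0.375/4.75)$. On that compact set in the right half-plane, $(1-e^{-g})/g$ is nonvanishing and stays away from the negative axis, so $|k|$ is bounded by an explicit constant.

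Next I apply Cauchy's estimate. For $z\in D_{m+1}$ and $|s|\le1$, the disk of radius $\rho=\delta_m-1$ about $z+s$ lies in $D_m$, so $|F_m''(z+s)|\le 2M/\rho^2$. By the kernel representation \eqref{eq:g0-centered-difference}, this gives $|\Delta^2F_m(z)|\le 2M/(\delta_m-1)^2$.

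The main obstacle is the constants. The target bound $6/\delta_m^2$ requires $M\le 3(1-1/\delta_m)^2$, which is essentially $M\le 3$ since $\delta_m\gtrsim 70$. The crude estimate of $\log|ng_0|$ above already costs up to about $2.5$ in modulus, and the argument term costs about $1.5$. So I expect to need a sharper normalization.

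The first thing I would try is to subtract the constant $\log(ng_0(n))$, or better the midpoint of the range of $\log|ng_0|$, instead of $\log n$. I would also exploit that only $\Re$ and $\Im$ of the logarithm enter, bounding $|\log|\cdot||$ and $|\arg|$ separately and then combining them in modulus. In addition I would bound $k$ sharply, using $k(g)\approx -g/2$ for small $g$, and I may subtract its affine part as well. If this is still too tight, the fallback is to apply Cauchy on a slightly larger disk, using that $D_{m+1}$ sits well inside $D_m$. A second fallback is a Borel--Carathéodory-type bound, using that $\Re F_m$ is bounded above on $D_m$, which gives control of $|F_m''|$ with smaller constants.
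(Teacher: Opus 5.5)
Your skeleton is the paper's: the same induction on $e_m$ and the same normalization $F_m=h(g_m)-\log(2^m/n)$ (the paper's $H_m$). You also use the same Cauchy estimate on disks of radius $\delta_m-1$ about $z+s$, and the same kernel identity for $\Delta^2$. The bookkeeping for $S_m$ and your arguments for (i)--(iii) are fine.

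\textbf{The gap.} The one estimate that carries the lemma is left unproved: you need $\sup_{D_m}|F_m|\le 3(1-1/\delta_m)^2$ to get the increment $3\cdot2^{-m}/\delta_m^2$ in (iv). Moreover, your proposed route cannot deliver it.

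Write $v=n2^{-m}g_m$. Your inductive information is only $\Re v\ge 3/8$ and $|v|\le 19/4$. Splitting $F_m=\Log v+k(g_m)$ then discards a large cancellation. Take the admissible point $v=0.375+4.735i$ with $2^m/n\approx1$ (e.g.\ $n=2^{13}+1$, $m=13$). There $|\Log v|\approx2.16$ and $k(g_m)\approx-1.38-2.10i$, while $F_m\approx0.18-0.61i$. At the conjugate point, $\Im\Log v$ and $\Im k$ take the values $\mp1.49$ and $\pm2.10$. Hence no choice of additive constants brings the sum of the two separate suprema below about $3.6>3$.

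Your fallbacks do not help either:
\begin{itemize}
\item $\delta_m-1$ is already the largest radius available uniformly (take $z$ near $\partial D_{m+1}$ and $s=\pm1$).
\item A Borel--Carath\'eodory bound needs, besides $\sup\Re F_m$, a lower bound on $\Re F_m$ at the centres. That lower bound is exactly the missing ingredient, and even with it the classical constant is too large.
\end{itemize}

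\textbf{The missing idea.} Estimate the product $u_m:=(1-e^{-g_m})/(2^m/n)$ directly, since $F_m=\Log u_m$.
\begin{itemize}
\item Real part: $\Re(1-e^{-g})=1-e^{-\Re g}\cos(\Im g)\ge 1-e^{-\Re g}$. Your bound on $e_m$ gives $\Re g_m\ge\frac38\cdot2^m/n$. Since $x\mapsto(1-e^{-3x/8})/x$ is decreasing and $2^m/n\le2$, this yields $\Re u_m\ge(1-e^{-3/4})/2>1/4$.
\item Modulus: $(1-e^{-g})/g=\int_0^1e^{-sg}\,ds$ has modulus at most $1$, so $|u_m|\le|v|<5$.
\end{itemize}
Therefore $|\log|u_m||<\log5<5/3$ and $|\Arg u_m|<\pi/2$, so $|F_m|<5\sqrt2/3<5/2$. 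Cauchy then gives $|F_m''(z+s)|\le 5/(\delta_m-1)^2\le 6/\delta_m^2$, because $\delta_m>20$. With this in place, the rest of your induction goes through as written.
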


\begin{proof}
We will prove (i--iv) by induction on $m$.
Before that, we give a basic estimate.
For $0\le m< \lceil \log_2 n\rceil$, let 
$R_m=2^{-{m}/{3}-1}n$.
Then
\begin{equation}\label{deltam}
\delta_m=R_m-R_{m+1}
=2^{-m/3-1}(1-2^{-1/3})n.
\end{equation}
Then $2^m<n$, and
$2^{-m/3}>n^{-1/3}$.
Since $2^{-1/3}<4/5$, it follows that
\begin{equation*}
\delta_m
=2^{-m/3-1}(1-2^{-1/3})n
>
\frac{n^{2/3}}{10}
>20,
\end{equation*}
where the last inequality follows from $n\ge6000$.
In particular,
\begin{equation*}
z\in D_{m+1}
\quad\Longrightarrow\quad
z-1,z,z+1\in D_m.
\end{equation*}
More generally, if $z\in D_{m+1}$ and $-1\le s\le1$, then
the disk centered at $z+s$ with radius $\delta_m-1$ is contained
in $D_m$.

Now we start the induction.
For $m=0$, these assertions follow directly from the hypotheses,
and (iv) is trivial since $S_0=0$.
Now assume that the assertions hold at some $m<\lceil \log_2 n\rceil$.
Since $n\ge 6000$,
by the induction hypothesis (iv), we have
\begin{equation*}
\left|\frac{g_m(z)}{2^m/n}-ng_0(z)\right|
\le nS_m
<\frac{1500}{n}
\le\frac14.
\end{equation*}
Since $\Re\bigl(n g_0(z)\bigr)\ge \frac58$ and $|n g_0(z)|\le \frac92$ we have 
\begin{equation}\label{eq:mod_w}
\Re \left(\frac{g_m(z)}{2^m/n}\right)
\ge
\frac58-\frac14
=
\frac38,
\quad
\text{and} \quad 
\left|\frac{g_m(z)}{2^m/n}\right|
\le
\frac92+\frac14
=
\frac{19}{4}
<5.
\end{equation}

\medskip
\noindent
\textbf{\emph{The auxiliary function $H_m$.}}
Let
\begin{equation}\label{dis}
H_m(z):=h(g_m(z))-\log\left(2^m/n\right)
=
\Log\left(
\frac{1-e^{-g_m(z)}}{2^m/n}
\right).
\end{equation}
We derive an upper bound for $|\Delta^2 H_m(z)|$, and we use it later in the induction.

We first verify that~\eqref{dis} is well-defined.
By~\eqref{eq:mod_w}, we have
\begin{equation}\label{eq:Re_v}
\Re \left(\frac{1-e^{-g_m(z)}}{2^m/n}\right)
\ge\frac{1-e^{-3\cdot 2^m/(8n)}}{2^m/n}
\ge
\frac{1-e^{-3/4}}{2}
>
\frac14.
\end{equation}
Hence, the
principal logarithm is well-defined and holomorphic there.
Moreover, 
\begin{equation*}
\left|\left(\frac{1-e^{-g_m(z)}}{2^m/n}\right)\right|=
\left|\frac{g_m(z)}{2^m/n}\int_0^1e^{-sg_m(z)}\,ds\right|\le|2^{-m}ng_m(z)|<5.
\end{equation*}
Combining~\eqref{eq:mod_w} with~\eqref{eq:Re_v}, 
\begin{equation*}
\frac14<\left|\frac{1-e^{-g_m(z)}}{2^m/n}\right|<5,\quad \left|\log\left|\frac{1-e^{-g_m(z)}}{2^m/n}\right|\right|<5/3,\quad \left|\arg \frac{1-e^{-g_m(z)}}{2^m/n}\right|<\pi/2<5/3
\end{equation*}
Hence,
\begin{align}\label{eq:mod_Hj}
|H_m(z)|
=
\left|\Log \left(\frac{1-e^{-g_m(z)}}{2^m/n}\right)\right|=\left|\log \left |\frac{1-e^{-g_m(z)}}{2^m/n}\right|+ i \Arg\left(\frac{1-e^{-g_m(z)}}{2^m/n}\right)\right|
<
\frac{5\sqrt2}{3}
<
\frac52.
\end{align}
Thus $H_m$ is holomorphic on $D_m$ and satisfies
$|H_m(z)|<5/2$ for all $z\in D_m$.

Fix $z\in D_{m+1}$ and $-1\le s\le1$.
Then the disk centered at $z+s$ with radius
$\delta_m-1$ is contained in $D_m$.
By~\eqref{eq:mod_Hj},
applying the Cauchy estimate (see~\cite[Theorem 10.26]{Rud87}) to $H_m$, and since $\delta_m>20$, we obtain
\begin{equation}\label{eq:secdiff_Hj}
|H_m''(z+s)|
\le
\frac{2!}{(\delta_m-1)^2}\cdot \frac{5}{2}
=
\frac{5}{(\delta_m-1)^2}
\le
\frac{6}{\delta_m^2}.
\end{equation}
The identity
\begin{align*}
\Delta^2H_m(z)&=\big(H_m(z+1)-H_m(z)\big)-\big(H_m(z)-H_m(z-1)\big)\\
&=\int_{0}^{1}H_m'(z+t)dt-\int_{-1}^{0}H_m'(z+t)dt \\
&=\int_{0}^{1}\left(H_m'(z)+\int_{0}^{t}H_m''(z+s)ds\right)dt
-\int_{-1}^{0}\left(H_m'(z)+\int_{0}^{t}H_m''(z+s)ds\right)dt \\
&=\int_{0}^{1}\left(\int_{0}^{t}H_m''(z+s)ds\right)dt
-\int_{-1}^{0}\left(\int_{0}^{t}H_m''(z+s)ds\right)dt \\
&=\int_{0}^{1}\left(\int_{s}^{1}dt\right)H_m''(z+s)ds
+\int_{-1}^{0}\left(\int_{-1}^{s}dt\right)H_m''(z+s)ds \\
&=\int_{-1}^{1}(1-|s|)H_m''(z+s)ds,
\end{align*}
therefore gives
\begin{equation}\label{eq:sec_Hj}
|\Delta^2H_m(z)|
\le
\frac6{\delta_m^2}
\int_{-1}^{1}(1-|s|)\,ds
=
\frac6{\delta_m^2}.
\end{equation}

\medskip
\noindent
\textbf{\emph{Proof of (i) and (iv).}}
Since $\Re g_m>0$ on $D_m$, for every $u\in D_m$,
\begin{equation*}
\Re(1-e^{-g_m(u)})
=
1-e^{-\Re g_m(u)}
\cos(\Im g_m(u))
>0.
\end{equation*}
Hence
\begin{equation*}
h(g_m(u))
=
\Log(1-e^{-g_m(u)})
\end{equation*}
is well-defined and holomorphic on $D_m$.

For $z\in D_{m+1}$, since
$
z-1,z,z+1\in D_m,
$
we have 
\begin{equation}\label{eq:proof-3.4}
g_{m+1}(z)
=
2g_m(z)
-h(g_m(z-1))
+2h(g_m(z))
-h(g_m(z+1))
\end{equation}
is well-defined and holomorphic on $D_{m+1}$.
This proves (i).

Since $h(g_m(z))=H_m(z)+\log (2^m/n)$ and $\log  (2^m/n)$ is independent of $z$, by~\eqref{eq:proof-3.4},
we can obtain that
\begin{equation}\label{eq:g_j+1}
g_{m+1}
=
2g_m-\Delta^2H_m.
\end{equation}
Dividing~\eqref{eq:g_j+1} by $2^{m+1}$ yields
\begin{equation*}
2^{-(m+1)}g_{m+1}
=
2^{-m}g_m
-
2^{-(m+1)}\Delta^2H_m.
\end{equation*}
Hence, by~\eqref{eq:sec_Hj},
\begin{equation}\label{eq:bound_g_j+1}
\left|
2^{-(m+1)}g_{m+1}(z)
-
2^{-m}g_m(z)
\right|
\le
\frac{3\cdot2^{-m}}{\delta_m^2}.
\end{equation}
Using the induction hypothesis, we have
\begin{equation}\label{eq:bound_S_j+1}
\begin{aligned}
\left|
2^{-(m+1)}g_{m+1}(z)-g_0(z)
\right|
&\le \left|
2^{-(m+1)}g_{m+1}(z)
-
2^{-m}g_m(z)
\right|
+
\left|
2^{-m}g_m(z)-g_0(z)
\right|
\\
&\le
\frac{3\cdot2^{-m}}{\delta_m^2}
+
S_m
=
S_{m+1}.
\end{aligned}
\end{equation}

It suffices to prove 
$$S_m=
\sum_{\ell=0}^{m-1}
\frac{3\cdot2^{-\ell}}{\delta_\ell^2}\le \frac{1500}{n^2}.$$
By~\eqref{deltam},
$\delta_\ell
=2^{-\ell/3-1}(1-2^{-1/3})n$,
we have
\begin{equation*}
\frac{3\cdot2^{-\ell}}{\delta_\ell^2}
=
\frac{12}{n^2(1-2^{-1/3})^2}
\,2^{-\ell/3}.
\end{equation*}
Therefore
\begin{equation*}
\sum_{\ell\ge0}
\frac{3\cdot2^{-\ell}}{\delta_\ell^2}
=
\frac{12}{n^2(1-2^{-1/3})^2}
\sum_{\ell\ge0}2^{-\ell/3}
=
\frac{12}{n^2(1-2^{-1/3})^3}.
\end{equation*}
Since $2^{-1/3}<4/5$,
\begin{equation*}
\frac{12}{n^2(1-2^{-1/3})^3}
<
\frac{1500}{n^2}.
\end{equation*}
Hence
\begin{equation}\label{1500}
\left|
2^{-(m+1)}g_{m+1}(z)-g_0(z)
\right|
\le
S_{m+1}
<
\frac{1500}{n^2},
\end{equation}
which proves (iv).

\medskip
\noindent
\textbf{\emph{Proof of (ii--iii)}}
By~\eqref{1500} and $n\ge 6000$,
we have
\begin{equation*}
\left|2^{-(m+1)}ng_{m+1}(z)-ng_0(z)\right|
<
\frac{1500}{n}
\le
\frac14.
\end{equation*}
Since $\Re\bigl(n g_0(z)\bigr)\ge \frac58$ and $|n g_0(z)|\le \frac92$ we have 
\begin{equation*}
\Re \left(\frac{g_{m+1}(z)}{2^{m+1}/n}\right)
\ge
\frac58-\frac14
=
\frac38.
\end{equation*}
Thus
\begin{equation*}
\Re g_{m+1}(z)>0
\quad \text{for}\ z\in D_{m+1}.
\end{equation*}

Now let $x\in D_{m+1}\cap\mathbb R$.
Then $x-1,x,x+1\in D_m\cap\mathbb R$.
By the induction hypothesis,
\begin{equation*}
g_m(x-1),\quad g_m(x),\quad g_m(x+1)
\end{equation*}
are positive real numbers. Hence
\begin{equation*}
h(g_m(x-1)),\quad
h(g_m(x)),\quad
h(g_m(x+1))
\end{equation*}
are also real. It follows from the recurrence~\eqref{eq:rece_g_j+1} that $g_{m+1}(x)\in\mathbb R$.
Thus all the induction assertions hold at step $m+1$.
This completes the proof.
\end{proof}

Combining the results of this section, we obtain the following proposition.

\begin{proposition}\label{prop:large-index}
Let $(\lambda_n)_{n\ge0}$ be the sequence defined in~\eqref{eq:F(x)}, and let
\begin{equation*}
A_{m,n}=(\mathcal L^m\lambda)_n,
\quad
r_{m,n}
=
\frac{A_{m,n}^2}{A_{m,n-1}A_{m,n+1}},
\end{equation*}
defined whenever the denominator is positive. 
Then, for every integer $n\ge12000$,
\begin{equation*}
A_{m,n}>0,
\quad
0\le m\le \left\lceil\log_2 n\right\rceil,
\end{equation*}
and the ratio $r_{\left\lceil\log_2 n\right\rceil,n}$ is well-defined and satisfies
$r_{\left\lceil\log_2 n\right\rceil,n}> e>(3+\sqrt5)/2$.
\end{proposition}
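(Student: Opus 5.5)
The plan is to chain Lemma~\ref{lem:explicit_analytic_initial_ratio}, Lemma~\ref{lem:analytic_escape} and Lemma~\ref{lem:local_ratio_identification}. Fix an integer $n\ge12000$ and put $M=\lceil\log_2 n\rceil$. For $0\le m\le M$ we take the disks $D_m=\{z:|z-n|<2^{-m/3-1}n\}$ of Lemma~\ref{lem:analytic_escape}, with $R_m=2^{-m/3-1}n$. Then $D_0\subset\{|z-n|\le n/2\}$, so Lemma~\ref{lem:explicit_analytic_initial_ratio} applies on $D_0$. It gives that $g_0$ is holomorphic there and real on real points. It also gives $\Re(ng_0)>5/8$, $|ng_0|<9/2$, $ng_0(n)>5/4$, and the identification $g_0(k)=\log(\lambda_k^2/(\lambda_{k-1}\lambda_{k+1}))$ at integers $k\in D_0$. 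These are exactly the hypotheses of Lemma~\ref{lem:analytic_escape}, which needs only $n\ge6000$. That lemma then yields, for every $0\le m\le M$, that $g_m$ is holomorphic on $D_m$, real on $D_m\cap\mathbb R$, has $\Re g_m>0$, and satisfies $|2^{-m}g_m(z)-g_0(z)|<1500/n^2$.

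Next I check the hypotheses of Lemma~\ref{lem:local_ratio_identification} with $a_k=\lambda_k$, which is strictly positive by Lemma~\ref{lem:theta_moments_and_interpolation}(iii). We have $R_m\le n$, and the proof of Lemma~\ref{lem:analytic_escape} shows $R_m-R_{m+1}=\delta_m>20>1$ for $m<M$. Condition (i) holds because $g_m$ is real and has positive real part on real points of $D_m$. Condition (ii) is the defining recurrence~\eqref{eq:rece_g_j+1}: on real positive arguments the principal $\Log(1-e^{-t})$ coincides with the real $h$. Condition (iii) comes from Lemma~\ref{lem:explicit_analytic_initial_ratio}(iii). The lemma then gives $A_{m,k}>0$ for integers $k\in D_{m-1}$, which includes $k=n$ for every $0\le m\le M$. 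It also gives $A_{M,n\pm1}>0$ and $g_M(n)=g_{M,n}=\log r_{M,n}$, so $r_{M,n}$ is well defined.

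It remains to show $g_M(n)>1$. From the approximation bound,
\[
2^{-M}g_M(n)\ge g_0(n)-\frac{1500}{n^2}>\frac{5}{4n}-\frac{1500}{n^2}\ge\frac{1}{n}\Bigl(\frac54-\frac18\Bigr),
\]
using $1500/n\le1/8$ for $n\ge12000$. Since $2^M\ge n$, we get $g_M(n)\ge\frac{9}{8}\cdot\frac{2^M}{n}>1$. Hence $r_{M,n}>e>2.7>(3+\sqrt5)/2\approx2.618$.

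The only delicate point is bookkeeping. One must confirm that the index range $m\le M$ in Lemma~\ref{lem:analytic_escape} is consistent with the requirement $2^m<n$ used there to get $\delta_m>20$; this is needed only for $m<M$, where $2^m<2^{\lceil\log_2 n\rceil}$ and hence $2^m<n$ holds. One must also check that the branch of $h$ in the complex recurrence matches the real one on the real axis.
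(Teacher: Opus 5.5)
Your proposal is correct and follows the paper's proof essentially step for step: Lemma~\ref{lem:explicit_analytic_initial_ratio} supplies the hypotheses of Lemma~\ref{lem:analytic_escape}, then Lemma~\ref{lem:local_ratio_identification} yields positivity of $A_{m,n}$ and the identification $g_M(n)=\log r_{M,n}$, and finally part (iv) together with $2^M\ge n$ and $1500/n\le 1/8$ gives $g_M(n)\ge 9/8>1$. One small slip in your bookkeeping remark: $2^m<n$ for $m<M$ follows from $m\le M-1<\log_2 n$, not merely from $2^m<2^{M}$; this point is internal to Lemma~\ref{lem:analytic_escape} anyway and does not affect your argument.
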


\begin{proof}
Fix an integer $n\ge12000$. First, Lemma~\ref{lem:explicit_analytic_initial_ratio} implies that $g_0$ in~\eqref{eq:g0(z)} is holomorphic on the disk $D_0 := \{z\in \mathbb{C} : |z-n| < n/2\}$ and real-valued on $D_0\cap \mathbb{R}$. Moreover, for all $z\in D_0$,
\begin{equation*}
\Re\bigl(n g_0(z)\bigr) \ge \frac{5}{8}, \quad |n g_0(z)| \le \frac{9}{2}, \quad \text{and} \quad n g_0(n) \ge \frac{5}{4}.
\end{equation*}
These estimates verify all initial hypotheses of Lemma~\ref{lem:analytic_escape}.

Next, define the iterates $g_m$ for $0\le m < \lceil \log_2 n\rceil$ by the recurrence
\begin{equation*}
g_{m+1}(z) = 2g_m(z) - h(g_m(z-1)) + 2h(g_m(z)) - h(g_m(z+1)),
\end{equation*}
where $h(z) = \operatorname{Log}(1-e^{-z})$. 
Since $n\ge12000 > 6000$, Lemma~\ref{lem:analytic_escape} ensures that, for each $0\le m \le \lceil \log_2 n\rceil$, the function $g_m$ is well-defined and holomorphic on $D_m := \{z\in\mathbb C : |z-n| < R_m\}$ with $R_m =2^{-m/3-1}n$, real-valued on $D_m\cap\mathbb R$, and satisfies $R_m - R_{m+1} > 1$ for all $0\le m < \lceil \log_2 n\rceil$.

By Lemma~\ref{lem:explicit_analytic_initial_ratio}, we have $g_0(k) = \log r_{0,k}$ for all $k\in D_0\cap \mathbb{Z}$. Hence, Lemma~\ref{lem:local_ratio_identification} yields
\begin{equation*}
A_{m,k} > 0 \quad \text{for all } k\in D_{m-1}\cap \mathbb{Z} \text{ and } 1\le m\le \lceil \log_2 n\rceil.
\end{equation*}
Because $n\in D_{m-1}$ for each $1\le m\le \lceil \log_2 n\rceil$, this implies $A_{m,n} > 0$ for all $1\le m\le \lceil \log_2 n\rceil$. 
Combining this with $A_{0,n} = \lambda_n > 0$ (see Lemma~\ref{lem:theta_moments_and_interpolation}), we conclude that
\begin{equation*}
A_{m,n} > 0 \quad \text{for all } 0\le m \le \lceil \log_2 n\rceil.
\end{equation*}

Finally, since $n\in D_{\lceil \log_2 n\rceil}$, Lemma~\ref{lem:local_ratio_identification} establishes the identification $g_{\lceil \log_2 n\rceil}(n) = \log r_{{\lceil \log_2 n\rceil},n}$. Setting $m = {\lceil \log_2 n\rceil}$ and $z = n$ in part~(iv) of Lemma~\ref{lem:analytic_escape}, we obtain
\begin{equation*}
2^{-{\lceil \log_2 n\rceil}}g_{\lceil \log_2 n\rceil}(n) \ge g_0(n) - \frac{1500}{n^2}.
\end{equation*}
Since $2^{\lceil \log_2 n\rceil} \ge n$, $n g_0(n) \ge 5/4$, and $1500/n \le 1500/12000=1/8$, it follows that
\begin{equation*}
g_{\lceil \log_2 n\rceil}(n) \ge \frac{2^{\lceil \log_2 n\rceil}}{n} \left( n g_0(n) - \frac{1500}{n} \right) \ge 1 \cdot \left( \frac{5}{4} - \frac{1}{8} \right) = \frac{9}{8} > 1.
\end{equation*}
Consequently,
\begin{equation*}
r_{{\lceil \log_2 n\rceil},n}=
e^{g_{\lceil \log_2 n\rceil}(n)}
>e
>\frac{3+\sqrt5}{2}.
\end{equation*}
This completes the proof.
\end{proof}

%=================================================
\section{The case $n\le 12000$ and $m\le 14$}\label{sec:finite}
%=================================================
In this section, we establish the positivity of $A_{m,n}$ for every integer $1\le n\le12000$ and $0\le m\le14$, together with the ratio bound $r_{14,n}>(3+\sqrt5)/2$.

We first review the necessary background on Legendre polynomials and Gauss--Legendre integration.
See, for instance,~\cite[p.~887]{AS64}.

\begin{lemma}\label{lem:gauss_legendre_facts}
Let $P_d(z)$ be the Legendre polynomial normalized by $P_d(1)=1$,
and let $z_1, \ldots, z_d \in (-1, 1)$ be its distinct zeros.
Define
\begin{equation*}
\omega_j=
\frac{2}
{(1-z_j^2)P_d'(z_j)^2},
\quad 1\le j\le d.
\end{equation*}
Then $\omega_j>0$, $\sum_{j=1}^d \omega_j=2$, and
\begin{equation}\label{eq:gauss-exact}
\int_{-1}^{1}p(z)\,dz
=
\sum_{j=1}^{d}\omega_j p(z_j)
\end{equation}
for every polynomial $p$ of degree at most $2d-1$.
Moreover, for $-1\le z\le1$, we have
\begin{equation}\label{eq:legendre-bounds}
|P_d(z)|\le1,
\quad
|P_d'(z)|\le d^2,
\quad
|P_d''(z)|\le d^4 .
\end{equation}
\end{lemma}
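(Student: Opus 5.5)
This lemma collects classical facts, so I would prove each claim from standard properties of Legendre polynomials rather than look for anything new.

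\textbf{Exactness.} First I would recall that the $P_d$ are orthogonal on $[-1,1]$ with respect to $dz$, and that $P_d$ has $d$ simple zeros $z_j\in(-1,1)$ (orthogonality to all lower-degree polynomials forces $d$ sign changes inside the interval). For a polynomial $p$ of degree at most $2d-1$, I would divide $p=qP_d+r$ with $\deg q,\deg r\le d-1$. Orthogonality kills $\int qP_d$, and $p(z_j)=r(z_j)$. It therefore suffices to have exactness for degree $\le d-1$. I would obtain this by defining the weights as integrals of the Lagrange basis polynomials $\ell_j$ at the nodes, since $r=\sum r(z_j)\ell_j$.

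\textbf{Weights.} Next, positivity follows by applying exactness to $\ell_j^2$, which has degree $2d-2$. This gives $\omega_j=\int\ell_j^2>0$, and taking $p\equiv1$ gives $\sum\omega_j=2$. To identify $\omega_j$ with the closed form $2/((1-z_j^2)P_d'(z_j)^2)$, I would use the Christoffel--Darboux formula together with the three-term recurrence and the derivative identity $(1-z^2)P_d'=d(P_{d-1}-zP_d)$. This is the standard computation in Abramowitz--Stegun, which the paper cites. I expect this identification to be the only genuinely computational step, and I would simply cite it if allowed.

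\textbf{Sup-norm bounds.} For $|P_d|\le1$ I would use the Laplace integral $P_d(\cos\theta)=\frac1\pi\int_0^\pi(\cos\theta+i\sin\theta\cos\varphi)^d\,d\varphi$, whose integrand has modulus at most $1$. The derivative bounds follow from the Markov brothers' inequality: a polynomial of degree $d$ bounded by $1$ on $[-1,1]$ has derivative bounded by $d^2$. Applying it again to $P_d'/d^2$, which has degree $d-1$ and is bounded by $1$, gives $|P_d''|\le d^2(d-1)^2\le d^4$. Alternatively, one can note that $P_d^{(k)}$ attains its maximum at $z=1$, where $P_d'(1)=d(d+1)/2\le d^2$.
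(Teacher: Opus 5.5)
Your proposal is correct. The bound $|P_d|\le1$ via the Laplace integral is exactly what the paper does. For the quadrature facts, the paper simply cites Abramowitz--Stegun for everything: nodes, weight formula, positivity and exactness. You instead sketch the standard Gauss argument (division by $P_d$, orthogonality, interpolatory weights, and $\ell_j^2$ for positivity) and leave only the closed-form identification of $\omega_j$ to a citation, just as the paper does. Positivity is in any case immediate from the closed form, since the zeros are simple and $1-z_j^2>0$.

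The genuine difference is in the derivative bounds. You apply the Markov brothers' inequality twice, obtaining $|P_d'|\le d^2$ and $|P_d''|\le d^2(d-1)^2$. The paper instead telescopes the Legendre identity $P_{d+1}'-P_{d-1}'=(2d+1)P_d$ into
$P_d'=\sum_{0\le j<d,\ d-j\ \mathrm{odd}}(2j+1)P_j$.
Using only $|P_j|\le1$, this gives $|P_d'|\le d(d+1)/2$, which is sharp since it equals $P_d'(1)$. The same expansion with $|P_j'|\le j^2\le d^2$ then gives $|P_d''|\le d^3(d+1)/2$.

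Your route is shorter and applies to any polynomial bounded by $1$, but it rests on a nontrivial external theorem. The same is true of your alternative remark that $P_d^{(k)}$ peaks at $z=1$, which needs the Gegenbauer maximum principle. The paper's route is fully elementary: it uses only the recurrence and the already-established bound $|P_j|\le1$, and it gives somewhat sharper constants (about $d^4/2$ rather than $d^4$ for the second derivative). Either suffices, since only $|P_{256}''|\le 256^4$ is used later in the node certification.
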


\begin{proof}
For the standard properties of the zeros of $P_d$, the positivity and explicit formula for the Gauss--Legendre weights, and the exactness of~\eqref{eq:gauss-exact} up to degree $2d-1$, we refer the reader to~\cite[p. 887]{AS64}.
It remains to verify the bounds in~\eqref{eq:legendre-bounds}.

The classical integral representation (see~\cite[p.~86]{Sze75})
\begin{equation*}
P_d(\cos\theta)
=
\frac{1}{\pi}
\int_0^\pi
\bigl(\cos\theta+i\sin\theta\cos u\bigr)^d\,du
\end{equation*}
gives $|P_d(z)|\le1$ for any $-1\le z\le1$.

The following recurrence relation follows from the standard Legendre recurrence and differential relations (see~\cite[p.~165]{SLL09})
\begin{equation*}
P'_{d+1}(z)-P'_{d-1}(z)=(2d+1)P_d(z).
\end{equation*}
Then by telescoping, we have
\begin{equation*}
P'_d(z)
=
\sum_{\substack{0\le j<d\\ d-j\ {\rm odd}}}
(2j+1)P_j(z).
\end{equation*}	
Hence,
\begin{equation*}
|P'_d(z)|
\le
\sum_{\substack{0\le j<d\\ d-j\ {\rm odd}}}(2j+1)
=
\frac{d(d+1)}{2}
\le d^2.
\end{equation*}
Applying the same bound to $P'_d$ gives
\begin{align*}
|P''_d(z)|
&\le
\sum_{\substack{0\le j<d\\ d-j\ {\rm odd}}}
(2j+1)|P'_j(z)| \le
d^2
\sum_{\substack{0\le j<d\\ d-j\ {\rm odd}}}
(2j+1)
=
\frac{d^3(d+1)}{2}
\le d^4 .
\end{align*}
This proves \eqref{eq:legendre-bounds}.
\end{proof}

Let 
\begin{equation}\label{eq:Kt}
K(t):=4e^{t/2}\sum_{j\ge1}(2\pi^2 j^4e^{4t}-3\pi j^2e^{2t})e^{-\pi j^2e^{2t}}.
\end{equation}
Recall that by~\eqref{eq:lambda-moment},
\begin{equation}\label{eq:lambda-moment-K}
\lambda_n=
\frac1{(2n)!}
\int_0^\infty
K(t)t^{2n}\,dt,
\end{equation}

Romik~\cite{Rom21} and O'Sullivan~\cite{Osu21} investigated kernel representations and coefficient asymptotics for the Riemann xi-function. 
In the following lemma, 
we establish explicit uniform bounds for $K$, 
an approximation of $K$ by its truncation $K_{16}$, 
and an estimate for the tail integral of $K$.

\begin{lemma}\label{lem:finite_kernel_bounds}
Let 
\begin{equation}\label{eq:Kt16}
K_{16}(t):=4e^{t/2}\sum_{j=1}^{16}(2\pi^2 j^4e^{4t}-3\pi j^2e^{2t})e^{-\pi j^2e^{2t}}.
\end{equation}
\begin{itemize}
\item[\rm(i)] Both $K$ and $K_{16}$ are holomorphic on a neighborhood of every compact
subset of $\{t\in\mathbb C:|\Im t|\leq 1/8 \}$, and, for $t=x+iy$, $x\geq-1/8$, $|y|\leq 1/8$,
\begin{equation}\label{eq:K_sup}
|K(t)|,\ |K_{16}(t)|\leq64\pi^2
\exp\bigl(9x/2-\pi\cos(1/4) e^{2x}\bigr).
\end{equation}
\item[\rm(ii)]  For real $t\geq0$,
\begin{equation}\label{eq:K-K16}
0\leq K(t)-K_{16}(t)\leq 10^{-300} K(t),\quad
\text{i.e.,} \
K_{16}(t)\leq K(t)\leq\frac{K_{16}(t)}{1-10^{-300}}.
\end{equation}
\item[\rm(iii)] For every $0\leq n\leq12016$,
\begin{equation}\label{eq:moment_K}
0\leq\int_5^\infty K(t)t^{2n}\,dt\leq \frac{64\pi^2\,5^{2n}\exp(45/2-\pi e^{10})}{\pi e^{10}}.
\end{equation}
\end{itemize}
\end{lemma}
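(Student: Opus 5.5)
The proof treats each part separately; all three parts rest on termwise bounds for the theta-type series. For (i), we write $t=x+iy$ with $|y|\le 1/8$ and bound each summand
\[
4e^{t/2}\bigl(2\pi^2j^4e^{4t}-3\pi j^2e^{2t}\bigr)e^{-\pi j^2e^{2t}} .
\]
We have $|e^{t/2}|=e^{x/2}$ and $|e^{4t}|=e^{4x}$. Also $\Re(e^{2t})=e^{2x}\cos(2y)\ge e^{2x}\cos(1/4)$, so
\[
|e^{-\pi j^2e^{2t}}|\le e^{-\pi j^2\cos(1/4)e^{2x}} .
\]
Hence the $j$-th term is at most
\[
4e^{x/2}\bigl(2\pi^2j^4e^{4x}+3\pi j^2e^{2x}\bigr)e^{-\pi j^2 c e^{2x}},\qquad c=\cos(1/4)>0.96 .
\]
Since $x\ge-1/8$, we have $e^{2x}\ge e^{-1/4}$, so $3\pi j^2e^{2x}\le 3\pi e^{1/2}j^4e^{4x}$. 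This gives a bound $C\,j^4e^{9x/2}e^{-\pi j^2ce^{2x}}$ with $C=4(2\pi^2+3\pi e^{1/2})<32\pi^2$ (the constant should be checked numerically). Factor out the $j=1$ term: $e^{-\pi j^2ce^{2x}}=e^{-\pi ce^{2x}}e^{-\pi(j^2-1)ce^{2x}}$, and $ce^{2x}\ge 0.96e^{-1/4}>0.74$. Then $\sum_j j^4e^{-\pi(j^2-1)\cdot0.74}<2$, which yields the constant $64\pi^2$. The same estimate applies verbatim to $K_{16}$ (fewer terms). Holomorphy follows from locally uniform convergence of this majorant series on compact subsets of a slightly larger strip $|\Im t|<\pi/4$, exactly as in Lemma~\ref{lem:theta_moments_and_interpolation}.

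For (ii), positivity of each term for real $t\ge0$ is the argument already given in Lemma~\ref{lem:theta_moments_and_interpolation}(iii): $v_j=\pi j^2e^{2t}\ge\pi>3/2$, so $2v_j^2-3v_j>0$. Hence $K-K_{16}\ge0$ and $K\ge$ its $j=1$ term. For the upper bound, we bound
\[
K-K_{16}\le 4e^{t/2}\sum_{j\ge17}2\pi^2j^4e^{4t}e^{-\pi j^2e^{2t}}
\]
and compare it with the $j=1$ term, which is at least $4e^{t/2}\pi^2e^{4t}e^{-\pi e^{2t}}$ because $2v_1^2-3v_1\ge v_1^2/\ldots$; concretely, $2\pi^2e^{4t}-3\pi e^{2t}\ge \pi^2e^{4t}(2-3/\pi)>\pi^2e^{4t}$. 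The ratio is then at most
\[
2\sum_{j\ge17}j^4e^{-\pi(j^2-1)e^{2t}}\le 2\sum_{j\ge17}j^4e^{-\pi(j^2-1)} .
\]
This is dominated by its first term, $2\cdot17^4e^{-288\pi}\approx 10^{-388}$, and a geometric-ratio comparison as in~\eqref{17-5} makes this well below $10^{-300}$. Since the $j=1$ term is itself at most $K$, we get $K-K_{16}\le10^{-300}K$.

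For (iii), nonnegativity of the integral follows from (ii). For the upper bound we use (i) with $y=0$, where $\cos(1/4)$ can be replaced by $1$ since $y=0$; rerunning the bound of (i) gives $|K(t)|\le 64\pi^2e^{9t/2-\pi e^{2t}}$. Then
\[
\int_5^\infty K(t)t^{2n}\,dt\le64\pi^2\int_5^\infty t^{2n}e^{9t/2-\pi e^{2t}}\,dt .
\]
Write the exponent as $\psi(t)=2n\log t+9t/2-\pi e^{2t}$. For $t\ge5$ and $n\le12016$ we have $\psi'(t)=2n/t+9/2-2\pi e^{2t}\le -\pi e^{2t}$, since $2\pi e^{10}$ vastly exceeds $2n/5+9/2$. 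Therefore
\[
\int_5^\infty e^{\psi}\,dt\le e^{\psi(5)}\int_5^\infty e^{-\pi(e^{2t}-e^{10})}\,dt ,
\]
and using $e^{2t}-e^{10}\ge 2e^{10}(t-5)$ the remaining integral is at most $1/(2\pi e^{10})\le 1/(\pi e^{10})$. This gives exactly the stated bound $64\pi^2 5^{2n}e^{45/2-\pi e^{10}}/(\pi e^{10})$.

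The main obstacle is only bookkeeping: getting the explicit constant $64\pi^2$ uniformly down to $x=-1/8$ in (i), where $e^{2x}$ is smallest and the $j\ge2$ tail and the $3\pi$ term are relatively largest. I would verify that constant carefully with crude numerical bounds, allowing slack by bounding $|2\pi^2j^4e^{4t}-3\pi j^2e^{2t}|\le 2\pi^2j^4e^{4x}(1+3e^{1/2}/(2\pi))$.
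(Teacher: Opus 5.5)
Your proposal is correct and follows essentially the same route as the paper: termwise bounds with the $j=1$ factor pulled out and a geometric-series comparison of the tail for (i) and (ii), and a derivative bound on the exponent $2n\log t+9t/2-\pi e^{2t}$ for (iii). The only slip is in (iii): integrating $\psi'(s)\le-\pi e^{2s}$ over $[5,t]$ gives $\psi(t)\le\psi(5)-\tfrac{\pi}{2}(e^{2t}-e^{10})$, not $\psi(5)-\pi(e^{2t}-e^{10})$, so your displayed integral inequality is false as written; however, combining the corrected exponent with $e^{2t}-e^{10}\ge 2e^{10}(t-5)$ gives exactly $\int_5^\infty e^{-\pi e^{10}(t-5)}\,dt=1/(\pi e^{10})$, so the stated bound still follows.
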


\begin{proof}
We prove the three assertions separately. 
For convenience, write
\begin{equation*}
K(t)=\sum_{j\ge1}\widetilde{K}_j(t),
\end{equation*}
where $\widetilde{K}_j(t)=4e^{t/2}(2v_j(t)^2-3v_j(t))e^{-v_j(t)}$ with $v_j(t)=\pi j^2e^{2t}$.

\medskip
\noindent
\textbf{\emph{Holomorphy of $K$ and $K_{16}$.}}
Clearly, the function $\widetilde{K}_j(t)$
is entire for each fixed $j$. 
Hence $K_{16}$, being a finite sum of entire functions, is entire.
For the infinite series $K$, let
\begin{equation*}
C\subset\{t\in\mathbb C:|\Im t|\le 1/8\}
\end{equation*}
be compact. 
Since $1/8<\pi/4$,
we may choose $\eta>0$ such that
$1/8+\eta<\pi/4$.
Then there exist real numbers $r_1<r_2$ such that
$C$ is contained in the open set
\begin{equation*}
U
=
\left\{
t=x+iy:
r_1<x<r_2,\quad |y|<1/8+\eta
\right\}.
\end{equation*}

For $t=x+iy\in U$, we have $\Re v_j(t)=\pi j^2e^{2x}\cos(2y)$.
Since $x>r_1$ and $|y|<1/8+\eta<\pi/4$,
we obtain
\begin{equation*}
\Re v_j(t)
\ge
\pi j^2 e^{2r_1}
\cos(2(1/8+\eta)).
\end{equation*}
Consequently,
\begin{equation*}
|e^{-v_j(t)}|
=
e^{-\Re v_j(t)}
\le
e^{-\pi j^2 e^{2r_1}
\cos(2(1/8+\eta))}.
\end{equation*}
Furthermore,
\begin{equation*}
|v_j(t)|
=
\pi j^2e^{2x}
\le
\pi j^2 e^{2r_2},
\quad
|e^{t/2}|=e^{x/2}\le e^{r_2/2}.
\end{equation*}
Therefore there exists a constant $c>0$, depending only on $U$,
such that
\begin{equation*}
|\widetilde{K}_j(t)|
\le
cj^4e^{-\pi e^{2r_1}\cos(2(1/8+\eta))\,j^2}
\quad \text{for}\ t\in U.
\end{equation*}
Since
\begin{equation*}
\sum_{j\ge1}j^4e^{-\pi e^{2r_1}\cos(2(1/8+\eta))\,j^2}<\infty,
\end{equation*}
the Weierstrass $M$-test~\cite[Theorem~7.10]{Rud76} implies that
$\sum_{j\ge1}\widetilde{K}_j(t)$
converges uniformly on $U$.
Since each partial sum is holomorphic, Weierstrass's convergence theorem implies that $K$ is holomorphic on $U$. 
Because $C$ was arbitrary, $K$ is holomorphic on
a neighborhood of every compact subset of
$\{t:|\Im t|\le1/8\}$.

\medskip
\noindent
\textbf{\emph{Proof of \eqref{eq:K_sup}.}}
For $t=x+iy$, $x\geq-1/8$, $|y|\leq 1/8$, we have $|e^{t/2}|=e^{x/2}$,
\begin{equation*}
|v_j(t)|=\pi j^2 e^{2x},\quad |e^{-v_j(t)}|=e^{-\Re v_j(t)}=e^{-\pi j^2 e^{2x}\cos(2y)}.
\end{equation*}
Using $|2v_j^2-3v_j|\le2|v_j|^2+3|v_j|$, we can obtain that
\begin{align*}
|K(t)|
&\le
4e^{x/2}
\sum_{j\ge1}
\left[2(\pi e^{2x})^2j^4+3\pi e^{2x}j^2 \right]\exp(-\pi j^2 e^{2x}\cos(2y))
\\
&=
4e^{x/2}(\pi e^{2x})^2e^{-\pi e^{2x}\cos(2y)}
\sum_{j\ge1}
\left(
2j^4+\frac{3j^2}{\pi e^{2x}}
\right)
\exp(-\pi e^{2x}\cos(2y)(j^2-1)).
\end{align*}
Since $\pi e^{2x}>2$ for $x\ge -1/8$ and $j^2\le j^4$ for $j\ge1$, then
\begin{equation*}
2j^4+\frac{3j^2}{\pi e^{2x}}
\le
2j^4+\frac32j^2
\le
\frac72j^4.
\end{equation*}
Since $\pi e^{2x}\cos(2y)>2$ for $x\geq-1/8$, $|y|\leq 1/8$, then
\begin{equation*}
e^{-\pi e^{2x}\cos(2y)(j^2-1)}
\le
e^{-2(j^2-1)}.
\end{equation*}
Hence
\begin{equation*}
|K(t)|
\le
4e^{x/2}(\pi e^{2x})^2e^{-\pi e^{2x}\cos(2y)}
\frac72
\sum_{j\ge1}
j^4e^{-2(j^2-1)}.
\end{equation*}

For any $j\ge1$, we obtain
\begin{equation*}
\frac{(j+1)^4e^{-2((j+1)^2-1)}}{j^4e^{-2(j^2-1)}}
=
\left(\frac{j+1}{j}\right)^4
e^{-2(2j+1)}
\le
16e^{-6}.
\end{equation*}
Then we have
\begin{equation*}
\frac72 \cdot \sum_{j\ge1}j^4e^{-2(j^2-1)}
\le
\frac72 \cdot \sum_{j\ge1}(16e^{-6})^{j-1}
=
\frac72 \cdot \frac{1}{1-16e^{-6}}<16.
\end{equation*}
Consequently,
\begin{align*}
|K(t)|
&\le
64e^{x/2}(\pi e^{2x})^2e^{-\pi e^{2x}\cos(2y)}\\
&\le
64e^{x/2}(\pi e^{2x})^2e^{-e^{2x}\pi\cos(1/4)}\\
&=64\pi^2
\exp\left(
\frac92x-e^{2x}\pi\cos\frac{1}{4} 
\right).
\end{align*}

The same argument applies to $K_{16}$, since after taking absolute values the truncated sum is bounded by the corresponding infinite positive majorant. This proves~\eqref{eq:K_sup}.

\medskip
\noindent
\textbf{\emph{Proof of \eqref{eq:K-K16}.}}
We now restrict to the real axis and assume $t\ge0$. 
In this case, we have $\pi e^{2t}\ge \pi >3$ and $v_j(t)=\pi j^2e^{2t}>3$. 
Hence,
\begin{equation*}
\widetilde{K}_j(t)=4e^{t/2}(2v_j(t)^2-3v_j(t))e^{-v_j(t)}>0.
\end{equation*}
and
\begin{equation}\label{positive}
K(t)>0,
\quad
K_{16}(t)>0,
\quad
K(t)-K_{16}(t)
=
\sum_{j\ge17}\widetilde{K}_j(t)\ge0.
\end{equation}
Since $\widetilde{K}_1(t)
=4e^{t/2}(2(\pi e^{2t})^2-3\pi e^{2t})e^{-\pi e^{2t}}$,
we have
\begin{align*}
\frac{\widetilde{K}_j(t)}{\widetilde{K}_1(t)}
=
j^2
\frac{2\pi e^{2t}j^2-3}{2\pi e^{2t}-3}
e^{-\pi e^{2t}(j^2-1)}.
\end{align*}
Because $\pi e^{2t}>3$, then
\begin{equation*}
\frac{2\pi e^{2t}j^2-3}{2\pi e^{2t}-3}
\le
2j^2.
\end{equation*}
Therefore,
\begin{equation}\label{eq:KjK1}
\frac{\widetilde{K}_j(t)}{\widetilde{K}_1(t)}
\le
2j^4e^{-3(j^2-1)}.
\end{equation}

For $j\ge17$, we have
\begin{equation*}
\frac{(j+1)^4e^{-3((j+1)^2-1)}}{j^4e^{-3(j^2-1)}}
=
\left(\frac{j+1}{j}\right)^4
e^{-3(2j+1)}
\le
\left(\frac{18}{17}\right)^4e^{-105}
<
\frac12.
\end{equation*}
Thus,
\begin{equation*}
\sum_{j=17}^{\infty}j^4e^{-3(j^2-1)}
\le
17^4e^{-3(17^2-1)}
\sum_{m\ge0}2^{-m}
=
2\cdot17^4e^{-3(17^2-1)}.
\end{equation*}
Using \eqref{eq:KjK1}, we therefore obtain
\begin{align*}
\frac{K(t)-K_{16}(t)}{\widetilde{K}_1(t)}
&=
\sum_{j=17}^{\infty}
\frac{\widetilde{K}_j(t)}{\widetilde{K}_1(t)}
\le
2\sum_{j=17}^{\infty}
j^4e^{-3(j^2-1)}
\le
4\cdot17^4e^{-864}.
\end{align*}
Moreover, $4\cdot17^4e^{-864}<10^{-300}$.
This inequality can also be verified directly by outward interval arithmetic.
Hence
\begin{equation*}
K(t)-K_{16}(t)
\le
10^{-300}\widetilde{K}_1(t).
\end{equation*}
Since all summands are positive,
\begin{equation*}
\widetilde{K}_1(t)\le K(t),
\end{equation*}
and therefore
\begin{equation*}
0\le
K(t)-K_{16}(t)
\le
10^{-300}K(t).
\end{equation*}
This proves \eqref{eq:K-K16}.

\medskip
\noindent
\textbf{\emph{Proof of \eqref{eq:moment_K}.}}
Let $t\ge0$ be real. 
In this case, since $y = 0$,
as in the proof of~\eqref{eq:K_sup}, we have 
\begin{equation}\label{eq:real-kernel-bound}
0<K(t)
\le
64\pi^2
\exp\left(
\frac92t-\pi e^{2t}
\right).
\end{equation}

Fix an integer $0\le n\le12016$.
Multiplying \eqref{eq:real-kernel-bound} by $t^{2n}$, we get
\begin{equation*}
K(t)t^{2n}
\le
64\pi^2
\exp\left(
2n\log t+\frac92t-\pi e^{2t}
\right).
\end{equation*}
Define $g_n(t)=2n\log t+\frac92t-\pi e^{2t}$.
Then
\begin{equation*}
K(t)t^{2n}
\le
64\pi^2e^{g_n(t)}.
\end{equation*}

For $t\ge5$, we have
\begin{equation*}
g_n'(t)
=
\frac{2n}{t}
+\frac92
-2\pi e^{2t}.
\end{equation*}
Since $n\le12016$ and $t\ge5$, the numerical inequality
\begin{equation*}
\frac{2n}{t}+\frac92
\le
\frac{2\cdot12016}{5}+\frac92
<
\pi e^{10}
\end{equation*}
therefore gives
\begin{align*}
g_n'(t)
\le
\pi e^{10}-2\pi e^{2t}
\le
-\pi e^{2t}
\le
-\pi e^{10}.
\end{align*}
Integrating this inequality from $5$ to $t$ gives
\begin{equation*}
g_n(t)-g_n(5)
=
\int_5^t g_n'(s)\,ds
\le
-\pi e^{10}(t-5),
\end{equation*}
and hence
\begin{equation*}
g_n(t)
\le
g_n(5)-\pi e^{10}(t-5).
\end{equation*}
Now
\begin{equation*}
g_n(5)
=
2n\log5+\frac{45}{2}-\pi e^{10}.
\end{equation*}
It follows that, for $t\ge5$,
\begin{equation*}
K(t)t^{2n}
\le
64\pi^2
5^{2n}
\exp\left(
\frac{45}{2}-\pi e^{10}
\right)
e^{-\pi e^{10}(t-5)}.
\end{equation*}
Integrating over $[5,\infty)$, we obtain
\begin{align*}
\int_5^\infty K(t)t^{2n}\,dt
&\le
64\pi^2
5^{2n}
\exp\left(
\frac{45}{2}-\pi e^{10}
\right)
\int_5^\infty
e^{-\pi e^{10}(t-5)}\,dt
\\
&=
64\pi^2
5^{2n}
\exp\left(
\frac{45}{2}-\pi e^{10}
\right)
\frac{1}{\pi e^{10}}.
\end{align*}
Since $K(t)>0$ on the real axis,
\begin{equation*}
\int_5^\infty K(t)t^{2n}\,dt\ge0.
\end{equation*}
This proves \eqref{eq:moment_K} and completes the proof.
\end{proof}

The following lemma provides an explicit error bound for the
Gauss--Legendre quadrature used in our numerical verification
of the finite cases.

\begin{lemma}\label{lem:finite_gaussian_rule}
Let $z_1, \ldots, z_d $ be the distinct zeros of the Legendre polynomial $P_d$ and $$\omega_j=
\frac{2}
{(1-z_j^2)P_d'(z_j)^2}.$$
If a function $f(z)$ is holomorphic in a neighborhood of the closed disk $\{z: \vert{}z-c\vert{} \le \rho\}$ for some $c\in\mathbb{C}$ and $\rho>0$, and satisfies $\vert{}f(z)\vert{} \le M$ on this disk, then
\begin{equation}\label{eq:F9-detailed}
\left|
\int_{c-\rho/2}^{c+\rho/2} f(t)\,dt
-
\frac{\rho}{2}\sum_{j=1}^{d}\omega_j f(c+\rho z_j/2)
\right|
\le
4\rho M\,4^{-d}.
\end{equation}
\end{lemma}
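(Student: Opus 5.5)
Rescale to the interval $[-1,1]$ and bound the Gauss--Legendre error by the error of the best polynomial approximation to $f$, which we control with a Chebyshev/Legendre expansion. Put $F(s)=f(c+\rho s/2)$. Then $\int_{c-\rho/2}^{c+\rho/2} f(t)\,dt=\frac{\rho}{2}\int_{-1}^1F(s)\,ds$, and the quadrature sum equals $\frac{\rho}{2}\sum_j\omega_jF(z_j)$. The function $F$ is holomorphic on a neighborhood of $\{|s|\le 2\}$ and satisfies $|F|\le M$ there. It therefore suffices to show
\begin{equation*}
E(F):=\Bigl|\int_{-1}^1F(s)\,ds-\sum_{j=1}^d\omega_jF(z_j)\Bigr|\le 8M\,4^{-d}.
\end{equation*}

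The first step is the standard reduction. By Lemma~\ref{lem:gauss_legendre_facts}, the rule is exact on polynomials $p$ of degree at most $2d-1$, and $\omega_j>0$ with $\sum_j\omega_j=2$. Hence, for any such $p$,
\begin{equation*}
E(F)=E(F-p)\le \int_{-1}^1|F-p|+\sum_j\omega_j|F(z_j)-p(z_j)|\le 4\,\|F-p\|_{\infty,[-1,1]}.
\end{equation*}
So we need a polynomial $p$ of degree at most $2d-1$ with $\|F-p\|_\infty\le 2M\,4^{-d}$ on $[-1,1]$.

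The second step builds $p$ by truncating the Chebyshev series of $F$. The disk $|s|\le2$ contains the Bernstein ellipse $\mathcal E_r$ with foci $\pm1$ and semi-major axis $(r+r^{-1})/2=2$, that is, $r=2+\sqrt3$. The classical Cauchy-integral bound on this ellipse gives Chebyshev coefficients $|a_k|\le 2Mr^{-k}$. Truncating at degree $2d-1$ then yields
\begin{equation*}
\|F-p\|_\infty\le\sum_{k\ge 2d}2Mr^{-k}=\frac{2Mr^{-2d}}{1-r^{-1}}.
\end{equation*}
Since $r^2=7+4\sqrt3>13$ and $1/(1-r^{-1})<1.37$, this is well below $2M\,4^{-d}$. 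Combined with the first step, this gives the bound $4\rho M4^{-d}$ after multiplying by $\rho/2$. If we want to avoid citing the ellipse bound, we can prove it directly: substitute $s=(\zeta+\zeta^{-1})/2$, so that $a_k$ becomes a Laurent coefficient of $F((\zeta+\zeta^{-1})/2)$ on the annulus $r^{-1}\le|\zeta|\le r$, and apply Cauchy's estimate on $|\zeta|=r$.

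The one point needing care is checking that the image of $|\zeta|=r$ lies in $\{|s|\le 2\}$. This holds because $|(\zeta+\zeta^{-1})/2|\le(r+r^{-1})/2=2$. There is plenty of room in the constants, so a cruder choice such as $r=2$ (ellipse semi-axis $5/4$) already gives $\|F-p\|_\infty\le 2M\,2^{-2d}/(1-1/2)=4M\,4^{-d}$. That yields $E(F)\le 16M4^{-d}$, which is off by a factor of $2$, so the sharper $r=2+\sqrt3$ is the one to use. The main obstacle is therefore only bookkeeping the constants, and the Chebyshev-coefficient bound is the step we would write out carefully.
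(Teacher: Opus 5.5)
Your argument is correct. Its first step is exactly the reduction the paper uses: exactness on degree $\le 2d-1$, together with $\omega_j>0$ and $\sum_j\omega_j=2$, gives $E(F)\le 4\|F-p\|_{\infty,[-1,1]}$. In the paper this appears as $2\rho M4^{-d}$ for the integral plus $2\rho M4^{-d}$ for the quadrature sum.

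Where you differ is the choice of $p$. The paper simply takes the Taylor polynomial of $f$ at $c$ of degree $2d-1$. Cauchy's estimate on $|z-c|=\rho$ gives $|a_j|\le M\rho^{-j}$. Since the segment and all nodes satisfy $|t-c|\le\rho/2$, the remainder is at most $M\sum_{j\ge 2d}2^{-j}=2M4^{-d}$. This is precisely the sup-norm bound you need, with no Joukowski map or Bernstein ellipse. In your rescaled variable it amounts to expanding $F$ at $s=0$ with Cauchy radius $2$ and evaluating on $|s|\le 1$. So your concern that the crude ellipse $r=2$ misses by a factor of $2$ is moot: the even more naive Taylor choice lands exactly on the stated constant.

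What your Chebyshev route buys is a much faster rate. You get $\|F-p\|_\infty\le 2Mr^{-2d}/(1-r^{-1})$, decaying like $(r^2)^{-d}$ with $r^2=7+4\sqrt3\approx 13.9$ instead of $4^{-d}$. The cost is the ellipse coefficient bound $|a_k|\le 2Mr^{-k}$. Your Laurent-coefficient sketch does justify it, since the Joukowski map sends the whole closed annulus $r^{-1}\le|\zeta|\le r$, not just the circle $|\zeta|=r$, into $\{|s|\le 2\}$. The paper's route is more elementary and exactly sufficient for its purpose.
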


\begin{proof}
Since $f$ is holomorphic in a neighborhood of
$\{z:|z-c|\le \rho\}$,
it admits a Taylor expansion at $c$,
\begin{equation*}
f(z)
=
\sum_{j\ge0} a_j(z-c)^j,
\quad
a_j
=
\frac{1}{2\pi i}
\int_{|z-c|=\rho}
\frac{f(z)}{(z-c)^{j+1}}\,dz.
\end{equation*}
Taking absolute values and using $|f(z)|\le M$, we can obtain that
\begin{equation}\label{eq:cauchy-coefficient-bound}
|a_j|
\le
\frac{1}{2\pi}
\int_{|z-c|=\rho}
\frac{|f(z)|}{|z-c|^{j+1}}\,|dz|
\le
\frac{1}{2\pi}
\frac{M}{\rho^{j+1}}
\int_{|z-c|=\rho}|dz|
=\frac{M}{\rho^j} \quad \text{for $j\ge0$}.
\end{equation}

Write
\begin{equation*}
f(z)=R_1(z)+R_2(z),
\end{equation*}
where
\begin{equation*}
R_1(z)
=
\sum_{j=0}^{2d-1}a_j(z-c)^j,
\quad
\text{and} \
R_2(z)
=
\sum_{j\ge2d}a_j(z-c)^j.
\end{equation*}

Consider the integral $\int_{c-\rho/2}^{c+\rho/2} f(t)\,dt$.
Setting $t=c+\rho x/2$ gives
\begin{equation*}
\int_{c-\rho/2}^{c+\rho/2}R_1(t)\,dt
=
\frac{\rho}{2}\int_{-1}^{1}R_1(c+\rho x/2)\,dx.
\end{equation*}
Since $R_1$ has degree at most $2d-1$, the polynomial $R_1(c+\rho x/2)$
also has degree at most $2d-1$. 
By Lemma~\ref{lem:gauss_legendre_facts}, we have
\begin{equation*}
\int_{-1}^{1}R_1(c+\rho x/2)\,dx
=
\sum_{j=1}^{d}
\omega_j R_1(c+\rho z_j/2).
\end{equation*}
Therefore
\begin{equation}\label{eq:gauss-exact-polynomial}
\int_{c-\rho/2}^{c+\rho/2}R_1(t)\,dt
=
\frac{\rho}{2}\sum_{j=1}^{d}
\omega_j R_1(c+\rho z_j/2).
\end{equation}
Using $f=R_1+R_2$
and~\eqref{eq:gauss-exact-polynomial}, we have
\begin{align*}
&
\int_{c-\rho/2}^{c+\rho/2}f(t)\,dt
-
\frac{\rho}{2}\sum_{j=1}^{d}\omega_j f(c+\rho z_j/2)
\\
&=
\int_{c-\rho/2}^{c+\rho/2}
\bigl(R_1(t)+R_2(t)\bigr)\,dt
-
\frac{\rho}{2}\sum_{j=1}^{d}
\omega_j
\bigl(
R_1(c+\rho/2z_j)+R_2(c+\rho/2 z_j)
\bigr)
\\
&=
\left[
\int_{c-\rho/2}^{c+\rho/2}R_1(t)\,dt
-
\frac{\rho}{2}\sum_{j=1}^{d}\omega_j R_1(c+\rho z_j/2)
\right]
+
\left[
\int_{c-\rho/2}^{c+\rho/2}R_2(t)\,dt
-
\frac{\rho}{2}\sum_{j=1}^{d}\omega_j R_2(c+\rho z_j/2)
\right]
\\
&=
\int_{c-\rho/2}^{c+\rho/2}R_2(t)\,dt
-
\frac{\rho}{2}\sum_{j=1}^{d}\omega_j R_2(c+\rho z_j/2)
\end{align*}
Then
\begin{equation}\label{eq:error-split}
\left|
\int_{c-\rho/2}^{c+\rho/2}f(t)\,dt
-
\frac{\rho}{2}\sum_{j=1}^{d}\omega_j f(c+\rho z_j/2)
\right|
\le
\left|
\int_{c-\rho/2}^{c+\rho/2}R_2(t)\,dt
\right|
+
\frac{\rho}{2}
\left|
\sum_{j=1}^{d}\omega_jR_2(c+\rho z_j/2)
\right|.
\end{equation}

Hence, we only need to consider $R_2$.
By~\eqref{eq:cauchy-coefficient-bound}, we obtain that
\begin{equation}\label{eq:Taylor-tail-bound}
|R_2(t)|
\le
\sum_{j\ge2d}
|a_j|\,|t-c|^j
\le
M
\sum_{j\ge2d}
\left(\frac{|t-c|}{\rho}\right)^j
\le
M
\sum_{j\ge2d}
\left(\frac12\right)^j
=2M4^{-d}.
\end{equation}
Since the interval $[c-\rho/2,c+\rho/2]$ has length $\rho$, we have
\begin{equation}\label{eq:integral-tail-error}
\left|
\int_{c-\rho/2}^{c+\rho/2}R_2(t)\,dt
\right|
\le
\int_{c-\rho/2}^{c+\rho/2}|R_2(t)|\,dt
\le
2\rho M4^{-d}.
\end{equation}

By Lemma~\ref{lem:gauss_legendre_facts}, we have $-1<z_j<1$.
Then $|c+\rho z_j/2-c|\le \rho/2$.
Consequently,
\begin{equation*}
|R_2(c+\rho z_j/2)|
\le
2M4^{-d}.
\end{equation*}
By Lemma~\ref{lem:gauss_legendre_facts} again, we can obtain that
\begin{align}\label{eq:quadrature-tail-error}
\frac{\rho}{2}
\left|
\sum_{j=1}^{d}\omega_j R_2(c+\rho z_j/2)
\right|
&\le
\frac{\rho}{2}
\sum_{j=1}^{d}
\omega_j|R_2(c+\rho z_j/2)|
\le
\frac{\rho}{2}
\sum_{j=1}^{d}
\omega_j(2M4^{-d})\nonumber\\
&=
\rho M4^{-d}
\sum_{j=1}^{d}\omega_j
=
2\rho M4^{-d}. 
\end{align}

Substituting \eqref{eq:integral-tail-error} and
\eqref{eq:quadrature-tail-error} into \eqref{eq:error-split}, we obtain
\begin{align*}
\left|
\int_{c-\rho/2}^{c+\rho/2}f(t)\,dt
-
\frac{\rho}{2}\sum_{j=1}^{d}\omega_j f(c+\rho z_j/2)
\right|
\le
2\rho M4^{-d}
+
2\rho M4^{-d}
=
4\rho M4^{-d}.
\end{align*}
This proves \eqref{eq:F9-detailed}.
\end{proof}

We have estimated $\int_5^\infty K(t)t^{2n}\,dt$ in~\eqref{eq:moment_K}.
We now approximate $$\int_0^5K_{16}(t)t^{2n}\,dt.$$
We first partition $[0,5]$ into forty subintervals of length $1/8$: $[s/8, (s+1)/8]$, for $s=0,1,\ldots, 39$.
Using Lemma~\ref{lem:finite_gaussian_rule},
for each subinterval $[s/8, (s+1)/8]$, we construct a disk with center $c=(2s+1)/16$ and radius $\rho=1/8$,
which covers the subinterval $[s/8, (s+1)/8]$.
By~\eqref{eq:F9-detailed},
we estimate the integral of $K_{16}(t)t^{2n}$ in each subinterval.
Summing the estimates over all subintervals yields the estimate of $\int_0^5K_{16}(t)t^{2n}\,dt$.
Finally, we derive lower and upper bounds for $\lambda_n$.

\begin{lemma}\label{lem:finite_moment_enclosures}
Let $z_j$ be the zeros of the Legendre polynomial $P_{256}$
and
$$\omega_j=
\frac{2}
{(1-z_j^2)P_{256}'(z_j)^2}.$$
For $0\leq n\leq 12016$, define
\begin{equation}\label{eq:Qn}
Q_n=\frac{1}{16}\sum_{s=0}^{39}\sum_{j=1}^{256}
\omega_j\cdot \left(\frac{2s+1+z_j}{16}\right)^{2n} \cdot K_{16}\left(\frac{2s+1+z_j}{16}\right)
\end{equation}
and
\begin{equation*}
G_n(x)=n\log(x^2+1/64)+9x/2-\pi\cos(1/4)e^{2x}\quad
\text{for}\ x\geq -1/8.
\end{equation*}
\begin{itemize}
\item[\rm(i)] There exists a unique $x_n^\ast \in [1/8,\infty)$ such that
\begin{equation*}
G_n(x)\le G_n(x_n^\ast),\quad \text{for all}\ x\ge 1/8,
\end{equation*}
and equality holds if and only if $x=x_n^\ast$.
\item[\rm(ii)] 
Set
\begin{equation}\label{eq:bound_G}
M_n=64\pi^2\exp\left(\max\left(
9/16-n\log32,\, G_n(x_n^\ast)\right)\right).
\end{equation}
Then we have 
\begin{equation}\label{eq:bound_la_n}
\frac{Q_n-20M_n4^{-256}}{(2n)!}\leq\lambda_n
\leq
\frac{1}{(2n)!}\cdot
\left(
\frac{Q_n+20M_n4^{-256}}{1-10^{-300}}
+
\frac{64\pi^2\,5^{2n}\exp(45/2-\pi e^{10})}{\pi e^{10}}
\right).
\end{equation}
\end{itemize}
\end{lemma}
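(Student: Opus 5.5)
For part (i) I would show that $G_n$ is strictly concave on $[1/8,\infty)$ and tends to $-\infty$. Differentiating gives
\begin{equation*}
G_n'(x)=\frac{2nx}{x^2+1/64}+\frac92-2\pi\cos(1/4)e^{2x},\qquad
G_n''(x)=\frac{2n(1/64-x^2)}{(x^2+1/64)^2}-4\pi\cos(1/4)e^{2x}.
\end{equation*}
For $x\ge1/8$ we have $1/64-x^2\le0$, so $G_n''(x)<0$; this also covers $n=0$. Moreover $G_n(x)\to-\infty$ as $x\to\infty$, since the term $-\pi\cos(1/4)e^{2x}$ dominates. A continuous, strictly concave function on $[1/8,\infty)$ that tends to $-\infty$ attains its maximum at exactly one point $x_n^\ast$, which may be the endpoint $1/8$. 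Strict concavity also gives strict inequality $G_n(x)<G_n(x_n^\ast)$ for $x\ne x_n^\ast$.

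For part (ii) the main step, and the one needing the most care, is a uniform bound $|f(t)|\le M_n$ for $f(t)=t^{2n}K_{16}(t)$ on each closed disk $\{|t-c|\le1/8\}$ with $c=(2s+1)/16$. Write $t=x+iy$. On such a disk $|y|\le1/8$ and $x\ge c-1/8\ge-1/16$, so \eqref{eq:K_sup} of Lemma~\ref{lem:finite_kernel_bounds}(i) applies. Also $|t|^{2n}=(x^2+y^2)^n\le(x^2+1/64)^n$, hence
\begin{equation*}
|f(t)|\le64\pi^2e^{G_n(x)}.
\end{equation*}
I would then split on $x$:
\begin{itemize}
\item If $x\ge1/8$, part (i) gives $G_n(x)\le G_n(x_n^\ast)$.
\item If $-1/8\le x<1/8$, then $x^2+1/64\le1/32$, so $n\log(x^2+1/64)\le-n\log32$. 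Also $9x/2\le9/16$, and the exponential term is negative. Hence $G_n(x)\le9/16-n\log32$.
\end{itemize}
In both cases $|f|\le M_n$. Since $K_{16}$ is entire (Lemma~\ref{lem:finite_kernel_bounds}(i)), $f$ is holomorphic on a neighborhood of each disk.

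Next I would apply Lemma~\ref{lem:finite_gaussian_rule} with $\rho=1/8$ and $d=256$. The quadrature nodes are $c+\rho z_j/2=(2s+1+z_j)/16$ and the prefactor is $\rho/2=1/16$, so summing the rule over $s=0,\dots,39$ reproduces exactly $Q_n$ in \eqref{eq:Qn}. Each of the $40$ subintervals contributes an error of at most $4\cdot\frac18\cdot M_n4^{-256}$, so
\begin{equation*}
\Bigl|\int_0^5K_{16}(t)t^{2n}\,dt-Q_n\Bigr|\le20M_n4^{-256}.
\end{equation*}

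Finally I would use the moment formula \eqref{eq:lambda-moment-K}, splitting
\begin{equation*}
(2n)!\,\lambda_n=\int_0^5K(t)t^{2n}\,dt+\int_5^\infty K(t)t^{2n}\,dt.
\end{equation*}
The formula is valid also for $n=0$, via \eqref{eq:OSullivan}, which holds for $n\ge0$.
\begin{itemize}
\item Lower bound: $K>0$ on $[0,\infty)$, so the tail is nonnegative. By \eqref{eq:K-K16}, $K\ge K_{16}$ there, so $(2n)!\lambda_n\ge\int_0^5K_{16}t^{2n}\,dt\ge Q_n-20M_n4^{-256}$.
\item Upper bound: by \eqref{eq:K-K16}, $K\le K_{16}/(1-10^{-300})$, so $\int_0^5Kt^{2n}\,dt\le(Q_n+20M_n4^{-256})/(1-10^{-300})$. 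The tail $\int_5^\infty Kt^{2n}\,dt$ is bounded by \eqref{eq:moment_K}, which is valid because $n\le12016$.
\end{itemize}
Dividing by $(2n)!$ gives \eqref{eq:bound_la_n}.
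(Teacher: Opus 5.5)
Your proposal is correct and follows essentially the same route as the paper: you bound $|t^{2n}K_{16}(t)|\le 64\pi^2e^{G_n(x)}$ on the forty disks of radius $1/8$, split at $x=1/8$ to obtain $M_n$, apply Lemma~\ref{lem:finite_gaussian_rule} with $\rho=1/8$ and $d=256$ to get the total error $20M_n4^{-256}$, and then combine the comparison $K_{16}\le K\le K_{16}/(1-10^{-300})$ with the tail estimate. The differences are cosmetic. For part (i) you use strict concavity together with $G_n\to-\infty$, where the paper instead splits on the sign of $G_n'(1/8)$. You also explicitly justify the moment formula at $n=0$, which Lemma~\ref{lem:theta_moments_and_interpolation}(i) states only for $n>0$.
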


\begin{proof}
Partition $[0,5]$ into forty subintervals of length $1/8$: $[s/8, (s+1)/8]$, for $s=0,1,\ldots, 39$, 
with centers $c_s=(2s+1)/16$. 
For each $s$, consider the closed disk
\begin{equation*}
D_s:=\{z:|z-c_s|\le 1/8\}.
\end{equation*}
Since $c_s$ is real, every $z=x+iy\in D_s$ satisfies
$|y|\le 1/8$.
Moreover, since $c_s\ge1/16$,
we have 
$x\ge -1/16>-1/8$.
Hence every $D_s$ is contained in the strip 
$\{t=x+iy: x\geq-1/8,|y|\leq 1/8\}$.

We now bound the function $K_{16}(t) t^{2n}$ in the strip $\{t=x+iy: x\geq-1/8,|y|\leq 1/8\}$.
By~\eqref{eq:K_sup},
\begin{align}\label{eq:proof54-basic-bound}
|K_{16}(x+iy)|\cdot|x+iy|^{2n}&\leq64\pi^2
\exp\bigl(9x/2-\pi\cos(1/4) e^{2x}\bigr)\cdot|x+iy|^{2n}\nonumber\\
&\le
64\pi^2
(x^2+1/64)^n
\exp\bigl(9x/2-\pi\cos(1/4) e^{2x}\bigr)
=64\pi^2 e^{G_n(x)}.
\end{align}

\medskip
\noindent
\textbf{\emph{Proof of (i).}}
We now show that 
\begin{equation*}
G_n(x)=n\log(x^2+1/64)+9x/2-\pi\cos(1/4)e^{2x}
\end{equation*}
has a unique maximum for $x\ge 1/8$.
Differentiating $G_n(x)$ with respect to $x$ yields
\begin{equation*}
G_n'(x)
=
\frac{2nx}{x^2+1/64}
+\frac92
-2\pi\cos(1/4) e^{2x},
\end{equation*}
and
\begin{equation*}
G_n''(x)
=
\frac{2n(1/64-x^2)}
{(x^2+1/64)^2}
-4\pi\cos(1/4) e^{2x}.
\end{equation*}
It is easy to check that $G_n''(x)<0$ for any $x\ge 1/8$.
Therefore $G_n'$ is strictly decreasing on $[1/8,\infty)$, and
\begin{equation*}
G_n'(x)\longrightarrow-\infty
\quad (x\to\infty).
\end{equation*}

If $G_n'(1/8)\le0$,
set $x_n^\ast=1/8$.
Since $G_n'$ is strictly decreasing, 
$G_n'(x)<0$ for any $x\ge 1/8$.
Thus $x_n^\ast=1/8$ is the unique maximizer of
$G_n(x)$ on $[1/8,\infty)$, and
\begin{equation*}
G_n(x)\le G_n(1/8)\le n\log(32)+9/16.
\end{equation*}
If $G_n'(1/8)>0$,
since $G_n'$ is strictly decreasing and tends to $-\infty$, there
exists a unique point $x_n^\ast>1/8$ such that
$G_n'(x_n^\ast)=0$.
In this case, we have $G_n(x)\le G_n(x_n^\ast)$.
Since $ G'_n$ is strictly decreasing,
we have that $x_n^\ast$ is unique.
This proves (i).

\medskip
\noindent
\textbf{\emph{Proof of (ii).}}
We have proved that for $x\ge 1/8$, there is a unique $x_n^\ast$, such that $G_n(x)\le G_n(x_n^\ast)$.
For $-1/8\le x \le 1/8$,
we have
\begin{equation*}
G_n(x)=n\log(x^2+1/64)+9x/2-\pi\cos(1/4)e^{2x}
\le
n\log(1/32)+\frac{9}{16}=\frac{9}{16}-n\log 32.
\end{equation*}
It follows from \eqref{eq:proof54-basic-bound} that
for every one of the forty disks,
$D_s$, we have
\begin{align*}
|K_{16}(x+iy)(x+iy)^{2n}|
\le
64\pi^2\exp\left(\max\left(
9/16-n\log32,\, G_n(x_n^\ast)\right)\right)
=M_n.
\end{align*}
Applying Lemma~\ref{lem:finite_gaussian_rule} with $f(t)=K_{16}(t)t^{2n}$, $\rho=1/8$, and $c=c_s=(2s+1)/16$,
we obtain
\begin{equation*}
\left|
\int_{2s/16}^{(2s+2)/16}
K_{16}(t)t^{2n}\,dt
-
\frac{1}{16}\sum_{j=1}^{256}
\omega_j \left(\frac{2s+1+z_j}{16}\right)^{2n}
K_{16}\left(\frac{2s+1+z_j}{16}\right)
\right|
\le
\frac{M_n4^{-256}}{2}.
\end{equation*}
Summing this estimate over $s=0,\ldots,39$, and using the triangle
inequality, yields
\begin{equation*}
\left|
\int_0^5K_{16}(t)t^{2n}\,dt-Q_n
\right|
\le
40\cdot4\rho M_n4^{-256}=20M_n4^{-256}.
\end{equation*}
Hence
\begin{equation}\label{eq:proof54-truncated-moment}
Q_n-20M_n4^{-256}
\le
\int_0^5K_{16}(t)t^{2n}\,dt
\le
Q_n+20M_n4^{-256}.
\end{equation}

By~\eqref{eq:K-K16} of
Lemma~\ref{lem:finite_kernel_bounds}, for every real $t\ge0$, we have
\begin{equation*}
K_{16}(t)\le K(t)
\le\frac{K_{16}(t)}{1-10^{-300}}.
\end{equation*}
Since $t^{2n}\ge0$,
integration over $[0,5]$ gives
\begin{equation}\label{eq:proof54-kernel-comparison}
\int_0^5K_{16}(t)t^{2n}\,dt
\le
\int_0^5K(t)t^{2n}\,dt
\le
\frac{1}{1-10^{-300}}
\int_0^5K_{16}(t)t^{2n}\,dt.
\end{equation}

On the one hand, the positivity~\eqref{positive} of $K(t)$ gives
\begin{equation*}
\int_0^\infty K(t)t^{2n}\,dt
\ge
\int_0^5K(t)t^{2n}\,dt.
\end{equation*}
Then by \eqref{eq:proof54-truncated-moment}, \eqref{eq:proof54-kernel-comparison} and Lemma~\ref{lem:theta_moments_and_interpolation}, we have
\begin{equation}\label{la>}
\lambda_n
=
\frac1{(2n)!}
\int_0^\infty K(t)t^{2n}\,dt
\ge
\frac{Q_n-20M_n4^{-256}}{(2n)!}.
\end{equation}

On the other hand, since
\begin{equation*}
\int_0^\infty K(t)t^{2n}\,dt
=
\int_0^5K(t)t^{2n}\,dt
+
\int_5^\infty K(t)t^{2n}\,dt.
\end{equation*}
By~\eqref{eq:moment_K},
\eqref{eq:proof54-truncated-moment}  and~\eqref{eq:proof54-kernel-comparison}, we have
\begin{align*}
\int_0^\infty K(t)t^{2n}\,dt
&\le
\frac{Q_n+20M_n4^{-256}}{1-10^{-300}}+\frac{64\pi^2\,5^{2n}\exp(45/2-\pi e^{10})}{\pi e^{10}}.
\end{align*}
Dividing by $(2n)!$, we obtain
\begin{equation}\label{la<}
\lambda_n
\le
\frac{1}{(2n)!}\cdot
\left(
\frac{Q_n+20M_n4^{-256}}{1-10^{-300}}
+
\frac{64\pi^2\,5^{2n}\exp(45/2-\pi e^{10})}{\pi e^{10}}
\right).
\end{equation}

Combining~\eqref{la>} and~\eqref{la<}, we have
\begin{equation*}
\frac{Q_n-20M_n4^{-256}}{(2n)!}
\le
\lambda_n
\le
\frac{1}{(2n)!}\cdot
\left(
\frac{Q_n+20M_n4^{-256}}{1-10^{-300}}
+
\frac{64\pi^2\,5^{2n}\exp(45/2-\pi e^{10})}{\pi e^{10}}
\right).
\end{equation*}
This proves~\eqref{eq:bound_la_n} and completes the proof.
\end{proof}

In the following, we establish the positivity of $A_{m,n}$ and strong-ratio bounds for $r_{14,n}$ for all $1\le n \le 12000$. 
We use interval arithmetic together with the analytic estimates established above.
This approach has also been used by Brent, Platt, and Trudgian in
their work on the prime number theorem and sums over zeros of the Riemann zeta function~\cite{BPT21, BPT22}.

First, we recall the basic rules of interval arithmetic;
see \cite[Sections~2--4]{MKC09}.
For real intervals $X=[a,b]$ and $Y=[c,d]$,
the basic arithmetic operations are defined by
\begin{equation*}
X\circ Y:=\{x\circ y:x\in X,\ y\in Y\},
\quad \circ\in\{+,-,\cdot,/\},
\end{equation*}
where $0\notin Y$ is assumed for division.
The resulting set is an interval whose endpoints are given by
\begin{equation*}
X\circ Y=
\left[
\min\{a\circ c,a\circ d,b\circ c,b\circ d\},
\max\{a\circ c,a\circ d,b\circ c,b\circ d\}
\right].
\end{equation*}
In particular,
\begin{equation*}
X+Y=[a+c,b+d],
\quad
X-Y=[a-d,b-c].
\end{equation*}

The fundamental \emph{inclusion property} is
\begin{equation*}
x\in X,\quad y\in Y
\quad\Longrightarrow\quad
x\circ y\in X\circ Y.
\end{equation*}
The \emph{outward-rounded interval arithmetic} is an interval arithmetic in which, at every operation, lower endpoints are rounded
toward negative infinity and upper endpoints toward positive infinity.

\begin{proposition}\label{prop:finite_escape_certificate}
Let $(\lambda_n)_{n\ge0}$ be the sequence defined in~\eqref{eq:F(x)}, and let
$
A_{m,n}=(\mathcal L^m\lambda)_n.
$ 
Then
\begin{equation}\label{eq:Amn>0}
A_{m,n}>0\quad \text{for}\ 0\leq m\leq14,\,0\leq n\leq12016-m,
\end{equation}
and
\begin{equation}\label{eq:bound_r_14}
r_{14,n}=\frac{A_{14,n}^2}{A_{14,n-1}A_{14,n+1}}>10>
\frac{3+\sqrt5}{2}\quad \text{for}\ 1\leq n\leq12000.
\end{equation}
\end{proposition}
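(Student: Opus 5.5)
This is a computer-assisted verification in outward-rounded interval arithmetic, built on Lemma~\ref{lem:finite_moment_enclosures}.

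\textbf{Enclosing the $\lambda_n$.} For each $0\le n\le 12016$ we evaluate $Q_n$ from~\eqref{eq:Qn} in interval arithmetic. This requires:
\begin{itemize}
\item certified enclosures of the $256$ Gauss--Legendre nodes $z_j$ and weights $\omega_j$, e.g.\ by an interval Newton iteration on $P_{256}$ with derivatives evaluated through the three-term recurrence;
\item interval evaluations of $K_{16}$ at the $40\cdot 256$ points $(2s+1+z_j)/16$;
\item the powers $((2s+1+z_j)/16)^{2n}$.
\end{itemize}
Next we compute $x_n^\ast$, or just an upper bound for $G_n(x_n^\ast)$. One way is bisection on the sign of $G_n'$; a rigorous alternative is to bound $G_n$ on a certified bracket around $x_n^\ast$ using concavity. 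This gives an upper enclosure of $M_n$. Then~\eqref{eq:bound_la_n} yields a certified interval $[\lambda_n^-,\lambda_n^+]$ with $\lambda_n^->0$.

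Since $\lambda_n$ is extremely small for large $n$, we never form $(2n)!$ or $\lambda_n$ directly. We work with $\log\lambda_n$, or with $\lambda_n$ rescaled by a convenient factor such as $c^n$. The operator $\mathcal L$ is homogeneous of degree two and scaling-equivariant: $\mathcal L(c^n a_n)=c^{2n}\mathcal L(a)_n$. So the positivity of $A_{m,n}$ and the value of $r_{m,n}$ are unaffected by such rescaling. We also use many-thousand-bit precision (e.g.\ Arb/MPFI), because relative precision is lost at every iteration.

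\textbf{Iterating $\mathcal L$.} Starting from the enclosures of $A_{0,n}$, we propagate via~\eqref{eq:receAmk}. At each step the index range shrinks by one, since $A_{m+1,n}$ needs $A_{m,n\pm1}$. Starting from $n\le 12016$, after $14$ steps we still cover $n\le 12016-m$. This is exactly the range in~\eqref{eq:Amn>0}, and it leaves room for $r_{14,n}$ with $n\le 12000$. Positivity at each stage is certified by checking that the lower endpoint of each interval for $A_{m,n}$ is positive; the inclusion property then gives~\eqref{eq:Amn>0}. Finally we check that the lower endpoint of $A_{14,n}^2/(A_{14,n-1}A_{14,n+1})$ exceeds $10$ for $1\le n\le 12000$. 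The boundary value $A_{m,0}=\lambda_0^{2^m}$ fits into the same scheme.

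\textbf{Main obstacle.} The key difficulty is cancellation in $A_{m,n}^2-A_{m,n-1}A_{m,n+1}$. For large $n$ one has $g_{m,n}\approx 2^m g_{0,n}$ with $g_{0,n}\approx 2/n$. Each subtraction therefore loses about $\log_2(n/2^m)$ bits at small $m$, and relative interval widths roughly double at every step. Direct interval iteration then needs the initial enclosures to have relative width around $2^{-14}\cdot e^{-\sum\text{losses}}$. Since $4^{-256}$ and $10^{-300}$ are tiny, about $600$ digits of quadrature accuracy are available, which should suffice.

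If naive propagation blows up anyway, I would switch to the logarithmic-ratio recurrence of Lemma~\ref{lem:ratio_identity_and_persistence}. In that form we iterate interval enclosures of $g_{m,n}$ directly, computing $h(t)=\log(1-e^{-t})$ with a well-conditioned routine such as $\log(-\mathrm{expm1}(-t))$. This avoids subtracting nearly equal huge numbers. Because $A_{m+1,n}>0\iff g_{m,n}>0$, positivity reduces to $g_{m,n}>0$, and $r_{14,n}>10$ becomes $g_{14,n}>\log 10$. The initial $g_{0,n}$ are obtained from the $\lambda_n$ enclosures with relative error about $10^{-500}$, which comfortably absorbs a $2^{14}$-fold amplification.
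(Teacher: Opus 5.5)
Your plan matches the paper's proof: enclose $\lambda_0,\dots,\lambda_{12016}$ via Lemma~\ref{lem:finite_moment_enclosures} in outward-rounded interval arithmetic, using certified Gauss--Legendre nodes and weights and a certified bracket for $x_n^\ast$ to bound $M_n$; then iterate $I_{m+1,n}=I_{m,n}^2-I_{m,n-1}I_{m,n+1}$ with the index range shrinking by one per step, and check that the lower endpoints are positive and the ratio lower endpoints exceed $10$ (using interval Newton instead of the paper's sign-change brackets of radius $10^{-100}$ makes no difference). One correction to your precision budget: $4^{-256}\approx 10^{-154}$, and the factor $M_n/Q_n$ further reduces this (the $\cos(1/4)$ and $1/64$ slack in $G_n$ alone costs about $e^{120}$ near $n=12000$), so you have fewer than $100$ digits of relative accuracy in $\lambda_n$, not $600$ --- still well above the roughly $35$ digits lost over the $14$ steps, where widths grow by about $4/(1-e^{-g_{m,n}})$ per step (your log-ratio fallback has the same worst-case amplification, so it would not help).
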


\begin{proof}
The proof is based on outward-rounded interval arithmetic,
with analytic estimates required for the computation already established in Lemmas~\ref{lem:gauss_legendre_facts}--\ref{lem:finite_moment_enclosures}.
We divide the proof into four steps.
In the first two steps, we estimate $Q_n$ in~\eqref{eq:Qn} and $M_n$ in~\eqref{eq:bound_G}.
In Step 3, we prove $A_{m,n}>0$ and in the last step, we verify $r_{14,n}\ge (3+\sqrt 5)/2$.
Further computational details and reproducibility information are provided in~\ref{app:source}.

\medskip
\noindent
\textbf{\emph{Step 1: Estimate of $Q_n$.}}
Recall that
\begin{equation*}
Q_n=\frac{1}{16}\sum_{s=0}^{39}\sum_{j=1}^{256}
\omega_j\cdot \left(\frac{2s+1+z_j}{16}\right)^{2n} \cdot K_{16}\left(\frac{2s+1+z_j}{16}\right),
\end{equation*}
where $z_j$ are the zeros of the Legendre polynomial $P_{256}$
and
\begin{equation*}
\omega_j=
\frac{2}
{(1-z_j^2)P_{256}'(z_j)^2}.
\end{equation*}

We first compute approximate zeros $c_j$ of $P_{256}$ and verify that each interval $[c_j-10^{-100},c_j+10^{-100}]$ contains the exact zero $z_j$.
Substituting these intervals and certified bounds on $P_{256}'(z_j)$ into the weight formula yields a positive interval containing each $\omega_j$.
Finally, evaluating the double sum with outward-rounded interval arithmetic produces intervals for $Q_n$ for all $0 \le n \le 12016$.

\medskip
\noindent
\textbf{\emph{Step 2: Estimate of $M_n$.}}
Recall that
\begin{equation*}
M_n=64\pi^2\exp\left(\max\left(
9/16-n\log32,\, G_n(x_n^\ast)\right)\right),
\end{equation*}
where $G_n(x)=n\log(x^2+1/64)+9x/2-\pi\cos(1/4) e^{2x}$.

By Lemma~\ref{lem:finite_moment_enclosures}, $G_n''(x) < 0$ for all $x \ge 1/8$, so $G_n'(x)$ is strictly decreasing. 
If $G_n'(1/8) \le 0$, then the maximum point $x_n^*$ of $G_n(x)$ is $1/8$.
If $G_n'(1/8) > 0$, then there exists an interval $[\ell_n, u_n]$, such that $x_n^* \in [\ell_n, u_n]$.
In either case, we can obtain an upper bound for $M_n$ for each $0 \le n \le 12016$.

\medskip
\noindent
\textbf{\emph{Step 3: Verification of $A_{m,n}>0$.}}
Recall that by~\eqref{eq:bound_la_n}, we have
\begin{equation*}
\frac{Q_n-20M_n4^{-256}}{(2n)!}
\le \lambda_n \le
\frac{1}{(2n)!}\cdot
\left(
\frac{Q_n+20M_n4^{-256}}{1-10^{-300}}
+
\frac{64\pi^2\,5^{2n}\exp(45/2-\pi e^{10})}{\pi e^{10}}
\right).
\end{equation*}
Using the resulting intervals for $Q_n$ together with the outward upper bounds for $M_n$, this inequality yields a closed interval $I_{0,n}$ containing $\lambda_n = A_{0,n}$.

Starting from $I_{0,n}$, let
\begin{equation}\label{eq:rece:I}
I_{m+1,0}=I_{m,0}^2, \quad \text{and} \quad I_{m+1,n}=I_{m,n}^2-I_{m,n-1}I_{m,n+1}, \quad 1\le n\le12015-m.
\end{equation}
We prove inductively that $A_{m,n}\in I_{m,n}$ for all generated terms.
This holds for $m = 0$ by the definition of $I_{0,n}$ above.
If it holds at step $m$, then by the inclusion property of interval arithmetic,
\begin{equation*}
A_{m+1,0}=A_{m,0}^2\in I_{m,0}^2=I_{m+1,0},
\end{equation*}
and for $1\le n\le 12015-m$,
\begin{equation*}
A_{m+1,n}
=
A_{m,n}^2-A_{m,n-1}A_{m,n+1}
\in
I_{m,n}^2-I_{m,n-1}I_{m,n+1}
=
I_{m+1,n}.
\end{equation*}
Thus the inclusion holds at every step.

Finally, we obtain that every such interval $I_{m,n}$ has a strictly positive lower endpoint, which gives
\begin{equation*}
A_{m,n}>0 \quad \text{for all } 0\le m\le14 \text{ and } 0\le n\le12016-m.
\end{equation*}

\medskip
\noindent
\textbf{\emph{Step 4: Verification of $r_{14,n}>10$.}}
At step $m=14$, using outward-rounded interval arithmetic, we have that the lower endpoints of the intervals
\begin{equation}\label{eq:ratio_I14}
\frac{I_{14,n}^2}{I_{14,n-1}I_{14,n+1}}
\end{equation}
strictly exceed the integer $10$ for all $1\le n\le 12000$.
Since $A_{14,n} \in I_{14,n}$, this establishes
\begin{equation*}
r_{14,n} = \frac{A_{14,n}^2}{A_{14,n-1}A_{14,n+1}} > 10 > \frac{3+\sqrt{5}}{2} \quad \text{for all } 1\le n\le 12000,
\end{equation*}
which completes the proof of~\eqref{eq:bound_r_14}.
\end{proof}

%=================================================
\section{Proof of the main theorem}\label{sec:closure}
%=================================================
In this section, we combine the results of Sections~\ref{sec:analytic} and~\ref{sec:finite} to complete the proof of Theorem~\ref{thm:main}.
We begin with a general lemma, which shows that under the assumed initial conditions, the operator $\mathcal{L}$ preserves strict positivity and the strong-ratio property.

\begin{lemma}\label{lem:variable_escape_global_closure}
Let $\alpha=(a_n)_{n\ge0}$ be a strictly positive sequence and let $A_{m,n}=(\mathcal L^m\alpha)_n$.
For every integer $n\ge 1$,
suppose that there exists a finite integer
$\tau(n)\ge0$ such that
\begin{itemize}
\item[\rm(i)] $A_{m,n}>0$ for $0\le m\le \tau(n);$
\item[\rm(ii)] the ratio $
r_{\tau(n),n}={A_{\tau(n),n}^2}/
({A_{\tau(n),n-1}A_{\tau(n),n+1}})
$, as defined in~\eqref{eq:rmk&gmk}, 
is well-defined and satisfies
\begin{equation}\label{eq:intr>R}
r_{\tau(n),n}\ge \frac{3+\sqrt5}{2}.
\end{equation}
\end{itemize}
Then, for every integer $n\ge1$ and every $m\ge\tau(n)$,
the ratio $r_{m,n}$ is well-defined and
\begin{equation*}
A_{m,n}>0
\quad\text{and}\quad
r_{m,n}\ge\frac{3+\sqrt5}{2}.
\end{equation*}
\end{lemma}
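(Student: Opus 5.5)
The plan is a single induction on $m$, carried out simultaneously for all indices $n$. Write $R=(3+\sqrt5)/2$, and note that $R$ is the larger root of $r^2-3r+1=0$. Hence
\[
(r-1)^2\ge r \quad\text{for every } r\ge R .
\]
The argument rests on the identity
\begin{equation*}
r_{m+1,n}=\frac{A_{m+1,n}^2}{A_{m+1,n-1}A_{m+1,n+1}}
=\frac{A_{m,n}^4\,(1-1/r_{m,n})^2}{A_{m,n-1}^2A_{m,n+1}^2\,\theta_{m,n-1}\theta_{m,n+1}}
=\frac{(r_{m,n}-1)^2}{\theta_{m,n-1}\theta_{m,n+1}},
\end{equation*}
where $\theta_{m,k}:=A_{m+1,k}/A_{m,k}^2$. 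This is the same computation as in the proof of Lemma~\ref{lem:local_ratio_identification}, written with $\theta$ in place of $1-e^{-g}$.

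\textbf{Step 1: positivity of all entries.} I will show $A_{m,k}>0$ for all $m\ge0$ and $k\ge0$, by induction on $m$, in the form: $A_{m,k}>0$ for all $k$, and $r_{m,k}\ge R$ whenever $\tau(k)\le m$.

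For $k=0$ we have $A_{m,0}=a_0^{2^m}>0$. For $k\ge1$ there are two cases.
\begin{itemize}
\item If $m\le\tau(k)$, positivity is hypothesis (i).
\item If $m>\tau(k)$, then $\tau(k)\le m-1$, so the induction hypothesis gives $A_{m-1,k-1},A_{m-1,k},A_{m-1,k+1}>0$ and $r_{m-1,k}\ge R>1$. Therefore
\[
A_{m,k}=A_{m-1,k}^2\bigl(1-1/r_{m-1,k}\bigr)>0 .
\]
\end{itemize}
Consequently every ratio $r_{m,k}$ with $k\ge1$ is well defined.

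\textbf{Step 2: the ratio bound.} By Step 1, $\theta_{m,k}>0$ for all $k$, and $\theta_{m,k}\le 1$. Indeed, $\theta_{m,0}=1$, and for $k\ge1$,
\[
\theta_{m,k}=1-\frac{A_{m,k-1}A_{m,k+1}}{A_{m,k}^2}<1
\]
because that subtracted product is positive. So the denominator $\theta_{m,n-1}\theta_{m,n+1}$ lies in $(0,1]$, and the identity above gives
\[
r_{m+1,n}\ge (r_{m,n}-1)^2 .
\]
For $m=\tau(n)$, hypothesis (ii) gives $r_{\tau(n),n}\ge R$. If $r_{m,n}\ge R$, then $r_{m+1,n}\ge(r_{m,n}-1)^2\ge r_{m,n}\ge R$. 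Induction on $m\ge\tau(n)$ then closes the argument, and it supplies exactly the ratio part of the inductive statement used in Step 1.

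The only delicate point is Step 1. The index $\tau$ varies with $n$, so the neighbours $n\pm1$ may still be in their "pre-escape" range $m\le\tau(n\pm1)$. There no ratio information is available, and positivity must come directly from hypothesis (i). This is why the induction has to be organised on $m$ for all $k$ simultaneously, rather than separately for each fixed $n$. Step 2 needs no bound on the neighbours' ratios at all, only the trivial fact $0<\theta\le1$.
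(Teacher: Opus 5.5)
Your proof is correct and follows essentially the same route as the paper: a joint induction on $m$ over all indices, where positivity comes from hypothesis (i) before $\tau(k)$ and from $r>1$ afterwards, and the ratio bound comes from $0<A_{m+1,k}\le A_{m,k}^2$, which yields $r_{m+1,n}\ge(r_{m,n}-1)^2$. The only cosmetic difference is the last step: you close with $(r-1)^2\ge r$ for $r\ge\frac{3+\sqrt5}{2}$, whereas the paper uses monotonicity of $(r-1)^2$ together with $\bigl(\frac{3+\sqrt5}{2}-1\bigr)^2=\frac{3+\sqrt5}{2}$.
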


\begin{proof}
We prove simultaneously by induction on $m$ that
$
A_{m,n}>0$ for every $n\ge0$,
and that
$r_{m,n}\ge ({3+\sqrt5})/{2}$
for every $n\ge1$ satisfying $\tau(n)\le m$.
In particular, all ratios appearing in the second assertion
are well-defined.

For $m=0$,  it is trivial that $A_{0,n}=a_n>0$. For every $n\ge1$ with $\tau(n)=0$,
the second assertion follows from condition~{\rm(ii)}.

Suppose that both assertions hold at some $m\ge0$.
We first establish positivity at level $m+1$.
For $n=0$, the boundary relation gives
$A_{m+1,0}=A_{m,0}^{2}>0$.
Now let $n\ge1$.
If $m<\tau(n)$, then $m+1\le\tau(n)$, and
condition~{\rm(i)} gives $A_{m+1,n}>0$.
If $m\ge\tau(n)$, then the induction hypothesis gives
$r_{m,n}\ge (3+\sqrt 5)/2>1$, and hence
\begin{equation*}
A_{m+1,n}
=
A_{m,n}^{2}-A_{m,n-1}A_{m,n+1}
=
A_{m,n}^{2}
\left(1-\frac{1}{r_{m,n}}\right)>0.
\end{equation*}
Thus $A_{m+1,n}>0$ for every $n\ge0$.

Since all entries at levels $m$ and $m+1$ are positive,
we have 
$0<A_{m+1,k}\le A_{m,k}^{2}$
for every $k\ge0$.
We now consider $r_{m+1,n}$.
Let $n\ge1$ satisfy $\tau(n)\le m+1$.
If $\tau(n)=m+1$, then condition~{\rm(ii)} gives
$r_{m+1,n}\ge (3+\sqrt 5)/2$.
Otherwise, $\tau(n)\le m$, and therefore
\begin{align*}
r_{m+1,n}&=\frac{A_{m+1,n}^2}{A_{m+1,n-1}A_{m+1,n+1}}\ge
\frac{A_{m,n}^4 (1-1/r_{m,n})^2}{A_{m,n-1}^2A_{m,n+1}^2}\\
&=
r_{m,n}^2\left(1-\frac{1}{r_{m,n}}\right)^2
=
(r_{m,n}-1)^2
\ge \left(\frac{3+\sqrt5}{2}-1\right)^2=\frac{3+\sqrt5}{2}.
\end{align*}
This completes the proof.
\end{proof}

\begin{proof}[Proof of Theorem~\ref{thm:main}]
By Lemma~\ref{lem:theta_moments_and_interpolation}, we have 
$\lambda_n>0$ for any $n\ge0$.
For $n\ge1$, set
\begin{equation*}
\tau(n)=
\begin{cases}
14, & 1\le n<12000,\\
\lceil\log_2 n\rceil, & n\ge12000.
\end{cases}
\end{equation*}
If $1\le n<12000$, Proposition~\ref{prop:finite_escape_certificate} gives
\begin{equation*}
A_{m,n}>0\quad \text{for}\ 0\le m\le\tau(n),
\quad
\text{and}\
r_{\tau(n),n}>10>\frac{3+\sqrt5}{2}.
\end{equation*}
If $n\ge12000$, Proposition~\ref{prop:large-index} yields
\begin{equation*}
A_{m,n}>0\quad \text{for}\ 0\le m\le\tau(n),
\quad
\text{and} \
r_{\tau(n),n}> e>\frac{3+\sqrt5}{2}.
\end{equation*}
Hence the hypotheses of Lemma~\ref{lem:variable_escape_global_closure}
are satisfied. Therefore
\begin{equation*}
A_{m,n}>0
\quad \text{for}\ m,n\ge0.
\end{equation*}
This completes the proof.
\end{proof}

%=================================================
\section*{Declaration of AI usage}
%=================================================
During the preparation of this manuscript, the authors used generative AI tools to assist in exploring possible approaches, checking technical details, and improving the exposition. The authors verified the mathematical arguments, computations, proofs, and results and take full responsibility for the content of this manuscript.

%=================================================

%=================================================
\appendix
\section{Computer-assisted verification and reproducibility}\label{app:source}
%=================================================
This appendix records the computational details underlying Proposition~\ref{prop:finite_escape_certificate}, including the software environment, reproduction command, and implementation used to certify the required inequalities. 
It is included to make the computer-assisted part of the proof fully reproducible.
%=================================================
\addtocontents{toc}{\protect\setcounter{tocdepth}{0}}
\subsection{Computational environment and reproduction}
%=================================================
The finite verification was carried out using Python 3.12.14,
\path|mpmath| 1.3.0, and \path|gmpy2| 2.3.1.  From the supplementary
directory, the computation can be reproduced by running
\begin{lstlisting}
python scripts/xi_finite_certificate.py --nmax 12016 --q 256 --dps 130 --workers 8 --steps 14 --certify-to 12000 --tag current_review
\end{lstlisting}
Python assertions must be enabled; in particular, the options
\texttt{-O}, \texttt{-OO}, and \texttt{PYTHONOPTIMIZE} should not be used.

The computation certifies all $256$ Gauss--Legendre node brackets,
the enclosing intervals for
$\lambda_0,\ldots,\lambda_{12016}$, the positivity of the iterates
through step $14$, and the $12{,}000$ ratio inequalities required in
Proposition~\ref{prop:finite_escape_certificate}.  All interval computations
are performed with outward rounding as described below.
%=================================================
\addtocontents{toc}{\protect\setcounter{tocdepth}{0}}
\subsection{Structure of the computer-assisted proof of Proposition~\ref{prop:finite_escape_certificate}}
%=================================================
In this part, we describe the interval arithmetic verification of the positivity of $A_{m,n}$ and the ratio bounds $r_{14,n} > 10$ in the proof of Proposition~\ref{prop:finite_escape_certificate}.

\medskip
\noindent
\textbf{\emph{Step 1: Estimate of  $Q_n$.}}
The program first computes approximate zeros of $P_{256}$, which are
used only to choose candidate root brackets.  
For each approximate zero $c_j$ ($1\le j\le 256$), the program considers the interval $[c_j-10^{-100},\,c_j+10^{-100}]$.
Using the three-term recurrence~\cite[p. 165]{SLL09}
\begin{equation*}
P_0(z)=1,\quad
P_1(z)=z,\quad
P_{d+1}(z)
=
\frac{(2d+1)zP_d(z)-dP_{d-1}(z)}{d+1},
\end{equation*}
combined with outward-rounded interval arithmetic, the program verifies that $P_{256}$ takes opposite signs at the endpoints of $256$ pairwise disjoint intervals.
Hence every such interval contains a zero of $P_{256}$.
Since $P_{256}$ has degree $256$, each interval contains exactly one
zero. 
Denote this zero by $z_j$.  
Thus $|z_j-c_j|\le 10^{-100}$.

For the corresponding weights, we use the standard Legendre polynomial identity~\cite[p. 165]{SLL09}
\begin{equation*}
P_d'(z)
=
d\frac{zP_d(z)-P_{d-1}(z)}{z^2-1},
\end{equation*}
to evaluate $P_{256}'(c_j)$.
By the mean value theorem and Lemma~\ref{lem:gauss_legendre_facts}, we have
\begin{equation*}
|P_{256}'(z_j)-P_{256}'(c_j)|
\le
\sup_{[-1,1]}|P_{256}''|\cdot|z_j-c_j|
\le
256^4\cdot10^{-100}.
\end{equation*}

The implementation uses the slightly larger radius $2\cdot256^4\cdot10^{-100}$,
where the factor $2$ also absorbs the small outward-conversion width of the decimal midpoint.
Hence the interval
\begin{equation*}
P_{256}'(c_j)
+
[-2\cdot256^4\cdot10^{-100},\,2\cdot256^4\cdot10^{-100}]
\end{equation*}
contains the exact value $P_{256}'(z_j)$.

The program checks that this derivative interval does not contain zero.
Substituting the certified intervals for $z_j$ and $P_{256}'(z_j)$ into
\begin{equation*}
\omega_j=
\frac{2}{(1-z_j^2)P_{256}'(z_j)^2}
\end{equation*}
yields a certified positive interval containing the exact weight $\omega_j$.

The program then evaluates the sums
\begin{equation*}
Q_n=\frac{1}{16}\sum_{s=0}^{39}\sum_{j=1}^{256}
\omega_j\cdot \left(\frac{2s+1+z_j}{16}\right)^{2n} \cdot K_{16}\left(\frac{2s+1+z_j}{16}\right) \quad
\text{for}\ 0\leq n\leq 12016,
\end{equation*}
by using $130$ decimal digits and directed rounding.
At each node, the factors are positive, so the powers $t^{2n}$ are accumulated by $\operatorname{term}_{n+1}=\operatorname{term}_n\,t^2$
with the lower endpoint rounded downward and the upper endpoint rounded upward.  
The segment sums and the final sums are combined in the same way.

\medskip
\noindent
\textbf{\emph{Step 2: Estimate of $M_n$.}}
By~\eqref{eq:bound_G}, we have
\begin{equation*}
M_n=64\pi^2\exp\left(\max\left(
9/16-n\log32,\, G_n(x_n^\ast)\right)\right).
\end{equation*}
For the function
\begin{equation*}
G_n(x)
=
n\log(x^2+1/64)+\frac{9x}{2}-\pi\cos(1/4) e^{2x},
\end{equation*}
Lemma~\ref{lem:finite_moment_enclosures} shows that
$G_n''(x)<0$ for any $x\ge 1/8$.
Hence $G_n'(x)$ is strictly decreasing.

For each $0\le n\le12016$, the program first evaluates $G_n'(1/8)$.
If $G_n'(1/8)\le0$, then $G_n$ is decreasing on $[1/8,\infty)$, and its maximum occurs at $x=1/8$.
Otherwise, an approximate zero of $G_n'$ is used to select a candidate interval $[\ell_n,u_n]\subset [1/8,\infty)$.
The program then certifies, by outward interval arithmetic, that
\begin{equation*}
G_n'(\ell_n)>0 \quad \text{and} \quad G_n'(u_n)<0.
\end{equation*}
Since $G_n'$ is strictly decreasing, the unique maximum point $x_n^\ast$ of $G_n$ must lie in $[\ell_n,u_n]$, so that $G_n(x)\le G_n(x_n^\ast)$.
In either case, the program evaluates an outward upper bound for $M_n$ via~\eqref{eq:bound_G}.

\medskip
\noindent
\textbf{\emph{Step 3: Verification of $A_{m,n}>0$.}}
Using the resulting interval for $Q_n$, together with the outward upper bound of $M_n$, \eqref{eq:bound_la_n} gives a closed interval $I_{0,n}$ containing $\lambda_n$:
\begin{equation*}
\frac{Q_n-20M_n4^{-256}}{(2n)!}\leq\lambda_n
\leq
\frac{1}{(2n)!}\cdot
\left(
\frac{Q_n+20M_n4^{-256}}{1-10^{-300}}
+
\frac{64\pi^2\,5^{2n}\exp(45/2-\pi e^{10})}{\pi e^{10}}
\right).
\end{equation*}
The reciprocal factorial is evaluated recursively from
\begin{equation*}
d_0=1,\quad
d_{n+1}
=
\frac{d_n}{(2n+1)(2n+2)}.
\end{equation*}
The program checks that every lower endpoint of $I_{0,n}$ is positive.
Hence $\lambda_n>0$ for any $0\le n\le12016$.
Starting from the initial intervals $I_{0,n}$, the program computes for each $0\le m\le14$:
\begin{equation*}
I_{m+1,0}=I_{m,0}^2, \quad \text{and} \quad I_{m+1,n}=I_{m,n}^2-I_{m,n-1}I_{m,n+1}, \quad 1\le n\le12015-m.
\end{equation*}
All operations are carried out with outward rounding.

Since evaluating $A_{m+1,n}$ requires the term $A_{m,n+1}$, the available range of $n$ decreases by one at each iteration.
Thus, at step $m$, we only calculate $A_{m,n}$ up to $n = 12016-m$, without assuming that any values outside this range are zero.
The program checks that every interval $I_{m,n}$ has a positive lower
endpoint for
\begin{equation*}
0\le m\le14,
\quad
0\le n\le12016-m.
\end{equation*}
Since $A_{m,n}\in I_{m,n}$, this proves
\begin{equation*}
A_{m,n}>0
\quad \text{for}\ 0\le m\le14,\, 0\le n\le12016-m,
\end{equation*}
which is~\eqref{eq:Amn>0}.

\medskip
\noindent
\textbf{\emph{Step 4: Verification of $r_{14,n}>10$.}}
At step $m = 14$, the available intervals $I_{14,n}$ reach index $n = 12016 - 14 = 12002$, ensuring that the three positive intervals $I_{14,n-1}, I_{14,n}, I_{14,n+1}$ are available for all $1 \le n \le 12000$.
The program evaluates the ratio intervals
\begin{equation*}
\frac{I_{14,n}^2}{I_{14,n-1}I_{14,n+1}}
\end{equation*}
using outward-rounded interval arithmetic, and verifies that subtracting the exact integer $10$ yields an interval with a strictly positive lower bound.
In particular,
the smallest lower endpoint occurs at $n = 12000$, where the ratio interval is approximately given by
\begin{equation*}
[10.9042985947999313085054,\, 10.9042985947999313085055].
\end{equation*}
We emphasize that the subtraction and positivity check are performed on the exact binary interval representations rather than on these displayed decimal values.
Since $A_{14,n} \in I_{14,n}$ for all generated terms, we have
\begin{equation*}
r_{14,n} = \frac{A_{14,n}^2}{A_{14,n-1}A_{14,n+1}} > 10 > \frac{3+\sqrt{5}}{2} \quad \text{for all } 1 \le n \le 12000,
\end{equation*}
which completes the proof of~\eqref{eq:bound_r_14}.

The following table records the correspondence between the finite
verification in Proposition~\ref{prop:finite_escape_certificate} and the
functions used in the implementation.

\begin{center}
\small
\renewcommand{\arraystretch}{1.15}
\begin{tabular}{@{}
>{\raggedright\arraybackslash}p{0.23\textwidth}
>{\raggedright\arraybackslash}p{0.25\textwidth}
>{\raggedright\arraybackslash}p{0.42\textwidth}@{}}
\toprule
\textbf{Step in the proof} & \textbf{Functions} & \textbf{Certified result} \\
\midrule

Legendre nodes and weights
&
\texttt{legendre\_pair}, \texttt{certified\_nodes}
&
Disjoint root intervals, opposite endpoint signs, and positive weights.
\\ \addlinespace[6pt]

Bounds for $K$ and $K_{16}$
&
\texttt{kernel\_truncated}, \texttt{audit\_constants}
&
Certified bounds for the error $|K-K_{16}|$.
\\ \addlinespace[6pt]

Moment sums~\eqref{eq:Qn}
&
\texttt{segment\_moments}
&
Positive interval bounds for $Q_n$.
\\ \addlinespace[6pt]

Error bounds $M_n$ in~\eqref{eq:bound_G}
&
\texttt{phase\_error\_bounds}
&
Intervals $[l_n,u_n]$ for $x_n^*$ and upper bounds for
$M_n$ in~\eqref{eq:bound_G}.
\\ \addlinespace[6pt]

Taylor coefficients $\lambda_n$ in~\eqref{eq:bound_la_n}
&
\texttt{enclose\_coefficients}
&
Positive intervals containing $\lambda_0,\ldots,\lambda_{12016}$.
\\ \addlinespace[6pt]

Iterates and ratios in~\eqref{eq:rece:I} and~\eqref{eq:ratio_I14}
&
\texttt{check\_iterates}, \texttt{main}
&
Positivity of $A_{m,n}$ through step $14$, and ratio bounds
$r_{14,n}>10$ in~\eqref{eq:bound_r_14}.
\\
\bottomrule
\end{tabular}
\end{center}

%=================================================
\addtocontents{toc}{\protect\setcounter{tocdepth}{0}}
\subsection{Certified data and interval arithmetic}
%=================================================
The supplementary files contain six exact JSON records for the node
brackets and weights, the interval enclosures of the sums $Q_n$, the
bounds in~\eqref{eq:bound_la_n}, the coefficient intervals, the final ratio intervals, and the iteration summaries.
They also record the software versions, the source hash, and the parameters used in the computation.
An endpoint tuple
\path|[sign,mantissa,exponent,bitcount]| represents the exact binary value
\begin{equation*}
(-1)^{\mathrm{sign}}\mathrm{mantissa}\,2^{\mathrm{exponent}}.
\end{equation*}
The mantissas are arbitrary-precision integers.  Thus the stored interval
endpoints are the exact binary enclosures produced by the computation,
rather than decimal approximations reconstructed from printed output.
The displayed decimal values are obtained from these endpoints by outward
rounding using exact integer division.

The interval computations use \path|mpmath| 1.3.0, with
\path|gmpy2| 2.3.1 as the integer backend.  The elementary interval
operations used in the computation include exponential, logarithm, cosine,
pi, and square root.  The positive-arithmetic routines specify
\path|round_floor| and \path|round_ceiling| explicitly.  Approximate
\path|mp| calculations are used only to select the candidate node brackets
and the intervals $[l_n,u_n]$.  
These intervals are subsequently certified by outward interval inequalities, so no approximate computation is used as an unchecked enclosure endpoint.

The separate script
\path|scripts/check_analytic_constants.py| verifies, by outward interval
arithmetic, the $34$ scalar inequalities used in the analytic estimates.
These numerical checks are supplementary to the analytic and inductive
arguments in the proof.
%=================================================
\addtocontents{toc}{\protect\setcounter{tocdepth}{0}}
\subsection{Source code for the finite verification}
%=================================================
The source code used for the finite verification is available in the
accompanying GitHub repository at
\url{https://github.com/YXLiu97/riemann-xi-infinite-log-concavity}.
The SHA-256 hash of
\path|scripts/xi_finite_certificate.py|
used for the finite verification in this paper is
\begin{center}\small\ttfamily
\seqsplit{01816166fa0b317f0b116ca3ac45ffa198ec9348aa3c73d48f4d8a571e0211fe}.
\end{center}
This hash identifies the exact version of the program used to obtain the finite verification described above.
\end{document}